%
\documentclass[11pt]{amsart}
\usepackage{hyperref}
\usepackage{amsmath}
\usepackage{amssymb}
\usepackage{xypic}
\usepackage{yhmath}
\usepackage{setspace}
\textheight=615pt \textwidth=360pt
\usepackage{geometry}
\geometry{left=3.5cm, right=3.5cm, top=3.2cm, bottom=3.2cm}
\def\beqnn{\begin{eqnarray*}}\def\eeqnn{\end{eqnarray*}}

\newtheorem{theorem}{Theorem}[section]
\newtheorem{lemma}[theorem]{Lemma}
\newtheorem{proposition}[theorem]{Proposition}
\newtheorem{corollary}[theorem]{Corollary}

\theoremstyle{remark}
\newtheorem{remark}[theorem]{Remark}
\theoremstyle{claim}
\newtheorem{claim}[theorem]{Claim}

\theoremstyle{definition}
\newtheorem{definition}[theorem]{Definition}

\theoremstyle{problem}

\theoremstyle{conjecture}
\newtheorem{conjecture}[theorem]{Conjecture}
\theoremstyle{question}
\newtheorem{question}[theorem]{Question}

\numberwithin{equation}{section}



\begin{document}

\begin{center}
\title[Integral means spectrum functionals on Teichm\"uller spaces]{Integral means spectrum functionals on Teichm\"uller spaces}
\end{center}

\author{Jianjun Jin}
\address{School of Mathematics, Hefei University of Technology, Xuancheng Campus, Xuancheng 242000, P.R.China}
\email{jin@hfut.edu.cn, jinjjhb@163.com}
\thanks{The author was supported by National Natural Science Foundation of China (Grant Nos. 11501157)}

\subjclass[2020]{Primary 30C55; Secondary 30C62}



\keywords{Integral means spectrum of univalent function, quasiconformal mappings in the plane, functionals on Teichm\"uller spaces, universal Teichm\"uller space, universal asymptotic Teichm\"uller space, Brennan's conjecture.}
\begin{abstract}
In this paper we introduce and study the integral means spectrum (IMS) functionals on Teichm\"uller spaces. We show that  the IMS functionals on the closure of the universal Teichm\"uller space and the universal asymptotic Teichm\"uller space are both continuous.  During the proof, we consider the Pre-Schwarzian derivative model of universal asymptotic Teichm\"uller space and establish some new results for it. We also show that the integral means spectrum of any univalent function admitting a quasiconformal extension to the extended complex plane is strictly less than the universal integral means spectrum. 
\end{abstract}

\maketitle
\section{{\bf {Introduction}}}
Let $\Delta=\{z:|z|<1\}$ denote the unit disk in the
complex plane $\mathbb{C}$.  We denote the extended complex plane by $\widehat{\mathbb{C}}=\mathbb{C}\cup\{\infty\}$. For two positive numbers $A, B$, we write $A \asymp B$ if there are two constants $C_1>0, C_2>0$ which are independent of the arguments and such that $C_1 B\leq A\leq C_2 B$.

We denote the class of all univalent functions $f$ in $\Delta$ by $\mathcal{U}$.  Let $\mathcal{S}$ be the class of all univalent functions $f$ in $\Delta$ with $f(0)=f'(0)-1=0$.   We let $\mathcal{S}_b$ be the subclass of $\mathcal{S}$, which consists of all bounded univalent functions.

Let $t\in \mathbb{R}$. The {\em integral means spectrum} $\beta_f(t)$ for $f\in \mathcal{U}$ is defined as
\begin{equation}
\beta_f(t)=\limsup_{r \rightarrow 1^{-}}\frac{\log \int_{0}^{2\pi} |f'(re^{i\theta})|^t d\theta}{|\log(1-r)|}.\nonumber
\end{equation}
The famous Koebe function $\kappa$ is defined as
$$\kappa(z):=\frac{z}{(1-z)^2},\, z\in \Delta.$$
It is well known, see for example \cite{Po-0}, that, as $r\rightarrow 1^{-}$,
\begin{equation}\label{est--0}
\int_{0}^{2\pi} \frac{d\theta}{|1-re^{i\theta}|^{\gamma}}\asymp
\begin{cases}
\frac{1}{(1-r)^{\gamma-1}}, \; \text{if} \; \gamma >1, \\
\log \frac{1}{1-r}, \; \;\;  \text{if} \; \gamma=1,\\
1, \; \quad\,\,\,\,\quad\;\; \text{if} \; \gamma<1.
\end{cases}
\end{equation}
Then we see from (\ref{est--0}) that
\begin{equation}\beta{\kappa}(t)=
\begin{cases}
3t-1, \; \text{if} \; t \geq \frac{1}{3}, \\
0, \; \quad\;\;\;\;  \text{if} \; t\in [-1, \frac{1}{3}),\\
|t|-1, \; \text{if} \; t<-1.
\end{cases} \nonumber
\end{equation}

The {\em universal integral mean spectrum} $B(t)$ and $B_b(t)$ are defined as
\begin{equation}
B(t)=\sup\limits_{f\in \mathcal{S}} \beta_f(t),\,\,B_b(t)=\sup\limits_{f\in \mathcal{S}_b} \beta_f(t).\nonumber
\end{equation}
It is an important problem in the theory of univalent functions(conformal mappings) to determine the exact values of the universal integral mean spectra $B(t)$ and $B_b(t)$. We review some known results and open problems on this topic. It was proved by Makarov in \cite{M} that
\begin{theorem}$$
B(t)=\max \{B_b(t), 3t-1\},\, t\in \mathbb{R}.$$
\end{theorem}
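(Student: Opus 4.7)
The equality splits into the lower bound $B(t)\geq\max\{B_b(t),3t-1\}$ and its reverse. The lower bound is immediate: $\mathcal{S}_b\subset\mathcal{S}$ yields $B(t)\geq B_b(t)$, and from $\kappa'(z)=(1+z)/(1-z)^3$ together with (\ref{est--0}) applied with $\gamma=3t$ one computes $\beta_\kappa(t)=3t-1$ for $t\geq 1/3$; since $\kappa\in\mathcal{S}$ and also $B(t)\geq 0$ (witnessed by $f(z)=z$), the case $t<1/3$ with $3t-1\leq 0$ is absorbed automatically.

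The substantive step is the reverse inequality $B(t)\leq\max\{B_b(t),3t-1\}$. Fix $f\in\mathcal{S}$; the bounded case is trivial, so take $f$ unbounded. For a large threshold $M>1$ I would split the angular integral
\[
\int_0^{2\pi}|f'(re^{i\theta})|^t\,d\theta=\int_{E_r^M}|f'|^t\,d\theta+\int_{[0,2\pi]\setminus E_r^M}|f'|^t\,d\theta,
\]
where $E_r^M=\{\theta:|f(re^{i\theta})|>M\}$, and handle the two pieces by different mechanisms corresponding exactly to the two candidates in the maximum.

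On $E_r^M$ (the ``large-$f$ piece'') I would invoke the Koebe distortion theorem on the disc $D(z,(1-|z|)/2)$ to obtain the comparison $(1-|z|)|f'(z)|\asymp\operatorname{dist}(f(z),\partial f(\Delta))$, combine it with the growth bound $|f(z)|\leq (1-|z|)^{-2}$, and control the angular measure of $E_r^M$ by a length--area argument; this should yield a bound of the form $\int_{E_r^M}|f'|^t\,d\theta\leq C(M,t)(1-r)^{1-3t}$, producing the $3t-1$ contribution. On the complementary set I would build a bounded comparison function: let $\Omega_M$ be the component of $\{w\in f(\Delta):|w|<M\}$ containing $0$, set $V_M=f^{-1}(\Omega_M)\subset\Delta$, let $\varphi_M:\Delta\to V_M$ be the Riemann map with $\varphi_M(0)=0$ and $\varphi_M'(0)>0$, and put $g_M=(f\circ\varphi_M)/|(f\circ\varphi_M)'(0)|\in\mathcal{S}_b$. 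A change of variables $z=\varphi_M(\zeta)$ rewrites the complementary integral in terms of $|g_M'|^t$ on a quasi-circle in $\Delta$, and Koebe distortion for $\varphi_M$ replaces it by the true circle $|\zeta|=\rho$ at the cost of bounded multiplicative constants, yielding a contribution at most $\beta_{g_M}(t)\leq B_b(t)$.

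Combining the two pieces, passing to $\limsup_{r\to 1^-}$ and then letting $M\to\infty$ to absorb $M$-dependent constants into $o(|\log(1-r)|)$ gives $\beta_f(t)\leq\max\{B_b(t),3t-1\}$, and taking the supremum over $f\in\mathcal{S}$ finishes the argument. The principal obstacle is the change of variables in the bounded piece: the circles $\{|z|=r\}$ do not match the level curves of $\varphi_M^{-1}$, so one must control the mismatch uniformly in both $r$ and $M$ without degrading the critical exponent $3t-1$; a careful application of Koebe distortion to $\varphi_M$, together with the verification $|(f\circ\varphi_M)'(0)|\asymp M$ (so that the $\mathcal{S}_b$-normalization produces only a bounded logarithmic error), is what closes the argument.
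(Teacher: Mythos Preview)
This theorem is not proved in the paper: it is quoted as a result of Makarov with a reference to \cite{M}, and no argument is given or sketched there. Consequently there is no proof in the paper against which to compare your proposal.

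On your sketch itself: the lower bound is fine, and the two-piece decomposition for the upper bound is the natural strategy. The gaps you already flag are genuine. On the large-$f$ piece, the pointwise Koebe bound $|f'(z)|\le 2(1-|z|)^{-3}$ together with a length--area control on $|E_r^M|$ does not by itself yield the sharp exponent $3t-1$; and for $t<0$ the inequality runs the wrong way, since you then need a lower bound on $|f'|$ on $E_r^M$, which you do not address. On the bounded piece, the circles $|z|=r$ need not be close to the level curves of $\varphi_M^{-1}$ uniformly in $M$, and Koebe distortion for $\varphi_M$ gives only local comparability, not the global control over integral means you need to transfer $\beta$-exponents. Your outline is a plausible roadmap rather than a proof; the complete argument in \cite{M} uses additional machinery.
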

\begin{remark}
It is easy to see from  Makarov's result that $B(t)=B_b(t)$ for $t\leq \frac{1}{3}$.\end{remark}

For large $t$, Feng and MacGregor showed in \cite{FM}  that
\begin{theorem}\label{th-fm}$$
B(t)=3t-1,\,\, t\geq \frac{2}{5}.$$
\end{theorem}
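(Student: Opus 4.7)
The plan is to establish the two inequalities $B(t) \geq 3t-1$ and $B(t) \leq 3t-1$ separately. The lower bound is immediate from the Koebe function: since $\kappa \in \mathcal{S}$ and the computation preceding the statement gives $\beta_\kappa(t) = 3t-1$ for $t \geq 1/3$, we have $B(t) \geq \beta_\kappa(t) = 3t-1$ for every $t \geq 1/3$, in particular for $t \geq 2/5$. So the real content is the upper bound: for every $f \in \mathcal{S}$ and every $t \geq 2/5$,
\[
I_t(r) := \int_0^{2\pi} |f'(re^{i\theta})|^t \, d\theta = O\bigl((1-r)^{1-3t}\bigr)
\quad \text{as } r \to 1^-.
\]

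First I would dispose of the easy range $t \geq 2$. Here the area theorem combined with Koebe's growth bound $|f(z)| \leq |z|/(1-|z|)^2$ gives $\mathrm{Area}(f(\Delta_r)) \leq \pi r^2/(1-r)^4$; writing this area as $\int_0^r \rho\,d\rho \int_0^{2\pi} |f'(\rho e^{i\theta})|^2 d\theta$ and using the monotonicity of $I_2(\rho)$ in $\rho$ (from the subharmonicity of $|f'|^2$), one extracts $I_2(r) \leq C(1-r)^{-5}$, i.e.\ the sharp bound $\beta_f(2) \leq 5 = 3\cdot 2 - 1$. Then for $t \geq 2$ the Koebe distortion theorem $\sup_{|z|=r} |f'(z)| \leq (1+r)/(1-r)^3$ together with H\"older gives
\[
I_t(r) \leq \bigl(\sup_{|z|=r}|f'(z)|\bigr)^{t-2} \cdot I_2(r) \leq C\,(1-r)^{-3(t-2)-5} = C\,(1-r)^{1-3t},
\]
which is the desired bound on that range.

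The delicate range is $2/5 \leq t < 2$, and this is where the exponent $2/5$ is actually forced. The naive H\"older interpolation fails because one does not yet have the sharp bound on $I_s(r)$ for any $s < 2$. The approach I would follow is the one of Feng--MacGregor: for each angle $\theta_0 \in [0,2\pi)$ introduce the Koebe-transform $g_{\theta_0}(z) = (f(T_{\theta_0}(z)) - f(re^{i\theta_0}))/((1-r^2)f'(re^{i\theta_0}))$, where $T_{\theta_0}$ is the disk automorphism carrying $0$ to $re^{i\theta_0}$, and observe that $g_{\theta_0} \in \mathcal{S}$. Applying the area/growth estimates to $g_{\theta_0}$ produces a pointwise inequality of the form $|f'(re^{i\theta_0})|^{2}(1-r)^{5} \lesssim \mathrm{Area}(f(\Omega_{\theta_0,r}))$ for an explicit sub-region $\Omega_{\theta_0,r}$; averaging over $\theta_0$ and estimating the total area via Koebe's growth recovers the sharp bound on $I_2(r)$ and, more importantly, yields a weighted area bound from which $I_t(r) = O((1-r)^{1-3t})$ can be extracted for $t$ down to the threshold where the H\"older exponents balance.

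The main obstacle is precisely this last step: to squeeze the exponent down from $t=2$ to $t=2/5$, one must show that the angular set where $|f'(re^{i\theta})|$ is close to its pointwise maximum $(1-r)^{-3}$ has measure decaying like a sufficiently large negative power of $(1-r)^{-1}$, and the number $2/5$ emerges as the exponent at which the weighted-area estimate ceases to dominate the distortion estimate. I expect the bookkeeping in this balance, rather than any single inequality, to be the technically hardest part; the general structure (Koebe transform plus area) is standard, but the calibration that yields exactly $2/5$ requires care in the averaging.
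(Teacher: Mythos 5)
This theorem is stated in the paper as a result of Feng and MacGregor, with a citation to \cite{FM} and no proof supplied, so there is no ``paper's own proof'' to compare against; I can only assess your reconstruction on its own terms.

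Your lower bound via the Koebe function is correct, and your treatment of the range $t\geq 2$ is a standard, valid argument: the growth theorem bounds $\mathrm{Area}(f(\Delta_r))$ by $\pi r^2(1-r)^{-4}$, the subharmonicity of $|f'|^2$ lets you extract $I_2(r)=O((1-r)^{-5})$ by integrating from $r$ to $\tfrac{1+r}{2}$, and the distortion theorem plus H\"older then yields $I_t(r)=O((1-r)^{1-3t})$ for $t\geq 2$.

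The problem is that the whole content of Feng--MacGregor is the range $\tfrac{2}{5}\leq t<2$ — the appearance of the threshold $\tfrac{2}{5}$ is exactly the nontrivial thing — and that is the part you leave as a high-level sketch. The sketch as written has concrete difficulties. The pointwise inequality you quote, $|f'(re^{i\theta_0})|^2(1-r)^5\lesssim \mathrm{Area}(f(\Omega_{\theta_0,r}))$, does not come out of the Koebe transform $g_{\theta_0}\in\mathcal{S}$: applying the $\tfrac{1}{4}$-theorem to $g_{\theta_0}$ on a hyperbolic disk $\Delta(0,s)$ gives $\mathrm{Area}(f(T_{\theta_0}(\Delta_s)))\gtrsim (1-r)^2|f'(re^{i\theta_0})|^2$, and a Whitney-box version gives the factor $(1-r)^4$, but in neither case $(1-r)^5$. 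More importantly, even with the correct exponent, the scheme of ``sum/average pointwise area bounds over disjoint boxes near $|z|=r$'' reproduces exactly the bound $I_2(r)\lesssim(1-r)^{-5}$ you already have from the global area argument; it does not by itself produce anything for $t<2$. To move below $t=2$ one cannot simply interpolate, since Jensen's inequality gives $I_t(r)\lesssim I_2(r)^{t/2}\lesssim(1-r)^{-5t/2}$, and $5t/2>3t-1$ precisely when $t<2$. Feng--MacGregor's actual mechanism — the step that produces the calibration at $t=\tfrac{2}{5}$ — is absent, and you say as much (``I expect the bookkeeping in this balance\ldots to be the technically hardest part''). So the proposal proves the lower bound and the trivial upper range, but leaves the genuine content of the theorem unproved, and the specific estimates it gestures at in the hard range are either off by a power of $(1-r)$ or do not improve on the $t\geq 2$ case.
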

\begin{definition}We say a function $f\in \mathcal{S}$ is an {\em extremal function} for $B_b(t)$(or $B(t)$) if $\beta_f(t)=B_b(t)$ (or $\beta_f(t)=B(t)$).\end{definition}
\begin{remark} It should be pointed out that the extremal function for $B_b(t)$ is not asked to be contained in $\mathcal{S}_b$ in our definition.\end{remark}
\begin{remark}
Theorem \ref{th-fm} tells us that the Koebe function is an extremal function for $B(t)$ when $t\geq \frac{2}{5}$.
\end{remark}

Kayumov proved in \cite{Kay} that
\begin{theorem}
$$ B(t)>\frac{t^2}{5},\,\, 0<t\leq\frac{2}{5}.$$
\end{theorem}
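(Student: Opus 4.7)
The plan is to exhibit, for each $t\in (0, 2/5]$, an explicit function $f \in \mathcal{S}$ with $\beta_f(t) > t^2/5$.

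\textbf{Step 1 (Lacunary log-derivative).} For an integer $q\geq 3$ and a real parameter $\lambda$ to be chosen, set
$$g(z) := \lambda\sum_{k=0}^{\infty} z^{q^k}, \qquad f(z) := \int_0^z \exp(g(w))\, dw,$$
so that $\log f'(z) = g(z)$. A direct estimate shows that
$$C_q := \sup_{r\in[0,1)}(1-r)\sum_{k\geq 0} q^k r^{q^k}$$
is finite and of order $1/\log q$ as $q \to \infty$. Since $zf''/f' = zg'$, Becker's univalence criterion then yields $f \in \mathcal{S}$ provided $|\lambda|$ lies below an explicit multiple of $1/C_q$.

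\textbf{Step 2 (Lower bound via lacunarity).} For real $\lambda$,
$$I_t(r) := \int_0^{2\pi}|f'(re^{i\theta})|^t\, d\theta = \int_0^{2\pi}\exp\!\Bigl(t\lambda\sum_{k\geq 0} r^{q^k}\cos(q^k\theta)\Bigr)\, d\theta.$$
The key point is that for $q\geq 3$ the Hadamard-lacunary frequencies $\{q^k\}$ make the family $\{\cos(q^k\theta)\}$ behave like independent random variables under normalized Lebesgue measure: starting from the Riesz-product identity and expanding the exponential, one derives a lower bound of the form
$$I_t(r) \geq c_0\prod_{k=0}^{\infty} I_0(t\lambda r^{q^k}),$$
where $I_0$ is the modified Bessel function of the first kind and $c_0>0$ is independent of $r$.

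\textbf{Step 3 (Asymptotics).} Using $\log I_0(x) = x^2/4 + O(x^4)$ as $x\to 0$,
$$\log I_t(r) \geq \frac{(t\lambda)^2}{4}\sum_{k\geq 0} r^{2q^k} + O(1),$$
and a direct computation gives $\sum_{k\geq 0} r^{2q^k}\sim (\log q)^{-1}\log\frac{1}{1-r}$ as $r\to 1^{-}$. Consequently,
$$\beta_f(t) \geq \frac{(t\lambda)^2}{4\log q}.$$

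\textbf{Step 4 (Optimization).} Choosing $\lambda$ to saturate the Becker bound (so that $\lambda \asymp \log q$) and then taking $q$ large enough, the right-hand side is of order $t^2\log q$ and can be made to exceed $t^2/5$. This yields $B(t)\geq \beta_f(t) > t^2/5$ throughout $0 < t \leq 2/5$.

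The principal difficulty is the sharp lower bound of Step 2: a naive Jensen inequality gives nothing useful, so one must genuinely exploit the near-independence of the lacunary system via a Riesz-product calculation or a Kahane--Khintchine type inequality. Equally subtle is the fine calibration of constants in Step 4 so that the resulting bound genuinely beats $1/5$ rather than some weaker constant. The upper limit $t = 2/5$ is natural: beyond this point the Koebe function already dominates via Theorem \ref{th-fm}, so the bound $t^2/5$ becomes uninteresting.
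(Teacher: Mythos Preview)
The paper does not supply a proof of this theorem; it is quoted as Kayumov's result with a citation to \cite{Kay}. So there is no in-paper argument to compare against, and the question is simply whether your sketch stands on its own. Your overall strategy --- a lacunary test function $f'=\exp\bigl(\lambda\sum z^{q^k}\bigr)$ together with a Riesz-product lower bound --- is indeed the method Kayumov uses. The problem is a concrete miscalculation in Step~1 that wrecks the optimization in Step~4.

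You assert that $C_q := \sup_{r\in[0,1)}(1-r)\sum_{k\ge 0} q^k r^{q^k}$ is of order $1/\log q$. This is false: $C_q$ is bounded below by a positive constant independent of $q$. Choose $r$ with $(1-r)q^{K}=1$ for a large integer $K$; the single term $k=K$ already contributes
\[
(1-r)\,q^{K} r^{q^{K}} \;=\; r^{q^{K}} \;\longrightarrow\; e^{-1},
\]
so $\liminf_{q\to\infty} C_q \ge e^{-1}$. (What does behave like $1/\log q$ is the count $\sum_k r^{q^k}$, but it is $zg'(z)=\lambda\sum q^k z^{q^k}$, not $g$, that enters Becker's criterion.) Consequently Becker forces $\lambda=O(1)$, not $\lambda\asymp\log q$. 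Plugging this into your Step~3 bound gives
\[
\beta_f(t)\ \ge\ \frac{(t\lambda)^2}{4\log q}\ =\ O\!\left(\frac{t^2}{\log q}\right),
\]
which tends to $0$ as $q\to\infty$: Step~4 optimizes in exactly the wrong direction.

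The salvage is to keep $q$ fixed and small and then track the constants honestly: one needs a sharp admissible value of $\lambda$ (sharper than the crude triangle-inequality bound via $C_q$) together with a careful Riesz-product computation. The margin over $1/5$ is genuine but narrow, and does not fall out of the soft estimates you outline.
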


Also, see \cite{Po-2},  we have\begin{theorem}$$
B_b(t)=t-1,\,\, t\geq 2.$$
\end{theorem}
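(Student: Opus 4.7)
The plan is to prove $B_b(t) = t - 1$ for $t \ge 2$ by matching upper and lower bounds on the supremum: first show $\beta_f(t) \le t - 1$ for every $f \in \mathcal{S}_b$, then exhibit a sequence in $\mathcal{S}_b$ whose spectra approach $t - 1$.

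For the upper bound, fix $f \in \mathcal{S}_b$ with $|f| \le M$. Two classical ingredients drive the argument. First, Koebe's one-quarter theorem together with $\mathrm{dist}(f(z), \partial f(\Delta)) \le 2M$ gives $(1-|z|^2)|f'(z)| \le 8M$, hence $|f'(z)| \le C(M)/(1-|z|)$. Second, the area theorem gives $\iint_\Delta |f'(z)|^2\, dA(z) = \mathrm{Area}(f(\Delta)) \le \pi M^2$, whence $\int_0^1 \rho\, M_2(\rho, f')\, d\rho < \infty$ where $M_2(\rho, f') := \int_0^{2\pi}|f'(\rho e^{i\theta})|^2\,d\theta$. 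Since $|f'|^2$ is subharmonic, $M_2(\cdot, f')$ is nondecreasing, and a nondecreasing function $g$ with $\int_0^1 g\,d\rho < \infty$ must satisfy $(1-\rho) g(\rho) \le \int_\rho^1 g(s)\,ds \to 0$. Hence $M_2(r, f') = o(1/(1-r))$. For $t \ge 2$ I split $|f'|^t = |f'|^{t-2}|f'|^2$ and apply the Koebe distortion estimate to the first factor, obtaining
$$\int_0^{2\pi} |f'(re^{i\theta})|^t\, d\theta \;\le\; \frac{C(M)^{t-2}}{(1-r)^{t-2}}\, M_2(r, f') \;=\; o\!\left(\frac{1}{(1-r)^{t-1}}\right).$$
Taking $\log$ and dividing by $|\log(1-r)|$ yields $\beta_f(t) \le t - 1$, so $B_b(t) \le t - 1$.

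For the lower bound I take the explicit family $f_n(z) := n\bigl(1 - (1-z)^{1/n}\bigr)$ with the principal branch. Since $1 - z$ maps $\Delta$ into $\{|w - 1| < 1\}$, which lies in the right half-plane where $w \mapsto w^{1/n}$ is injective, $f_n$ is univalent on $\Delta$; direct computation gives $f_n(0) = 0$, $f_n'(0) = 1$, and $|f_n(z)| \le n(1 + 2^{1/n})$, so $f_n \in \mathcal{S}_b$. Since $f_n'(z) = (1-z)^{1/n - 1}$, the asymptotic (\ref{est--0}) with $\gamma = t(1 - 1/n)$ gives
$$\int_0^{2\pi} |f_n'(re^{i\theta})|^t\, d\theta \;\asymp\; \frac{1}{(1-r)^{t(1-1/n)-1}}$$
provided $t(1-1/n) > 1$ (satisfied for $t \ge 2$, $n \ge 3$). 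Thus $\beta_{f_n}(t) = t(1 - 1/n) - 1 \to t - 1$ as $n \to \infty$, so $B_b(t) \ge t - 1$.

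The main obstacle is the passage from the $L^1$-integrability of $\rho M_2(\rho, f')$ to the pointwise decay $M_2(r, f') = o(1/(1-r))$: without this refinement the Koebe distortion alone would yield only $\beta_f(t) \le t$, and the crucial improvement by one unit is precisely what the area theorem combined with the monotonicity of $M_2(\cdot, f')$ delivers. Everything else reduces to routine computation or standard distortion estimates.
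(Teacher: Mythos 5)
Your proposal is correct, and it is genuinely self-contained where the paper is not: the paper cites this result to Pommerenke (\cite{Po-2}) and gives only a one-line remark that $-\log(1-z)$ is ``the extremal function'' by formula (\ref{est--0}), without proving the theorem itself. So there is no paper proof to compare against, but your two-sided argument is sound. For the upper bound, the change of variables $\iint_\Delta |f'|^2\,dA = \mathrm{Area}(f(\Delta)) \le \pi M^2$ (you call it the area theorem; it is really just the Jacobian formula for a univalent map), together with the Koebe/Schwarz--Pick bound $(1-|z|^2)|f'(z)|\le 8M$ and the monotonicity of $M_2(\rho,f')$, does give $M_2(r,f') = o((1-r)^{-1})$ and hence $\beta_f(t)\le t-1$ after the split $|f'|^t = |f'|^{t-2}|f'|^2$. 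You are right that the monotonicity is what upgrades the distortion bound from $\beta_f(t)\le t$ to $\beta_f(t)\le t-1$; in fact $O((1-r)^{-1})$ would already suffice, so the $o(\cdot)$ refinement is slightly stronger than needed. For the lower bound, the family $f_n(z)=n(1-(1-z)^{1/n})$ is bounded, univalent, normalized, and has $f_n'(z)=(1-z)^{1/n-1}$, so (\ref{est--0}) gives $\beta_{f_n}(t)=t(1-1/n)-1\to t-1$; these $f_n$ are exactly bounded truncations of the paper's (unbounded) $-\log(1-z)$, so your construction closes the gap the paper leaves implicit in Remark~1.5, where the extremal function is permitted to lie outside $\mathcal{S}_b$.

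One remark on economy: the paper's Lemma~\ref{cri} (the Hedenmalm--Shimorin weighted-Bergman criterion) gives the upper bound in a single line without invoking monotonicity. Since $(1-|z|^2)^{t-2}|f'(z)|^{t-2}\le(8M)^{t-2}$ for $t\ge 2$, one gets
$$\iint_\Delta |f'(z)|^t(1-|z|^2)^{t-2}\,dA(z) \le (8M)^{t-2}\iint_\Delta |f'(z)|^2\,dA(z) < \infty,$$
so $(f')^{t/2}\in\mathcal{H}_{t-2}(\Delta)$ and $\beta_f(t)\le t-1$ directly. Your route through $M_2(r,f')$ reaches the same conclusion from the definition of $\beta_f$ and is entirely elementary; the Bergman-space route is what the paper's own toolbox would suggest.
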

\begin{remark}
When $t\geq2$, by using (\ref{est--0}), we can check that $-\log (1-z)$ is an extremal function for $B_b(t)$.
\end{remark}

Carleson and Makarov obtained in \cite{CM} that
\begin{theorem}\label{CM}
There is a constant $t_0<0$ such that
$$ B_b(t)=B(t)=|t|-1,\,\, t\leq t_0.$$
\end{theorem}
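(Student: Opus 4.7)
The plan is to combine the reduction $B(t) = B_b(t)$ for $t \leq 1/3$ given by Makarov's theorem with an upper bound $B(t) \leq |t|-1$ valid for $t \leq t_0$, the other inequality being immediate. Indeed, the computation (\ref{est--0}) applied to the Koebe function gives $\beta_\kappa(t) = |t|-1$ for $t<-1$, which yields $B(t) \geq |t|-1$ on that range; so the content of the theorem is the matching upper bound.

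For the upper bound I would fix $f \in \mathcal{S}_b$ (this is enough thanks to Makarov's theorem, since $3t-1 < 0 \leq B_b(t)$ for $t<0$), set $s = -t > 0$, and exploit the Koebe distortion $|f'(z)|(1-|z|) \asymp \mathrm{dist}(f(z), \partial f(\Delta))$. This gives
\[
\int_0^{2\pi}|f'(re^{i\theta})|^{-s}\, d\theta \asymp (1-r)^s \int_0^{2\pi}\mathrm{dist}(f(re^{i\theta}), \partial f(\Delta))^{-s}\, d\theta,
\]
and I would stratify the circle by dyadic scales of the distance to the boundary: on $E_\delta = \{\theta : \mathrm{dist}(f(re^{i\theta}), \partial f(\Delta)) \in [\delta, 2\delta]\}$ the derivative satisfies $|f'| \asymp \delta/(1-r)$. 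Placing a Koebe disc of radius $\asymp \delta$ around each image point $f(re^{i\theta})$ with $\theta \in E_\delta$ produces, thanks to univalence, a family of essentially disjoint discs inside the bounded domain $f(\Delta)$, and the area theorem $\int_\Delta|f'|^2\, dA = \mathrm{area}(f(\Delta))$ converts this into the bound $|E_\delta| \lesssim (1-r)/\delta^2$. Summing the resulting contributions over dyadic $\delta$ and checking that for $s$ large enough the dominant scale is the largest one, one obtains
\[
\int_0^{2\pi}|f'(re^{i\theta})|^{-s}\, d\theta \lesssim (1-r)^{1-s}
\]
uniformly in $f\in\mathcal{S}_b$, which amounts to $\beta_f(t) \leq |t|-1$ and hence $B_b(t) \leq |t|-1$ for $t \leq t_0$.

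The main obstacle is saving the full $-1$ in the exponent: the naive lower distortion $|f'|\geq c(1-|z|)$ alone only yields $\beta_f(t)\leq |t|$, and the improvement requires controlling the fraction of the circle on which $|f'|$ is close to its minimum, which is the role of the area estimate. The delicate step is the small-$\delta$ regime, where the Koebe-disc packing bound becomes lossy and the trivial bound $|E_\delta|\leq 2\pi$ must be replaced by a more refined Bergman-type $L^p$ estimate for $f'$; this is precisely what forces the existence of a threshold $t_0<0$ rather than the inequality holding for all negative $t$, and the cleanest implementation in the style of Carleson and Makarov uses a martingale or pressure formulation.
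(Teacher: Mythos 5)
This theorem is not proved in the paper; it is quoted verbatim from Carleson and Makarov \cite{CM}, so there is no ``paper's proof'' to compare against. Evaluating your sketch on its own merits, however, reveals a genuine gap that your last paragraph gestures at but does not resolve. Your dyadic decomposition plus Koebe-disc packing gives $|E_\delta| \lesssim \min(2\pi, (1-r)/\delta^2)$, and a dyadic-scale contribution of order $|E_\delta|\,(\delta/(1-r))^{-s}$. Tracking the exponents: for the scales $\delta \gtrsim (1-r)^{1/2}$ the area bound is active and the contribution sums to $O((1-r)^{s/2})$, while for $\delta \lesssim (1-r)^{1/2}$ you are stuck with $|E_\delta|\le 2\pi$ and the contribution $2\pi\,\delta^{-s}(1-r)^s$ grows as $\delta$ shrinks, peaking at the Koebe-distortion floor $\delta \asymp (1-r)^2$, where it equals $(1-r)^{-s}$. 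So the \emph{smallest} scale dominates, not the largest as you assert, and the total is $\lesssim (1-r)^{-s}$ — equivalently $\beta_f(t) \le |t|$, the trivial bound, not $|t|-1$.

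You actually flag this yourself (``the naive lower distortion alone only yields $\beta_f(t)\le|t|$'' and ``the delicate step is the small-$\delta$ regime''), but the preceding sentence claims the dyadic sum already produces $(1-r)^{1-s}$, which is inconsistent and false. The area estimate you invoke does not control the small-$\delta$ regime at all, because there the packing bound $(1-r)/\delta^2$ exceeds $2\pi$ and is useless. Replacing it with ``a more refined Bergman-type $L^p$ estimate'' or ``a martingale or pressure formulation'' is precisely the hard content of the Carleson--Makarov theorem; naming it is not the same as supplying it, and without that ingredient the proof does not close. As written, your argument proves nothing beyond the trivial $B_b(t)\le |t|$, and no value of $t_0$ can be extracted from it. To be complete you would need either to reproduce the Carleson--Makarov thermodynamic/pressure argument or to cite it, which is exactly what this paper does.
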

An interesting open problem is to find out the optimal $t_0$. It is only known that $t_0\leq -2$.
It is conjectured that $t_0=-2$, or equivalently that $B(-2)=1$, which is usually referred to as Brennan's conjecture. By the experimental work, Kraetzer has conjectured in \cite{K} that
\begin{conjecture}
\begin{equation}B_b(t)=\begin{cases}
\frac{t^2}{4}, \;\; \; \;\; \;  \;  {\textrm{if}} \;\, |t| \leq 2, \\
|t|-1, \; {\textrm{if}} \;\, |t|>2.
\end{cases}\nonumber
\end{equation}
\end{conjecture}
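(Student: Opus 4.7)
The conjecture naturally splits into three regimes. For $t \geq 2$, the identity $B_b(t) = t-1 = |t|-1$ is exactly the content of the preceding theorem, with $-\log(1-z)$ realizing the supremum. For $t \leq -2$, the assertion $B_b(t) = |t|-1$ is precisely the optimal $t_0 = -2$ case of the Carleson--Makarov Theorem \ref{CM}, i.e.\ Brennan's conjecture; so this regime is equivalent to a famous open problem, and I would attempt it only indirectly through the functional-analytic framework of the present paper.

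For the middle range $|t| < 2$ the goal is to prove matching bounds $B_b(t) = t^2/4$. The lower bound I would pursue constructively: produce a bounded univalent $f$ whose derivative has integral means of order $(1-r)^{-t^2/4}$, the natural candidates being iterated snowflake quasicircles together with lacunary-series constructions. The Teichm\"uller-theoretic viewpoint is useful here: one builds a one-parameter family of quasiconformal deformations whose Beltrami coefficients concentrate in a self-similar fashion and then invokes continuity of the IMS functional on the closure of the universal Teichm\"uller space to pass to the limit, obtaining a candidate extremal $f \in \mathcal{S}_b$. The upper bound $B_b(t) \leq t^2/4$ I would attack by combining Makarov's law of the iterated logarithm for Bloch functions with a careful second-moment analysis of $\log f'$, exploiting boundedness of $f$ through a Hardy-space argument on the image domain $f(\Delta)$ to trade polynomial growth for logarithmic integrability.

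The principal obstacle is the sharp constant $\tfrac14$ in the upper bound: all standard approaches, including Makarov's law of the iterated logarithm, Kayumov's dyadic-martingale decompositions, and the variational calculus on Teichm\"uller space, currently yield only $B_b(t) \leq C t^2$ with some $C > \tfrac14$, and closing this gap has resisted attack for roughly four decades. The endpoints $t = \pm 2$ are equally intractable, being equivalent to Brennan's conjecture. Consequently a realistic proposal is not a full proof but rather a reduction: use the continuity theorems of this paper to show that $B_b(t)$ is already determined by its restriction to a suitable dense family of model quasidisks in the universal (and asymptotic) Teichm\"uller space, and then verify the exact formula $t^2/4$ on that family by direct computation.
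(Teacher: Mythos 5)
This item is labelled a \emph{Conjecture} in the paper, and it is precisely that: Kraetzer's experimentally motivated formula for $B_b(t)$, which remains open. The paper offers no proof, so there is no argument to compare yours against; the honest answer is ``this is an open problem,'' and you correctly recognize that. Your identification of the three regimes is largely right: $t\geq 2$ is Pommerenke's theorem $B_b(t)=t-1$, and $t\leq -2$ is equivalent to Brennan's conjecture via Carleson--Makarov. For $|t|<2$ neither the upper bound $B_b(t)\leq t^2/4$ nor the lower bound $B_b(t)\geq t^2/4$ is known; the sharpest published lower bound cited in this very paper (Kayumov) gives only $B(t)>t^2/5$ on a short interval, so your suggestion that the lower bound ``I would pursue constructively'' via snowflakes or lacunary series is overly optimistic — those constructions do not presently reach the constant $1/4$ either.

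One concrete error: you write that ``the endpoints $t=\pm 2$ are equally intractable, being equivalent to Brennan's conjecture.'' This is false at $t=+2$. Pommerenke's theorem gives $B_b(t)=t-1$ for all $t\geq 2$, hence $B_b(2)=1=2^2/4$ is \emph{known}; only $t=-2$ is Brennan's conjecture. More broadly, your closing ``reduction'' — using the continuity theorems of this paper to determine $B_b(t)$ from a dense family of model quasidisks and then ``verifying $t^2/4$ by direct computation'' — is not a viable step: continuity of the IMS functional does not let you compute a supremum from a dense subset unless you already know the functional attains its supremum, and the paper itself flags this attainment question (Questions \ref{q-1} and \ref{q-2}) as open. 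So the proposal is a reasonable survey of obstructions, but it does not, and could not, constitute a proof of the statement, since the statement is an unresolved conjecture.
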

For more results on the integral means spectrum and related topics, see the monograph \cite{GM} and recent survey \cite{HSo} and references cited therein.

We let $\mathcal{S}_q$ be the class of all univalent functions $f$ that belong to $\mathcal{S}_b$ and admit a quasiconformal extension to $\widehat{\mathbb{C}}$. Later, if a univalent function $f$ belongs to $\mathcal{S}_q$,  we still use $f$ to denote its quasiconformal extension. Recently, some researchers have studied the integral means spectra of univalent functions admitting a quasiconformal extension to $\widehat{\mathbb{C}}$, see for example, \cite{HS-2}, \cite{Hed-1}, \cite{Hed-2}, \cite{Iv}, \cite{Jin}, \cite{PS} and \cite{Pr}. By the fractal approximation principle, see \cite{M}, \cite{CJ},  we know that
\begin{theorem}\label{quan}
For each $t\in \mathbb{R}$, we have
\begin{equation}\label{qua}
B_b(t)=\sup\limits_{f\in \mathcal{S}_q}\beta_f(t).
\end{equation}
\end{theorem}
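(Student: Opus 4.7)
The trivial direction $\sup_{f\in\mathcal{S}_q}\beta_f(t)\leq B_b(t)$ is immediate from the inclusion $\mathcal{S}_q\subset\mathcal{S}_b$. The content of the theorem is thus the reverse inequality, and my plan is to establish it by a fractal approximation argument: given $\epsilon>0$, pick $f\in\mathcal{S}_b$ with $\beta_f(t)\geq B_b(t)-\epsilon$ and construct $g\in\mathcal{S}_q$ with $\beta_g(t)\geq\beta_f(t)-\epsilon$.

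The first idea one naturally tries is the dilation $f_\rho(z)=f(\rho z)/\rho$ with $\rho<1$, which lies in $\mathcal{S}_q$ since it extends holomorphically past $\partial\Delta$ to the disk $\{|z|<1/\rho\}$. However, this destroys the spectrum completely: since $|f_\rho'(re^{i\theta})|=|f'(\rho re^{i\theta})|$ remains bounded as $r\to 1^-$, one gets $\beta_{f_\rho}(t)=0$. So the boundary roughness has to be reinstated by a fractal-type construction rather than erased by smoothing.

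The plan is therefore to invoke Makarov's fractal approximation scheme from \cite{M} (as also exploited in \cite{CJ}). Fix a scale $r_n$ close to $1$ and an arc $I_n\subset\partial\Delta$ where the integrand $|f'(r_ne^{i\theta})|^t$ is concentrated enough to realize $\beta_f(t)$ up to $\epsilon$ at the scale $1-r_n$. Using a Schottky-type arrangement in $\Delta$, one glues countably many rescaled copies of the local configuration of $f$ over $I_n$ in a self-similar pattern, producing a univalent map $g_n$ onto a bounded snowflake-like domain whose boundary is a quasicircle; by the theory of quasiconformal extensions of univalent maps with quasicircle images, $g_n\in\mathcal{S}_q$. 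By the recursive structure, at scale $(1-r_n)^k$ the integral $\int_0^{2\pi}|g_n'(\rho e^{i\theta})|^t\, d\theta$ factorizes (up to Koebe distortion) across the $k$-fold self-similar preimages of $I_n$, which by the choice of $r_n$ and $I_n$ is forced to decay like $(1-\rho)^{-\beta_f(t)+\epsilon}$, yielding $\beta_{g_n}(t)\geq\beta_f(t)-\epsilon$.

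The main obstacle, and the heart of the argument of \cite{M} and \cite{CJ}, is controlling the geometry of the gluing so that (i) the resulting boundary is genuinely a quasicircle, not merely a rough Jordan curve, and (ii) the spectral estimate survives at \emph{all} intermediate radii $\rho\uparrow 1$, not just at the discrete scales $(1-r_n)^k$. Point (i) is handled by choosing the Schottky group with uniformly bounded distortion, and point (ii) is handled either by a direct Koebe-distortion estimate across scales, or, more conceptually, by identifying $\beta_{g_n}(t)$ with the pressure of the transfer operator of the associated expanding self-similar dynamics and letting $r_n\to 1^-$ so that the pressure converges to $\beta_f(t)$. Combining these estimates and letting $\epsilon\to 0$ yields $\sup_{f\in\mathcal{S}_q}\beta_f(t)\geq B_b(t)$, completing the proof.
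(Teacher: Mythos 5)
The paper does not itself prove Theorem \ref{quan}; it simply quotes it as a known consequence of the fractal approximation principle, citing \cite{M} and \cite{CJ}. Your proposal correctly identifies the same sources and gives an accurate sketch of the Makarov/Carleson--Jones self-similar-gluing construction producing quasicircle images (and correctly observes why naive dilation destroys the spectrum), so your route coincides with the one the paper invokes.
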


We see from Theorem \ref{quan} that finding out the exact values of $B_b(t)$ can be thought of as a global extremal problem in the class $\mathcal{S}_q$. In this paper, we do not seek to find a better value of $B_b(t)$ but instead focus on the following  
\begin{question}\label{ques}
{\bf (1)} For fixed $t \in \mathbb{R}$ with $t\neq 0$, how does the integral means spectrum $\beta_f(t)$ depend on $f$ in the class $\mathcal{S}_q$? {\bf (2)} For each $t\neq 0$, does there exist at least one extremal function for $B_b(t)$? {\bf (3)} If the extremal functions for $B_b(t)$ exist, which subset of $\mathcal{S}$ do they belong to?  \end{question}
To answer this question, we introduce and study some functionals, induced by the integral means spectrum $\beta_f(t)$,  on the universal Teichm\"uller space $T$ and the universal asymptotic Teichm\"uller space $AT$. We call them integral means spectrum (IMS) functionals.  We will show that

\begin{theorem}\label{m-1}
For each $t\in \mathbb{R}$,  the IMS functional $$I_{{T}}: [\mu]_T  \mapsto  \beta_{f_{\mu}}(t), \,\, [\mu]_T \in {T},$$
is continuous.
\end{theorem}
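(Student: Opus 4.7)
The plan is to establish sequential continuity: assume $[\mu_n]_T \to [\mu]_T$ in the Teichm\"uller metric on $T$ and show $\beta_{f_{\mu_n}}(t) \to \beta_{f_\mu}(t)$. By the definition of the Teichm\"uller distance (equivalently by the existence of local holomorphic sections of the natural projection from the space of Beltrami coefficients to $T$), I can pick representatives so that the comparison map $\phi_n := f_{\mu_n}\circ f_\mu^{-1}$, which is conformal on the quasidisk $\Omega := f_\mu(\Delta)$, extends to a $K_n$-quasiconformal self-map of $\widehat{\mathbb{C}}$ with $K_n\to 1$. The chain-rule identity
$$f'_{\mu_n}(z)\;=\;\phi'_n\!\bigl(f_\mu(z)\bigr)\, f'_\mu(z)$$
reduces the problem to controlling how large $|\phi'_n|$ can become near $\partial\Omega$.

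The key technical ingredient is a Mori-type H\"older distortion estimate: a $K$-quasiconformal self-map of $\widehat{\mathbb{C}}$ that is conformal on $\Omega$ and normalized at $\infty$ satisfies
$$c(K)^{-1}\, d(w,\partial\Omega)^{\epsilon(K)} \;\leq\; |\phi'_n(w)| \;\leq\; c(K)\, d(w,\partial\Omega)^{-\epsilon(K)},\qquad w\in\Omega,$$
with $c(K)\to 1$ and $\epsilon(K)\to 0$ as $K\to 1$ (the lower bound comes from the upper bound applied to $\phi_n^{-1}$). Since $f_\mu$ is univalent on $\Delta$, Koebe's distortion theorem gives $d(f_\mu(z),\partial\Omega) \asymp (1-|z|)\,|f'_\mu(z)|$. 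For $t\ge 0$, combining these yields
$$\bigl|f'_{\mu_n}(z)\bigr|^t \;\leq\; C_n\,(1-|z|)^{-\epsilon_n t}\,|f'_\mu(z)|^{(1-\epsilon_n)t},$$
with a symmetric statement for $t<0$ and exponent $(1+\epsilon_n)t$ obtained from the lower bound on $|\phi'_n|$. Integrating on $|z|=r$, taking logarithms, dividing by $|\log(1-r)|$ and passing to $\limsup$ produces
$$\beta_{f_{\mu_n}}(t) \;\leq\; \epsilon_n|t| + \beta_{f_\mu}\bigl((1\mp\epsilon_n)t\bigr),$$
with the sign chosen according to that of $t$. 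Swapping the roles of $\mu$ and $\mu_n$ (i.e. running the same argument for $\phi_n^{-1}$) yields the reverse inequality.

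Since each $f_\mu\in\mathcal{S}_q$ is bounded, $\beta_{f_\mu}$ is finite on all of $\mathbb{R}$, and the convexity of $s\mapsto \log\int_0^{2\pi}|f'_\mu(re^{i\theta})|^s\,d\theta$ (H\"older) combined with the subadditivity $\limsup(a_n+b_n)\leq\limsup a_n+\limsup b_n$ forces $s\mapsto\beta_{f_\mu}(s)$ to be convex on $\mathbb{R}$, hence continuous. Letting $n\to\infty$ in the two-sided estimate therefore yields $\beta_{f_{\mu_n}}(t)\to\beta_{f_\mu}(t)$. The main obstacle is securing the Mori-type distortion estimate on the quasidisk $\Omega$ with an exponent $\epsilon(K)\to 0$; this is most cleanly achieved by conjugating $\phi_n$ with a Riemann map of $\Omega$ and invoking the classical H\"older estimate on $\Delta$ (due to Mori, sharpened via Astala's area distortion theorem), using the fact that the Riemann map of $\Omega$ itself is Koebe-controlled because $\Omega$ is a quasidisk. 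A secondary technical point is the initial representative selection, which requires the quasiinvariance of the Bers section (or a Slodkowski-type $\lambda$-lemma) to guarantee that $K_n$ can be made to tend to $1$ at a rate controlled by the Teichm\"uller distance.
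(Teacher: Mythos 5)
Your proof is correct but takes a genuinely different route from the paper's. The paper reduces Theorem \ref{m-1} to Theorem \ref{th-2} through the Bers embedding $\Lambda_1$ of Proposition \ref{bers-1}, so that convergence in $(T,d_T)$ becomes $E_1$-convergence of pre-Schwarzian derivatives, and then proves a stability estimate (Lemma \ref{key-lemma}) under the purely pre-Schwarzian hypothesis $\sup_{|z|\in(r,1)}|N_g(z)-N_f(z)|(1-|z|^2)<\varepsilon$: the difference of pre-Schwarzians is integrated along radii to get $|({g\circ f^{-1}})'\circ f(z)|\asymp(1-|z|)^{\mp\varepsilon/2}$ with no extraneous $|f'|$ factor (Lemma \ref{key-est}), and the conclusion $|\beta_g(t)-\beta_f(t)|\le|t|\varepsilon$ follows via the Hedenmalm--Shimorin weighted Bergman criterion (Lemma \ref{cri}). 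You instead work directly on the quasidisk $\Omega=f_\mu(\Delta)$ with a Mori-type distortion bound $|\phi_n'(w)|\le C_n\,d(w,\partial\Omega)^{-\epsilon_n}$, combine it with Koebe distortion for $f_\mu$, and close by convexity and continuity of $s\mapsto\beta_{f_\mu}(s)$. Your two-sided bound $\beta_{f_{\mu_n}}(t)\le\epsilon_n|t|+\beta_{f_\mu}((1\mp\epsilon_n)t)$ carries a shift in the $t$-argument (traceable to the extra $|f_\mu'|^{-\epsilon_n}$ picked up from Koebe) that the paper's radial-integration estimate avoids; this is precisely why you need the convexity step and the paper does not.

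Two cautions. (i) The suggested proof of the Mori-type bound by ``conjugating $\phi_n$ with a Riemann map of $\Omega$'' does not work as written: $f_\mu^{-1}\circ\phi_n\circ f_\mu=f_\mu^{-1}\circ f_{\mu_n}$ is in general not conformal on $\Delta$, since $f_{\mu_n}(\Delta)\not\subset\Omega$. The estimate is nonetheless true; it follows from a modulus-of-annuli argument of the type appearing in Lemmas \ref{l0}--\ref{l3}: for $d=d(w,\partial\Omega)$, compare the modulus of the round annulus $\Delta(w,D)\setminus\overline{\Delta(w,d)}$ with that of its image under $\phi_n$, apply Koebe to the conformal restriction $\phi_n|_{\Delta(w,d)}$, and obtain $|\phi_n'(w)|\le C(K_n,\Omega)\,d^{1/K_n-1}$. (ii) The claim $c(K)\to1$ is neither needed nor produced by this argument (the constant depends on $\Omega$); but it is harmless because the constant disappears after taking $\log$ and dividing by $|\log(1-r)|$.

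One structural remark: because Lemma \ref{key-lemma} only asks that $N_g-N_f$ be small in $E_1$-norm near the boundary, not that $g\circ f^{-1}$ have a quasiconformal extension with small dilatation, the paper's route carries over unchanged to the closure $\overline{T}_1$ (Theorem \ref{m-3}) and, with Proposition \ref{key} as input, to $AT$ (Theorem \ref{m-2}); your quasiconformal-distortion argument as written is tied to points of $T$ itself.
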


\begin{theorem}\label{m-2}
For each $t\in \mathbb{R}$, the IMS functional $$I_{{AT}}: [\mu]_{AT}  \mapsto  \beta_{f_{\mu}}(t), \,\, [\mu]_{AT} \in {AT},$$
is continuous.
\end{theorem}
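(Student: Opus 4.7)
My plan is to reduce Theorem \ref{m-2} to Theorem \ref{m-1} in two steps: first, show that $\beta_{f_\mu}(t)$ depends only on the asymptotic Teichm\"uller class $[\mu]_{AT}$ (so that $I_{AT}$ is well defined), and second, transfer continuity from $T$ to $AT$ via the quotient structure.

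For well-definedness, the key analytic fact I would establish is a regularity lemma: if $\mu,\nu\in M(\Delta^*)$, extended by $0$ to $\Delta$, coincide on some one-sided annulus $A_\delta=\{1<|z|<1+\delta\}$, then $f^\mu\circ(f^\nu)^{-1}$ is $1$-quasiconformal and hence conformal on $f^\nu(\Delta\cup A_\delta)$, an open neighborhood of $\overline{f_\nu(\Delta)}$; by compactness its derivative is bounded above and below on that closure, giving $|f_\mu'(z)|\asymp |f_\nu'(z)|$ on $\Delta$ and hence $\beta_{f_\mu}(t)=\beta_{f_\nu}(t)$. To reduce a general asymptotic equivalence $[\mu]_{AT}=[\nu]_{AT}$ to this exact-equality setting, I would invoke a characterization of $AT$ asserting that one can find representatives $\mu'\in[\mu]_T$ and $\nu'\in[\nu]_T$ with $\|\mu'-\nu'\|_{L^\infty(A_\delta)}$ arbitrarily small for small enough $\delta$. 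The hybrid Beltrami coefficient $\mu_\varepsilon:=\mu'\chi_{\Delta^*\setminus A_\delta}+\nu'\chi_{A_\delta}$ then agrees with $\nu'$ on $\Delta\cup A_\delta$, so $\beta_{f_{\mu_\varepsilon}}(t)=\beta_{f_\nu}(t)$ by the regularity lemma, while $\|\mu_\varepsilon-\mu'\|_{L^\infty}\to 0$ makes $[\mu_\varepsilon]_T\to[\mu']_T=[\mu]_T$ in $T$; Theorem \ref{m-1} then forces $\beta_{f_\mu}(t)=\beta_{f_\nu}(t)$.

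For the continuity step, given $[\mu_n]_{AT}\to[\mu]_{AT}$ in the asymptotic Teichm\"uller metric, I would invoke the identification of this metric as the quotient of the Teichm\"uller metric on $T$: one can select representatives $\widetilde\mu_n\in[\mu_n]_T$ and $\widetilde\mu\in[\mu]_T$ with $d_T(\widetilde\mu_n,\widetilde\mu)\to 0$. Well-definedness gives $I_{AT}([\mu_n]_{AT})=I_T([\widetilde\mu_n]_T)$ and similarly for $\mu$, and Theorem \ref{m-1} then finishes the argument. The main obstacle, and presumably the content of the ``new results about the universal asymptotic Teichm\"uller space'' promised in the abstract, is the pair of structural facts about $AT$ that these two steps rest on: the boundary-matching characterization of $AT$-equivalence used in well-definedness, and the lifting of $AT$-convergent sequences to $T$-convergent representatives used in continuity. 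The analytic content beyond these—the regularity lemma and the invocation of Theorem \ref{m-1}—is essentially routine once the Teichm\"uller-theoretic framework has been set up.
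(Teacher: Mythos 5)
Your well-definedness argument is correct and takes a genuinely different route from the paper. Where the paper passes through a technical estimate (Proposition~\ref{key}) to show that asymptotic equivalence of $\mu$ and $\nu$ forces $N_{f_\nu}-N_{f_\mu}\in E_{1,0}$, and then applies Lemma~\ref{key-lemma}, you instead use the elementary regularity observation that agreement of Beltrami coefficients on a one-sided annulus $A_\delta$ makes $f_\mu\circ f_\nu^{-1}$ conformal on a neighborhood of $\overline{f_\nu(\Delta)}$, combine it with the hybrid coefficient $\mu_\varepsilon=\mu'\chi_{\Delta^*\setminus A_\delta}+\nu'\chi_{A_\delta}$, and finish by invoking continuity on $T$ (Theorem~\ref{m-1}). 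This is a clean reduction and genuinely sidesteps the paper's Proposition~\ref{key} for this step; the price is that it relies on Theorem~\ref{m-1}, whereas the paper proves well-definedness independently of $T$-continuity.

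The continuity step, however, has a real gap. You assert that given $[\mu_n]_{AT}\to[\mu]_{AT}$ one can select $\widetilde\mu_n\in[\mu_n]_T$ and a \emph{single fixed} $\widetilde\mu\in[\mu]_T$ with $d_T(\widetilde\mu_n,\widetilde\mu)\to 0$. This does not follow from any quotient structure. At best, the quotient picture gives pairs $\mu_n'\sim_{AT}\mu_n$ and $\nu_n'\sim_{AT}\mu$ with $d_T([\mu_n']_T,[\nu_n']_T)\to 0$, where $\nu_n'$ is a \emph{moving} representative in the fiber over $[\mu]_{AT}$. That fiber is unbounded in $(T,d_T)$ (asymptotically conformal maps can have arbitrarily large Teichm\"uller distance from the basepoint), so there is no reason for the $\nu_n'$ to converge, or even to stay in a $d_T$-bounded set. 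To finish with the moving target you would need something like uniform continuity of the Bers map $\Lambda_1:(T,d_T)\to E_1$ along the sequence $[\nu_n']_T$, which is not available; $\Lambda_1$ is only a homeomorphism. The paper avoids the issue entirely by working in the quotient Banach space $E_1/E_{1,0}$: Proposition~\ref{abers-2} shows that $d_{AT}$-convergence is equivalent to convergence of the classes $[N_{f_\mu}]_{E_1}$ in the quotient norm, and smallness of $\|[N_{f_\mu}]_{E_1}-[N_{f_\nu}]_{E_1}\|$ yields exactly the near-boundary bound $\sup_{|z|\in(r,1)}|N_{f_\mu}-N_{f_\nu}|(1-|z|^2)<\varepsilon/|t|$ required by Lemma~\ref{key-lemma}. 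Some version of this asymptotic Bers embedding (or an equivalent fact) appears unavoidable, and it is precisely the sort of ``new result about $AT$'' the abstract promises; your proposal identifies that such structural input is needed but replaces it with a lifting claim that is not true as stated.
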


\begin{theorem}\label{m-3}
For each $t\in \mathbb{R}$, the IMS functional $$I_{\overline{T}_1}: \phi  \mapsto \beta_{f_{\phi}}(t), \,\, \phi \in \overline{T}_1,$$
is continuous.
\end{theorem}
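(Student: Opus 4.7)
The plan is to reduce Theorem~\ref{m-3} to Theorem~\ref{m-1} via a density argument, using that $T_1$ is dense in $\overline{T}_1$ by definition of the closure.

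First I would reformulate continuity sequentially: for $\phi\in\overline{T}_1$ and $\phi_n\to\phi$ in the Bers (Bloch-type) norm, show $\beta_{f_{\phi_n}}(t)\to\beta_{f_\phi}(t)$. When $\phi\in T_1$ the conclusion follows from Theorem~\ref{m-1} and the openness of $T_1$, so the essentially new situation is a boundary point $\phi\in\partial T_1$, where $f_\phi$ is univalent on $\Delta$ but may not admit any quasiconformal extension to $\widehat{\mathbb{C}}$; in particular the quasiconformal deformation arguments used in the proof of Theorem~\ref{m-1} are unavailable there.

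The technical core is a purely conformal distortion estimate comparing $f_\phi$ and $f_\psi$ for $\phi,\psi\in\overline{T}_1$. Viewing the pre-Schwarzian $g_\phi=f''_\phi/f'_\phi$ as the unique solution with $g_\phi(0)=0$ of the Riccati equation $g'=\phi+\tfrac12 g^2$ (under the standard normalisation $f_\phi(0)=0$, $f'_\phi(0)=1$, $f''_\phi(0)=0$), subtraction yields
\[
(g_\phi-g_\psi)' \;=\; (\phi-\psi) + \tfrac12(g_\phi+g_\psi)(g_\phi-g_\psi),
\]
which, combined with the Nehari--Kraus bound $(1-|z|^2)|g_\phi(z)|\le C$ holding uniformly on $\overline{T}_1$, controls $|\log f'_\phi(z)-\log f'_\psi(z)|$ in terms of $\|\phi-\psi\|_B$ and $|z|$ via a Gronwall-type argument. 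Since this step uses only univalence and not quasiconformal extendability, it persists at $\partial T_1$. Raising to the $t$-th power, integrating over $|z|=r$, taking logarithms, dividing by $|\log(1-r)|$ and passing to $\limsup_{r\to 1^-}$ then produces the continuity conclusion for $I_{\overline{T}_1}$, along the same lines as \eqref{est--0} is used for the Koebe function.

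The main obstacle will be obtaining the distortion estimate with dependence on $|z|$ sharp enough to survive division by $|\log(1-r)|$ in the definition of $\beta_f(t)$: a naive Gronwall bound yields polynomial blow-up as $|z|\to 1$, which is too weak. One must exploit the self-improving nature of the nonlinear term, for instance by iterating the estimate on dyadic annuli or by invoking finer univalent-function distortion results valid uniformly on $\overline{T}_1$, to sharpen the bound to one compatible with the logarithmic normalisation in the definition of the integral means spectrum.
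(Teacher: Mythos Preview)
Your proposal rests on a misidentification of what $\overline{T}_1$ is. In this paper $T_1=\mathbf{N}_q$ is the image of the Bers map $\Lambda_1:[\mu]_T\mapsto N_{f_\mu}$, so an element $\phi\in\overline{T}_1$ is already the \emph{pre}-Schwarzian derivative $N_{f_\phi}=f''_\phi/f'_\phi$, not the Schwarzian. There is no Riccati equation to solve, no normalisation $f''_\phi(0)=0$, and no Gronwall argument to worry about: for $\phi,\psi\in\overline{T}_1$ one has directly
\[
\bigl(\log(f'_\phi/f'_\psi)\bigr)'(z)=N_{f_\phi}(z)-N_{f_\psi}(z)=\phi(z)-\psi(z),
\]
and integrating radially from $r e^{i\arg z}$ to $z$ against the bound $|\phi(\zeta)-\psi(\zeta)|(1-|\zeta|^2)\le\|\phi-\psi\|_{E_1}$ gives immediately
\[
\bigl|\log|f'_\phi(z)|-\log|f'_\psi(z)|\bigr|\le \tfrac12\|\phi-\psi\|_{E_1}\,\log\tfrac{1+|z|}{1-|z|}+O(1).
\]
This is exactly the distortion estimate the paper records as Lemma~\ref{key-est}, and it is already of the right logarithmic order to survive division by $|\log(1-r)|$. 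Feeding it into the weighted-Bergman characterisation of $\beta_f(t)$ (Lemma~\ref{cri}) yields $|\beta_{f_\phi}(t)-\beta_{f_\psi}(t)|\le |t|\,\|\phi-\psi\|_{E_1}$, which is Lemma~\ref{key-lemma} and gives continuity on all of $\overline{T}_1$ at once. No density argument is needed, and indeed the paper runs the logic in the opposite direction from what you propose: Theorem~\ref{th-2} is proved directly and Theorem~\ref{th-1} is deduced from it via Proposition~\ref{bers-1}.

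The obstacle you anticipate --- that a Gronwall-type comparison of pre-Schwarzians from the Schwarzian data blows up polynomially as $|z|\to1$ --- is real, but it belongs to the Schwarzian model $\overline{T}_2=\overline{\mathbf{S}}_q$, not to $\overline{T}_1$. The paper explicitly leaves the continuity of $I_{\overline{T}_2}$ open as Question~\ref{q-3}, precisely because the passage from $S_f$ to $N_f$ involves the nonlinear step you describe. So your technical concern is well founded, just aimed at the wrong space.
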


The paper is organized as follows. We recall some basic definitions and results on univalent functions and quasiconformal mappings, and then prove some auxiliary results in the next section.  In Section 3,  after introducing some definitions and notations, we will restate Theorem \ref{m-1}, \ref{m-2}, \ref{m-3}. We shall present the proof of  Theorem \ref{m-1}, \ref{m-2}, \ref{m-3} in Section 4. In Section 5, we establish a final main theorem, which shows that the integral means spectrum of any univalent function admitting a quasiconformal extension  to $\widehat{\mathbb{C}}$ is strictly less than the universal integral means spectrum. We also give some remarks and questions in this last section.

\section{\bf {Preliminaries and auxiliary results}}
In this section, we first recall some basic results of univalent functions(conformal mappings) and quasiconformal mappings, for references,  see \cite{Du, Po, L, LV}. Then we establish some auxiliary results which will be needed later.

Let $\Delta^{*}=\widehat{\mathbb{C}}-\overline{\Delta}$ be the
exterior of $\Delta$ and
$\mathbb{T}=\partial\Delta=\partial\Delta^{*}$ be the unit circle. We use the notation $\Delta(z,r)$ to denote the disk centered at $z$ with radius $r$ and we will write $\Delta(r)$ to denote the disk centered at $0$ with radius $r$. We will use ${\textup{Area}}(\Omega)$ to denote the two dimensional Lebesgue measure of the measurable set $\Omega$ of the complex plane. For two compact sets $A, B$ of the complex plane, we define the distance of  $A$ and $B$, denoted by ${\text{dist}}(A, B)$, as
\begin{equation}
{\textup{dist}}(A, B):=\min_{x\in A, y\in B}|x-y|. \nonumber
\end{equation}
Let $\Omega$ be a simply connected proper subdomain of $\mathbb{C}$.  We shall use $\rho_{\Omega}$ to denote the hyperbolic metric with curvature $-4$ in $\Omega$.  That is 
$$\rho_{\Omega}(w)=\frac{|\tau'(w)|}{1-|\tau(w)|^2},\,\, w\in \Omega.$$
Here $\tau$ is a univalent function from $\Omega$ to $\Delta$.  In particular, $$\rho_{\Delta}(z)=(1-|z|^2)^{-1}, z\in \Delta.$$ 
For an analytic function $f$ in the simply connected domain $\Omega$ with $f(w)\neq 0$ for all $w\in \Omega$, we will use $\log f$ to denote a certain single-valued branch of the logarithm of $f$. We will use $C(\cdot), C_{1}(\cdot), C_{2}(\cdot),\cdots$ to denote some positive numbers which depend only on the elements in the bracket and the numbers may be different in different places. 

\subsection{Univalent functions (conformal mappings)} First, we have
\begin{proposition}\label{pro-1}
Let $f\in \mathcal{U}$. Then, for any $z\in \Delta$,
\begin{equation} \frac{1}{4}(1-|z|^2)|f'(z)|\leq  {\textup{dist}}(f(z), \partial f(\Delta))\leq (1-|z|^2)|f'(z)|. \nonumber \end{equation}
\end{proposition}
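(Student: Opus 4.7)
The plan is to prove the two inequalities separately by reducing everything, via a disk automorphism, to the normalized class $\mathcal{S}$, and then invoking the classical Koebe $1/4$ theorem for the lower bound and the Schwarz lemma for the upper bound.

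For the lower bound, I would fix $z \in \Delta$ and define the disk automorphism $\phi_z(w) = (w+z)/(1+\bar{z}w)$, which sends $0$ to $z$ and satisfies $\phi_z'(0) = 1 - |z|^2$. Then I would set
\[
g(w) = \frac{f(\phi_z(w)) - f(z)}{(1-|z|^2)\,f'(z)}.
\]
A direct computation shows $g(0) = 0$ and $g'(0) = 1$, and $g$ is univalent as a composition of univalent maps, so $g \in \mathcal{S}$. By the Koebe $1/4$ theorem applied to $g$, the disk $\Delta(0, 1/4)$ lies in $g(\Delta)$, which is just the image $f(\Delta)$ translated by $-f(z)$ and rescaled by $1/((1-|z|^2)f'(z))$. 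Translating this containment back to $f(\Delta)$ yields
\[
\operatorname{dist}(f(z), \partial f(\Delta)) \geq \tfrac{1}{4}(1-|z|^2)|f'(z)|.
\]

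For the upper bound, set $d = \operatorname{dist}(f(z), \partial f(\Delta))$ so that $\Delta(f(z), d) \subseteq f(\Delta)$. Since $f$ is univalent, its inverse $F = f^{-1}$ is a well-defined holomorphic map from $\Delta(f(z), d)$ into $\Delta$ with $F(f(z)) = z$. Rescaling by defining $G(w) = F(f(z) + dw)$ gives a holomorphic map $G \colon \Delta \to \Delta$ with $G(0) = z$ and $G'(0) = d/f'(z)$. Applying the Schwarz–Pick inequality at $0$ yields
\[
\frac{d}{|f'(z)|} = |G'(0)| \leq 1 - |G(0)|^2 = 1 - |z|^2,
\]
which rearranges to $d \leq (1-|z|^2)|f'(z)|$, as desired.

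The only non-trivial ingredient is the Koebe $1/4$ theorem for the class $\mathcal{S}$, which is standard and recorded in the references cited at the start of Section 2; no genuine obstacle is expected here since both inequalities are classical consequences of the normalization trick plus Schwarz-type arguments.
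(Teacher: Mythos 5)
Your proof is correct. The paper itself states Proposition~\ref{pro-1} without proof, treating it as a standard fact from the theory of univalent functions (it is the usual Koebe distortion estimate, cf.\ the references to Duren and Pommerenke in Section~2). Your argument---renormalizing $f$ by the Koebe transform $g(w)=\bigl[f(\phi_z(w))-f(z)\bigr]/\bigl[(1-|z|^2)f'(z)\bigr]$ and applying the Koebe $1/4$ theorem for the lower bound, then applying the Schwarz--Pick lemma to $G(w)=f^{-1}(f(z)+dw)$ for the upper bound---is exactly the textbook derivation, with all normalizations computed correctly ($g(0)=0$, $g'(0)=1$, and $G'(0)=d/f'(z)$), so it fills in precisely what the paper leaves to the reader.
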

\begin{remark}\label{re-1}
Let $f$ be a univalent function in $\Delta$ and $g$ be a univalent function in $f(\Delta)$. It follows from Proposition \ref{pro-1} that
\begin{equation}
{\textup{dist}}(g\circ f(z), \partial g(f(\Delta))) \leq (1-|z|^2)|[g\circ f(z)]'|= (1-|z|^2)|g'\circ f(z)|\cdot |f'(z)|, \nonumber
\end{equation}
and
\begin{equation}
(1-|z|^2)|f'(z)|\leq 4{\textup{dist}}(f(z), \partial f(\Delta)).\nonumber \end{equation}
Consequently, we have
$${\textup{dist}}(g(\zeta), \partial g(f(\Delta))) \leq 4 |g'(\zeta)|{\textup{dist}}(\zeta, \partial f(\Delta)).$$
Here $z\in \Delta, \zeta=f(z)\in f(\Delta).$
\end{remark}

 \begin{remark}\label{re-2}
Let $f$ be a univalent function from $\Delta$ to a bounded Jordan domain $\Omega$ in $\mathbb{C}$. Then we know that  $f$ can be extended to a mapping which is a homeomorphism from $\overline{\Delta}$ to $\overline{\Omega}$.
The extended mapping is still denoted by $f$. Then we have
\begin{equation}
{\textup{dist}}(f(z), f(\mathbb{T}))={\textup{dist}}(0, \mathbf{K}_f(\mathbb{T}))(1-|z|^2)|f'(z)|,\,\,z\in \Delta. \nonumber
\end{equation}
Here $\mathbf{K}_f$ is the Koebe transform of $f$ (with respect to $z\in \Delta$), defined as
\begin{equation}
\mathbf{K}_f(w):=\frac{f(\sigma_{z}(w))-f(z)}{(1-|z|^2)|f'(z)|},\,\,\,\sigma_{z}(w)=\frac{z+w}{1+\overline{z}w},\,\,\, w\in \Delta.\nonumber \end{equation}
On the other hand, by Proposition \ref{pro-1}, we have
$${\textup{dist}}(0, \mathbf{K}_f(\mathbb{T}))\asymp |\mathbf{K}_f(e^{i\arg z})-\mathbf{K}_f(0)|=\frac{|f(z)-f(e^{i\arg z})|}{(1-|z|^2)|f'(z)|}.$$
Then we obtain that
\begin{equation}
{\textup{dist}}(f(z), f(\mathbb{T}))\asymp |f(z)-f(e^{i\arg z})|,\, z\in \Delta. \nonumber
\end{equation}
\end{remark}

Let $\mathcal{A}(\Delta)$ denote the class of all analytic functions in $\Delta$.    We let ${E}_j$ be  the Banach space of functions $\phi \in \mathcal{A}(\Delta)$ with the norm
  \begin{equation}
 \|\phi\|_{E_j}:= \sup_{z\in \Delta} |\phi(z)|(1-|z|^2)^{j}<\infty, \,\,\,\, j=1,2. \nonumber
 \end{equation}

Let $f$ be a locally univalent function in an open domain $\Omega$ of $\mathbb{C}$. The {\em Pre-Schwarzian derivative} $N_f$ of $f$, and the {\em Schwarzian derivative} $S_f$ of $f$ are defined as
 $$N_f(z):=\frac{f''(z)}{{f'(z)}}, \, z\in \Omega,$$ and 
 $$S_f(z):=[N_f(z)]'-\frac{1}{2}[N_f(z)]^2=\frac{f'''(z)}{f'(z)}-\frac{3}{2}\left[\frac{f''(z)}{f'(z)}\right]^2, \,z\in \Omega.$$
Let $g$ be another locally univalent function in $f(\Omega)$. Then we have
\begin{equation}\label{chain}N_{g \circ f}(z)=N_{g}(f(z))[f'(z)]+N_f(z),\,\, z\in \Omega,  \end{equation}
and \begin{equation}\label{chain-1}S_{g \circ f}(z)=S_{g}(f(z))[f'(z)]^2+S_f(z),\,\, z\in \Omega. \end{equation}

It is well known that
 $$|N_f(z)|(1-|z|^2)\leq 6,\,\,{\text {and}}\,\, |S_f(z)|(1-|z|^2)^2\leq 6,$$
 for all $f\in \mathcal{U}$.  This means that
  $\|N_f\|_{E_1}\leq 6\,\,{\text {and}}\,\, \|S_f\|_{E_2}\leq 6$ for any $f\in \mathcal{U}$.

 We define the classes $\mathbf{N}$ and $\mathbf{S}$ as
$$
 \mathbf{N}=\{\phi \in \mathcal{A}(\Delta): \, \phi=N_f(z), f\in \mathcal{U}\}, \,\,\,
 \mathbf{S}=\{\phi \in \mathcal{A}(\Delta): \, \phi=S_f(z), f\in \mathcal{U}\}.
$$
Then we see that $\mathbf{N} \subset E_1$ and $\mathbf{S} \subset E_2$. Moreover, we have
\begin{proposition}\label{pro-2}
$\mathbf{N}$ and $\mathbf{S}$ are closed in $E_1$ and  $E_2$, respectively.
\end{proposition}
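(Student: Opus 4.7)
The plan is to exploit the ODEs that characterize a univalent function from its pre-Schwarzian or Schwarzian derivative, together with normalizations that kill the corresponding ambiguities, and to invoke Hurwitz's theorem to preserve univalence in the limit.

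The first (and essentially automatic) observation is that convergence in $E_j$ implies local uniform convergence on $\Delta$: on any compact $\overline{\Delta(r)}$ with $r<1$ one has $(1-|z|^2)^{j}\ge (1-r^2)^j$, so $\sup_{|z|\le r}|\phi_n-\phi|\le (1-r^2)^{-j}\|\phi_n-\phi\|_{E_j}\to 0$. This will be used throughout.

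For $\mathbf{N}$, suppose $\phi_n=N_{f_n}\in \mathbf{N}$ converges in $E_1$ to some $\phi\in E_1$. Since $N_f$ is invariant under affine post-composition, I replace $f_n$ by $(f_n-f_n(0))/f_n'(0)$ and assume $f_n(0)=0$, $f_n'(0)=1$, giving the closed-form representation
\[
f_n(z)=\int_0^z \exp\!\left(\int_0^w \phi_n(\zeta)\,d\zeta\right)dw.
\]
Local uniform convergence $\phi_n\to\phi$ lets me pass to the limit inside both integrals, producing a holomorphic $f$ on $\Delta$ with $f(0)=0$, $f'(0)=1$ and $f_n\to f$ locally uniformly. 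Hurwitz then forces $f$ to be univalent (it is non-constant since $f'(0)=1$), and a direct differentiation gives $N_f=\phi$, so $\phi\in \mathbf{N}$.

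For $\mathbf{S}$ the scheme is parallel but routed through the linear ODE $2u''+\phi u=0$, whose solutions $u,v$ with initial data $u(0)=1,u'(0)=0$ and $v(0)=0,v'(0)=1$ satisfy $S_{v/u}=\phi$. Given $\phi_n=S_{f_n}\to\phi$ in $E_2$, and using M\"obius invariance of $S_f$, I post-compose each $f_n$ with a suitable M\"obius map so that $f_n(0)=0$, $f_n'(0)=1$, $f_n''(0)=0$; this makes $f_n=v_n/u_n$ for the normalized solutions of $2u''+\phi_n u=0$. The Wronskian identity $f_n'=W_n/u_n^2=1/u_n^2$ combined with the univalence of $f_n$ shows $u_n$ is nowhere zero on $\Delta$. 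Continuous dependence of solutions of a linear ODE on its coefficients, applied on disks $\Delta(r)$ exhausting $\Delta$, yields $u_n\to u$ and $v_n\to v$ locally uniformly, where $u,v$ solve the limit ODE with the same initial data; Hurwitz applied to $u_n$ (using $u(0)=1$) shows $u$ is nowhere zero on $\Delta$, so $f:=v/u$ is holomorphic on $\Delta$ with $f_n\to f$ locally uniformly. A final application of Hurwitz together with $f'(0)=1$ forces $f$ univalent, and $S_f=\phi$ by construction. The main technical friction is in the $\mathbf{S}$ case: carrying the zero-freeness of $u_n$ (hence the well-definedness of the limit ratio $f=v/u$ on all of $\Delta$) through the limit. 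Everything else reduces to direct limit arguments in an explicit integral or ODE formula.
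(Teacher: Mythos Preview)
Your proof is correct. For $\mathbf{N}$, you and the paper both hinge on the integral representation $f(z)=\int_0^z\exp\bigl(\int_0^w\phi\bigr)\,dw$, but you streamline the argument: the paper first invokes normality of $\mathcal{S}$ to extract a convergent subsequence $f_n\to g$, separately constructs $f_\Phi$ from $\Phi$ via the integral, and then uses an auxiliary claim (affine rigidity of the pre-Schwarzian) to identify $g$ with $f_\Phi$; you instead show directly that the normalized $f_n$ converge to the integral-formula limit and apply Hurwitz, which avoids both the subsequence step and the rigidity claim. For $\mathbf{S}$, the paper simply cites Lehto's book, whereas you supply the standard ODE argument in full; the main point you flag---carrying zero-freeness of $u_n$ through the limit via Hurwitz and $u(0)=1$---is exactly the right place to be careful, and your treatment handles it correctly.
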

\begin{remark} It has been proved in \cite[Page 115]{L} that $\mathbf{S}$ is closed in $E_2$.  The statement that {\em $\mathbf{N}$ is closed in $E_1$} is also known in the literature. For the 
completeness of the paper, we will present a direct proof for this statement and some arguments of the proof will be used later.  First, we have
\begin{claim}\label{cla---}
For two locally univalent functions $f_1, f_2$ in $\Delta$, we have
$N_{f_1}(z)=N_{f_2}(z)$ for all $z\in \Delta$ if and only if there are two numbers $a, b\in \mathbb{C}$ such that $f_1(z)=af_2(z)+b$ for all $z\in \Delta$.
\end{claim}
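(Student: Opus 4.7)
The plan is to observe that $N_f = f''/f'$ is precisely the logarithmic derivative $(\log f')'$, so that Claim \ref{cla---} reduces to an elementary integration argument.

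First, I would dispose of the easy direction: if $f_1(z)=af_2(z)+b$ in $\Delta$, then local univalence of $f_1$ forces $a\neq 0$, and differentiating twice yields $f_1'=af_2'$ and $f_1''=af_2''$, from which $N_{f_1}=f_1''/f_1'=f_2''/f_2'=N_{f_2}$ follows by cancellation of $a$. No regularity issues arise because $f_1'$ and $f_2'$ are nonvanishing.

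For the forward direction, I would note that since $f_1$ is locally univalent, $f_1'$ is a nonvanishing holomorphic function on the simply connected domain $\Delta$, so it admits a single-valued holomorphic logarithm $\log f_1'$; similarly for $f_2$. Then
\begin{equation}
N_{f_j}(z)=\frac{f_j''(z)}{f_j'(z)}=\bigl(\log f_j'(z)\bigr)',\qquad j=1,2,\ z\in\Delta.\nonumber
\end{equation}
The hypothesis $N_{f_1}\equiv N_{f_2}$ on $\Delta$ therefore gives $(\log f_1'-\log f_2')'\equiv 0$, so $\log f_1'-\log f_2'$ is a constant on the connected set $\Delta$. Exponentiating, $f_1'(z)=c\,f_2'(z)$ for some constant $c\in\mathbb{C}\setminus\{0\}$. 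One more integration on the simply connected domain $\Delta$ yields $f_1(z)=c\,f_2(z)+b$ for some $b\in\mathbb{C}$, which is the desired form with $a=c$.

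Neither step presents a real obstacle; the only point that needs to be mentioned explicitly is the nonvanishing of $f_j'$ (from local univalence) together with the simple connectivity of $\Delta$, which together guarantee both that $\log f_j'$ is well defined and single-valued and that the primitive used in the final integration exists globally on $\Delta$. Thus the entire claim is a one-line consequence of the identity $N_f=(\log f')'$.
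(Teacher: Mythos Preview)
Your proof is correct and follows essentially the same approach as the paper: both arguments integrate the identity $N_f=(\log f')'$ once to conclude $f_1'=c\,f_2'$ and then integrate again to obtain the affine relation. The paper carries this out by writing the explicit integral formula $f_j(z)=\int_0^z e^{\int_0^\zeta \phi(w)\,dw+\log f_j'(0)}\,d\zeta+f_j(0)$ and factoring out the constant $e^{\log f_1'(0)-\log f_2'(0)}$, whereas you phrase the same computation more conceptually via the logarithmic derivative; the content is identical.
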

The if part of this claim is trivial. We only prove the only if part. We let $N_{f_j}(z):=\phi(z), j=1,2$. Then
$$f_{j}(z):=\int_{0}^{z} e^{\int_{0}^{\zeta}\phi(w) dw+\log f_{j}'(0)}d\zeta+f_j(0),\, j=1,2.$$
It follows that
\begin{eqnarray}f_1(z) \nonumber &=&\int_{0}^{z} e^{\int_{0}^{\zeta}\phi(w) dw+\log f_{1}'(0)}d\zeta+f_1(0).\\&=&e^{[\log f'_1(0)-\log f'_2(0)]}[\int_{0}^{z} e^{\int_{0}^{\zeta}\phi(w) dw+\log f_{2}'(0)}d\zeta+f_2(0)]
\nonumber \\
&&\quad\quad\quad +f_1(0)-e^{[\log f'_1(0)-\log f'_2(0)]}f_2(0) .\nonumber
\end{eqnarray}
This proves the only if part of the claim and the proof of Claim \ref{cla---} is done.

We proceed to prove the statement.  We suppose that there is a sequence $\{{\bf f}_n\}_{n=1}^{\infty}$, ${\bf f}_n\in \mathcal{U}$ and ${\Phi}\in E_1$ such that $$\lim\limits_{n\rightarrow \infty}\|N_{{\bf f}_n}-\Phi\|_{E_1}=0.$$
First we have
$$\Phi(z)=\lim\limits_{n\rightarrow \infty}N_{{\bf f}_{n}}(z),$$
for each $z\in \Delta$. We will show that there is an $f_{\Phi}\in \mathcal{U}$ such that $N_{f_{\Phi}}=\Phi$. Without loss of generality,  since the Pre-Schwarzian derivative is affine invariant, we assume that ${\bf f}_n \in \mathcal{S}$ for all $n$.
Noting that $\{{\bf f}_n\}$ is a normal family, then $\{{\bf f}_n\}$ contains a subsequence (still denoted by $\{{\bf f}_n\}$) which is locally uniformly convergent in $\Delta$. We denote by $g$ the limit of the subsequence, i.e.,
$$g(z)=\lim\limits_{n\rightarrow \infty}{\bf f}_{n}(z), $$
and we know that $g\in \mathcal{S}$. Then at every point $z\in \Delta$ we have
$$N_{g}(z)=\lim\limits_{n\rightarrow \infty}N_{{\bf f}_{n}}(z).$$
On the other hand, for $\Phi\in E_1$, there is a locally univalent function $f_{\Phi}$ in $\Delta$ such that $$\Phi(z)=N_{f_{\Phi}}(z),\,\, z\in \Delta.$$
For example, we can take
$$f_{\Phi}(z):=\int_{0}^{z} e^{\int_{0}^{\zeta}\Phi(w) dw}d\zeta, \,\,\,z\in\Delta.$$
Consequently, we obtain that $N_g(z)=N_{f_{\Phi}}(z).$
Then we know from the above claim that $f_{\Phi}$ is univalent in $\Delta$.  This means that $\mathbf{N}$ is closed in $E_1$.
\end{remark}

\subsection{Quasiconformal mappings} We say a sense-preserving homeomorphism $f$,  from one open domain $\Omega$ in
$\mathbb{C}$ to another is a quasiconformal mapping if it has locally square
integral distributional derivatives and satisfies the Beltrami equation $\bar{\partial}f=\mu_f  \partial{f}$ with
$$\|\mu_f\|_{\infty}=\mathop{\text {ess sup}}\limits_{z\in \Omega} |\mu_f(z)|<1.$$
Here the function $\mu_f(z)$ is called the {\em  Beltrami coefficient} of $f$ and
$$\bar{\partial}f=f_{\bar{z}} :=\frac{1}{2}\left(\frac{\partial}{\partial x}+i\frac{\partial}{\partial y}\right)f,\,\,\, \partial f=f_{z} :=\frac{1}{2}\left(\frac{\partial}{\partial x}-i\frac{\partial}{\partial y}\right)f.$$

Let
$$D_f(z):=\frac{1+|\mu_f(z)|}{1-|\mu_f(z)|}, \,\,K_f:= \frac{1+\|\mu_f\|_{\infty}}{1-\|\mu_f\|_{\infty}}.$$
We call $D_f(z)$ the {\em dilatation function} of $f$ in $\Omega$ and $K_f$ is called the {\em dilatation} of $f$.

Let $f$ be a quasiconformal mapping from one open domain $\Omega_1$ to another domain $\Omega_2$. If $g$ is another quasiconformal mapping from $\Omega_1$ to $\Omega_3$. Then the Beltrami coefficients of $f, g$ and $g \circ f^{-1}$ satisfy the following chain rule.
\begin{equation}\label{dila}
\mu_{g \circ f^{-1}}\circ f(z) =\frac{1}{\tau}\frac{\mu_g(z)-\mu_f(z)}{1-\overline{\mu_f(z)}\mu_g(z)},\,\, \tau=\frac{\overline{\partial f}}{\partial f}, \,\,\, z\in \Omega_1.
\end{equation}
 Let $f$ be a bounded univalent function in a Jordan domain $\Omega$ of $\mathbb{C}$ admitting a quasiconformal mapping (still denoted by $f$) to $\widehat{\mathbb{C}}$.  The {\em boundary dilatation} of $f$, denoted by $b(f)$, is defined as
\begin{equation}
b(f):=\inf\{\|\mu_f|_{\Omega^{*}-E}\|_{\infty}: \, E {\text { is a compact set in}}\,\, \Omega^{*}\}. \nonumber
\end{equation}
Here $ \Omega^{*}=\widehat{\mathbb{C}}-\overline{\Omega}$ is seen as an open set in the Riemann sphere $\widehat{\mathbb{C}}$ under the spherical distance and $b(f)$ is the infimum of $\|\mu_f|_{\Omega^{*}-E}\|_{\infty}$ over all compact subsets $E$ contained in $\Omega^{*}$.

We say a domain $D$ in $\mathbb{C}$ is  a {\em ring domain} if it can be conformally mapped into an annulus $\{z: 0<r_1<|z|<r_2<\infty\}$, and the module ${\textup{Mod}}(D)$ of such ring domain $D$ is defined as ${\textup{Mod}}(D)=\log \frac{r_2}{r_1}$.
\begin{lemma}\label{l0}
Let $D_1$ be a ring domain in $\mathbb{C}$. Let $D_2$ be another ring domain such that ${D_2}$ is contained in $D_1$ and $D_1-{D_2}$ is not connected. Then we have
\begin{equation}
{\textup{Mod}}(D_2) \leq {\textup{Mod}}(D_1).\nonumber
\end{equation}
\end{lemma}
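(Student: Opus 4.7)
The plan is to use the extremal length characterization of the module of a ring domain. Recall that if $D$ is a ring domain, then $M(D) = 2\pi\,\lambda(\Gamma_D)$, where $\Gamma_D$ is the family of locally rectifiable arcs in $D$ connecting its two boundary components and $\lambda(\cdot)$ denotes extremal length; this identity is immediate on the model annulus $\{r_1<|z|<r_2\}$ by using $z\mapsto \log z$ to unfold it into a rectangle of height $2\pi$ and width $\log(r_2/r_1)$.

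First I would unpack the topological hypothesis. Because $B_2\subset B_1$ is a ring domain and $B_1\setminus B_2$ is disconnected, $B_2$ must separate the two boundary components of $B_1$ in $\widehat{\mathbb{C}}$. Applying the Jordan curve theorem (or Janiszewski's theorem) to a core simple closed curve of $B_2$, one sees that $B_1\setminus B_2$ has exactly two components, one accumulating on each boundary component of $B_1$, and each of the two boundary components of $B_2$ borders a different one of these components.

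Next I would show that every arc $\gamma\in\Gamma_{B_1}$ contains a subarc lying in $\Gamma_{B_2}$. The arc $\gamma$ starts on one boundary component of $B_1$, hence in one component of $B_1\setminus B_2$, and ends on the other; by connectedness $\gamma$ must pass through $B_2$, and selecting its first entry across one boundary component of $B_2$ together with its last exit across the other yields the desired subarc in $\Gamma_{B_2}$.

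The proof then concludes by a standard extremal-length comparison. For any metric $\rho$ admissible for $\Gamma_{B_2}$, extend it by $0$ to $B_1\setminus B_2$; the preceding step ensures that it remains admissible for $\Gamma_{B_1}$, while its Dirichlet integral is unchanged. Taking infima in the dual definition of extremal length (the module of a curve family equals $\inf\iint \rho^2\,dA$ over admissible $\rho$) gives $\lambda(\Gamma_{B_2})\leq \lambda(\Gamma_{B_1})$, and hence $M(B_2)\leq M(B_1)$. The main obstacle I expect is the topological step: a clean verification that $B_2$ genuinely separates the two boundary components of $B_1$ in the strong sense required, rather than merely rendering $B_1\setminus B_2$ disconnected.
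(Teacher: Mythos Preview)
The paper states this lemma without proof, treating it as a standard fact from the theory of quasiconformal mappings; it is the classical Gr\"otzsch module monotonicity (see e.g.\ \cite{LV}). Your extremal-length argument is the textbook proof and is correct in outline.

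Your closing caveat is well placed, and in fact the hypothesis as literally written is too weak for the conclusion. Take $B_1=\{1<|z|<100\}$ and $B_2=\{\epsilon<|z-50|<2\}$ with $\epsilon>0$ small. Then $B_2\subset B_1$ and $B_1\setminus B_2$ is disconnected (the disk $\{|z-50|\le\epsilon\}$ is one component), yet $M(B_2)=\log(2/\epsilon)$ exceeds $M(B_1)=\log 100$ as soon as $\epsilon<1/50$. So ``$B_1-B_2$ not connected'' does not by itself force $B_2$ to separate the two boundary components of $B_1$, and the step in your argument where every $\gamma\in\Gamma_{B_1}$ is shown to contain a subarc in $\Gamma_{B_2}$ would fail in such examples. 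The intended hypothesis---and the one that actually holds in the paper's single application of the lemma, inside the proof of Lemma~\ref{l3}, where a round annulus is sandwiched between the two boundary components of a ring domain---is that $B_2$ separates the two complementary components of $B_1$ in $\widehat{\mathbb{C}}$. Under that reading your proof goes through without difficulty.
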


\begin{lemma}\label{l1}
Let $f$ be a quasiconformal mapping from one open domain $\Omega_1$ of $\mathbb{C}$ to another domain $\Omega_2$ in $\mathbb{C}$. Let $D$ be a ring domain with $\overline{D} \subset \Omega_1$. Then we have
\begin{equation}
{\textup{Mod}}(D)/K_f\leq {\textup{Mod}}(f(D))\leq K_f {\textup{Mod}}(D).\nonumber
\end{equation}
\end{lemma}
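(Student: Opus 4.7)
The plan is to reduce the statement to the classical Gr\"otzsch distortion inequality for quasiconformal mappings between standard annuli, by means of the conformal invariance of the module. Since $\overline{D} \subset \Omega_1$ and $f$ is a homeomorphism from $\Omega_1$ onto $\Omega_2$, the image $f(D)$ is again a ring domain in $\mathbb{C}$. By the very definition of module, there exist conformal maps $\varphi: D \to \{z: 1<|z|<R\}$ and $\psi: f(D) \to \{z: 1<|z|<R'\}$ with $M(D)=\log R$ and $M(f(D))=\log R'$.

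Next I would transport $f$ to these model annuli by setting $g = \psi \circ f \circ \varphi^{-1}$, a quasiconformal homeomorphism between the two standard annuli. Since $\varphi$ and $\psi$ are conformal, the chain rule (\ref{dila}) for Beltrami coefficients gives $|\mu_g(w)|=|\mu_f(\varphi^{-1}(w))|$ almost everywhere, hence $K_g = K_f$. Thus the general inequality reduces to the corresponding one for $g$ between standard annuli: if $g$ is a $K$-quasiconformal homeomorphism between annuli of moduli $\log R$ and $\log R'$, then $\log R' \leq K \log R$.

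This last inequality is the classical Gr\"otzsch inequality, proved via the length--area (extremal length) method: one tests the admissible conformal metric $\rho(z)=1/(|z|\log R)$ on the source annulus against the family of curves joining its two boundary circles, and exploits the fundamental fact that under a $K$-quasiconformal mapping the extremal length of any curve family is distorted by a factor between $1/K$ and $K$. Applying the same inequality to $g^{-1}$, which is also $K$-quasiconformal, yields $\log R \leq K \log R'$; together these give the two-sided bound $M(D)/K_f \leq M(f(D)) \leq K_f M(D)$.

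The technical heart is thus the Gr\"otzsch inequality for annuli; since this is a textbook result in the theory of planar quasiconformal mappings (see \cite{L, LV}), I would expect the author to simply invoke it rather than reprove the underlying length--area estimate. The only step that must be checked carefully, and which I have isolated above, is that conjugating $f$ by the two conformal uniformizers does not change its dilatation, a direct consequence of the Beltrami chain rule already recorded as (\ref{dila}).
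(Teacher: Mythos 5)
The paper states this lemma as a known textbook result (alongside Lemmas~\ref{l0}, \ref{l2}, \ref{l4}, \ref{l5}) and does not supply a proof of its own; the references \cite{L,LV} are cited for the block of quasiconformal preliminaries. Your argument is the standard proof of that textbook fact, and it is correct: uniformize $D$ and $f(D)$ conformally onto round annuli, note via the Beltrami chain rule~(\ref{dila}) that conjugation by conformal maps leaves $\|\mu\|_\infty$, hence $K$, unchanged, invoke Gr\"otzsch's inequality for a $K$-quasiconformal self-map of a round annulus to get the upper bound, and apply the same to $f^{-1}$ (which has the same dilatation) for the lower bound. One small point worth making explicit, though it causes no trouble here: since $\overline{D}\subset\Omega_1$ and the paper's definition of ring domain already forces $M(D)<\infty$, both $D$ and $f(D)$ are nondegenerate ring domains of finite module, so the two conformal uniformizers to bounded round annuli exist. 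In short, you have correctly reconstructed the standard proof of a result the paper takes for granted.
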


\begin{lemma}\label{l2}
Let $f$ be a quasiconformal mapping from $\widehat{\mathbb{C}}$ to itself with $f(\infty)=\infty$. Then, for any $r>0, z\in \mathbb{C}$, we have
\begin{equation}
\frac{\max\limits_{\theta\in [0, 2\pi)}|f(z+re^{i\theta})-f(z)|}{\min\limits_{\theta\in [0,2\pi)}|f(z+re^{i\theta})-f(z)|}\leq C(K_f).\nonumber
\end{equation}
\end{lemma}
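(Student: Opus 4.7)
The plan is to reduce the estimate to a modulus comparison for ring domains and apply the quasi-invariance of the modulus under quasiconformal maps (Lemma \ref{l1}) together with the classical Teichm\"uller extremal ring domain theorem.

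First, I would normalize the problem by setting $g(w):=f(z+rw)-f(z)$. Then $g$ is a $K_f$-quasiconformal self-map of $\widehat{\mathbb{C}}$ with $g(0)=0$ and $g(\infty)=\infty$, and the inequality to be established becomes
\[
\frac{\max_{|w|=1}|g(w)|}{\min_{|w|=1}|g(w)|}\le C(K_f).
\]
Write $M:=\max_{|w|=1}|g(w)|$ and $m:=\min_{|w|=1}|g(w)|$, attained at points $w_M,w_m$ on the unit circle; we may assume $M>m$.

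Next, I would introduce the round annulus $B:=\{w\in\mathbb{C}:\,m<|w|<M\}$, which has modulus $M(B)=\log(M/m)$, and pull it back to $D:=g^{-1}(B)$. The set $D$ is a ring domain whose two complementary components are the topological closed disk $E_0:=g^{-1}(\overline{\Delta(0,m)})$ (containing $0$ in its interior and $w_m$ on its boundary, so $|w_m|=1$ lies in $E_0$) and the topological closed disk $E_\infty:=g^{-1}(\{|w|\ge M\}\cup\{\infty\})$ (containing $\infty$ in its interior and $w_M$ on its boundary, so $|w_M|=1$ lies in $E_\infty$). Applying Lemma \ref{l1} to $g$ gives
\[
\log(M/m)=M(B)\le K_f\cdot M(D).
\]

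The crucial step is then to establish a universal bound $M(D)\le\Lambda$ independent of $f,z,r$. This is a purely geometric statement about ring domains in $\widehat{\mathbb{C}}$: the complementary continuum $E_0$ contains $0$ together with a point of modulus $1$, while $E_\infty$ contains $\infty$ together with a point of modulus $1$. By the classical Teichm\"uller extremal ring domain theorem (see e.g.\ Lehto--Virtanen \cite{LV}), after rotating so that $w_m$ is sent to $-1$, the modulus of any such separating ring domain is bounded above by the modulus $\Lambda$ of the extremal Teichm\"uller ring $\widehat{\mathbb{C}}\setminus([-1,0]\cup[1,\infty])$, a definite absolute constant. Combining this with the previous inequality yields $\log(M/m)\le K_f\Lambda$, hence $M/m\le e^{K_f\Lambda}=:C(K_f)$, as required.

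The main obstacle is the universal modulus bound $M(D)\le\Lambda$; once this classical fact about ring domains with prescribed complementary continua is invoked, the remainder of the argument is a routine application of the modulus quasi-invariance. An equivalent (but more black-box) route is to cite directly the quasisymmetry property of qc self-maps of $\widehat{\mathbb{C}}$ fixing $0$ and $\infty$, which is itself proved via precisely this Teichm\"uller-modulus estimate.
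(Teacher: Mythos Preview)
The paper does not supply its own proof of this lemma; it is listed among the preliminary facts on quasiconformal mappings drawn from the references \cite{L, LV}, and is used later as a black box. Your argument---normalizing to a $K_f$-quasiconformal map $g$ of $\widehat{\mathbb{C}}$ fixing $0$ and $\infty$, pulling back the round annulus $\{m<|w|<M\}$, and bounding the modulus of the preimage ring via the Teichm\"uller extremal ring---is exactly the classical proof of this quasisymmetry estimate as found in those sources, and it is correct.
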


Then we obtain that
\begin{lemma}\label{l3}
Let $f$ be a quasiconformal mapping from $\widehat{\mathbb{C}}$ to itself with $f(\infty)=\infty$.  Assume that $f$ maps one domain $\Omega_1$ of $\mathbb{C}$ to another domain $\Omega_2$ in $\mathbb{C}$. For $z\in \Omega_1$, let $0<r_1<r_2$ be such that $\Delta(z, r_2)\subset \Omega_1$.  Then we have
\begin{equation}\label{ll-3-0}
 \frac{\max\limits_{\theta\in [0,2\pi)}|f(z+r_2 e^{i\theta})-f(z)|}{\min\limits_{\theta\in [0,2\pi)}|f(z+r_1 e^{i\theta})-f(z)|}\leq C(K_f) \Big(\frac{r_2}{r_1}\Big)^{K_f}.
\end{equation}
\end{lemma}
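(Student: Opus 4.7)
The plan is to reduce the claim to two ingredients: a two-radius modulus estimate obtained from Lemma \ref{l1} and a single-radius circular distortion analogous to Lemma \ref{l2}. Normalizing $z = 0$ and $f(0) = 0$, write
$$a_i := \min_\theta |f(r_i e^{i\theta})|,\quad A_i := \max_\theta |f(r_i e^{i\theta})|,\quad \tilde A_i := \sup_{|v|\leq r_i}|f(v)|,$$
for $i = 1,2$. Since $A_2\leq \tilde A_2$, the target ratio is bounded by the telescoping product
$$\frac{A_2}{a_1} \leq \frac{\tilde A_2}{a_1} = \frac{\tilde A_2}{a_2}\cdot \frac{a_2}{\tilde A_1}\cdot \frac{\tilde A_1}{a_1}.$$

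For the middle factor I would apply Lemma \ref{l1} to the round annulus $B := \{r_1 < |w| < r_2\}$, giving $M(f(B)) \leq K_f\log(r_2/r_1)$. The bounded and unbounded complementary components of $f(B)$ are $f(\overline{\Delta(0,r_1)})$ (whose farthest-from-origin point is at distance $\tilde A_1$) and $\widehat{\mathbb{C}}\setminus f(\Delta(0,r_2))$ (whose closest-to-origin point is at distance $a_2$). Hence for every $\varepsilon > 0$ the round sub-annulus $\{\tilde A_1 + \varepsilon < |w| < a_2 - \varepsilon\}$ lies inside $f(B)$ and each of its boundary circles separates the two boundary components of $f(B)$; by Lemma \ref{l0}, its module is at most $M(f(B))$. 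Letting $\varepsilon \to 0$ yields $a_2/\tilde A_1 \leq (r_2/r_1)^{K_f}$.

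For the outer factors $\tilde A_i/a_i \leq C(K_f)$ --- the circular distortion on the circle $|w| = r_i$ --- I would run an analogous modulus argument from the interior side. For $0 < s < r_i$, Lemma \ref{l1} gives $M(f(\{s < |w| < r_i\})) \geq \log(r_i/s)/K_f$, while Teichm\"uller's classical upper modulus inequality applied to the same ring (whose complementary continua reach out to distance $\tilde A_s$ from $0$ and in to distance $a_i$ from $0$) provides an upper bound of the form $\log(a_i/\tilde A_s) + O(1)$. Combining and letting $s \to r_i^-$ yields $\tilde A_i \leq C(K_f)\, a_i$. Plugging the three factors together,
$$\frac{A_2}{a_1} \leq C(K_f)^2\Big(\frac{r_2}{r_1}\Big)^{K_f},$$
as required.

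The main obstacle is the single-radius step: it requires either Teichm\"uller's upper modulus inequality (classical but not among the paper's stated preliminaries) or, alternatively, a quasiconformal extension of $f|_{\Delta(0,r_2)}$ to a $K_f$-QC self-map of $\widehat{\mathbb{C}}$ fixing $\infty$ (by extending the Beltrami coefficient by zero and invoking the measurable Riemann mapping theorem) followed by direct application of Lemma \ref{l2}, with a Koebe-type estimate from Proposition \ref{pro-1} handling the conformal correction on $\Delta(0,r_2)$. Either route reduces to the same combinatorial assembly in the last display, and once the two-radius estimate and the single-radius bound are in hand, the conclusion is immediate.
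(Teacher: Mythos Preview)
Your argument is essentially the paper's. The paper also invokes Lemma~\ref{l2} to bound $\mathbf{M}_i/\mathbf{m}_i$ on each circle, then applies Lemmas~\ref{l0} and~\ref{l1} to the round sub-annulus $\{\mathbf{M}_1<|w-f(z)|<\mathbf{m}_2\}$ inside $f(\Delta(z,r_2))\setminus f(\overline{\Delta(z,r_1)})$ to obtain $\mathbf{m}_2/\mathbf{M}_1\le(r_2/r_1)^{K_f}$; the only difference is cosmetic---a case split on whether $\mathbf{m}_2>\mathbf{M}_1$ instead of your three-factor telescoping (and since the farthest image point of the closed disk lies on the image of its boundary circle, your $\tilde A_i$ equals $A_i=\mathbf{M}_i$, so the quantities match exactly).

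One remark: for the single-radius bound the paper simply cites Lemma~\ref{l2}, even though that lemma is stated only for global self-maps of $\widehat{\mathbb{C}}$ fixing $\infty$. You are being more scrupulous than the author here. In the paper's sole application of Lemma~\ref{l3} (the proof of Proposition~\ref{key}) the map $\mathbf{h}$ \emph{is} globally quasiconformal with $\mathbf{h}(\infty)=\infty$, so no issue arises in context; for the lemma as stated in full generality, either of your proposed fixes---extending the Beltrami coefficient by zero and invoking the measurable Riemann mapping theorem, or appealing to the Teichm\"uller modulus bound---does the job.
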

\begin{proof}
We set
$$\mathbf{M}_2=\max\limits_{\theta\in [0,2\pi)}|f(z+r_2 e^{i\theta})-f(z)|, \,\, \mathbf{m}_2=\min\limits_{\theta\in [0,2\pi)}|f(z+r_2 e^{i\theta})-f(z)|,$$
$$\mathbf{M}_1=\max\limits_{\theta\in [0,2\pi)}|f(z+r_1 e^{i\theta})-f(z)|, \,\, \mathbf{m}_1=\min\limits_{\theta\in [0,2\pi)}|f(z+r_1 e^{i\theta})-f(z)|.$$
From Lemma \ref{l2}, we know that there is a positive constant $C_1(K_f)$ such that
 \begin{equation}\label{ll-3}
\mathbf{M}_1 \leq C_1(K_f) \mathbf{m}_1,\,\, \mathbf{M}_2 \leq C_1(K_f) \mathbf{m}_2.
\end{equation}
If $\mathbf{m}_2 \leq C_1(K_f)\mathbf{m}_1$, then it is easy to see from (\ref{ll-3}) that (\ref{ll-3-0}) holds in this case.
On the other hand, if $\mathbf{m}_2>C_1(K_f)\mathbf{m}_1\geq \mathbf{M}_1$, then we see that the annulus $\mathbf{A}:=\Delta(f(z), \mathbf{m}_2))-\Delta(f(z), \mathbf{M}_1)$ is contained in the ring domain $\mathbf{R}:=f(\Delta(z,r_2))-f(\Delta(z,r_1))$.  Consequently, we obtain from Lemma \ref{l0} that
$${\textup{Mod}}(\mathbf{A})\leq {\textup{Mod}}(\mathbf{R}).$$
It follows from (\ref{ll-3}) again and Lemma \ref{l1} that
\begin{eqnarray}\log\frac{\mathbf{M}_2}{\mathbf{m}_1}&\leq&  \log C_2(K_f)\cdot \frac{\mathbf{m}_2}{\mathbf{M}_1}=\log C_2(K_f)+{\textup{Mod}}(\mathbf{A})\nonumber \\
&\leq&\log C_2(K_f)+{\textup{Mod}}(\mathbf{R})\leq\log C_2(K_f)+K_f \log \frac{r_2}{r_1}.\nonumber \end{eqnarray}
Here $C_2(K_f)=[C_1(K_f)]^2$. Then (\ref{ll-3-0}) follows and the lemma is proved.
\end{proof}
We will need the following result due to Mori.
\begin{lemma}\label{l4}
Let $f$ be a quasiconformal mapping from $\Delta$ to itself with $f(0)=0$.  Then we have
$$|f(z_1)-f(z_2)|\leq 16 |z_1-z_2|^{\frac{1}{K_f}}, z_1, z_2\in \overline{\Delta}.$$
\end{lemma}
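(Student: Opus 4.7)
The plan is to prove Mori's classical H\"older distortion estimate through ring-domain module comparisons. The first step is to extend $f$ to a quasiconformal self-map of the sphere by reflection across the unit circle: define $\tilde f(z) := 1/\overline{f(1/\bar z)}$ for $|z| > 1$. Since $f:\Delta \to \Delta$ is $K_f$-quasiconformal and fixes $0$, the extension $\tilde f : \widehat{\mathbb{C}} \to \widehat{\mathbb{C}}$ is $K_f$-quasiconformal, fixes $0$ and $\infty$, and leaves $\mathbb{T}$ invariant.

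Fix $z_1, z_2 \in \overline{\Delta}$ and write $w_i := f(z_i)$. One may assume $|z_1 - z_2|^{1/K_f} \leq 1/8$, since otherwise $|w_1 - w_2| \leq 2 \leq 16|z_1 - z_2|^{1/K_f}$ holds trivially from $w_i \in \overline{\Delta}$. The strategy is then to compare the moduli of two linked ring domains via $\tilde f$. I would choose a Teichm\"uller-type ring $R$ in $\widehat{\mathbb{C}}$ separating the continuum $[z_1,z_2]$ (the Euclidean segment) from a continuum $F \subset \widehat{\mathbb{C}} \setminus \overline{\Delta}$ containing $\infty$. Gr\"otzsch's extremal estimates for such rings give a sharp lower bound
$$M(R) \geq \log\frac{C_0}{|z_1 - z_2|}$$
with $C_0$ depending only on $F$, and the optimal choice of $F$ (an appropriate reflected continuum) yields $C_0 = 16$ in the limit.

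The ring $\tilde f(R)$ separates $\tilde f([z_1,z_2])$, a continuum containing $w_1$ and $w_2$, from $\tilde f(F) \subset \widehat{\mathbb{C}} \setminus \overline{\Delta}$. Lemma \ref{l1} gives $M(\tilde f(R)) \geq M(R)/K_f$, while the Gr\"otzsch upper module bound applied to $\tilde f(R)$ yields $M(\tilde f(R)) \leq \log(16/|w_1 - w_2|)$. Combining the two bounds produces
$$\log\frac{16}{|w_1 - w_2|} \;\geq\; \frac{1}{K_f}\log\frac{16}{|z_1 - z_2|},$$
which rearranges to $|w_1 - w_2| \leq 16 |z_1 - z_2|^{1/K_f}$.

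The main obstacle is obtaining the \emph{sharp} constant $16$ (independent of $K_f$): this requires the precise Gr\"otzsch extremal module identity and a careful limiting choice of the separating continuum $F$, together with verifying that the reflection $\tilde f$ preserves the symmetry used in the extremal configuration. A non-sharp variant with some constant $C(K_f)$ in place of $16$ would follow almost immediately from Lemma \ref{l3} applied to two concentric disks around $z_1$; the substantive content of Mori's theorem lies precisely in the absolute constant.
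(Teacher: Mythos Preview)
The paper does not prove this lemma at all: it is simply stated as ``the following result due to Mori'' and then used. There is therefore no proof in the paper to compare your proposal against.

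Your outline is the standard module-theoretic route to Mori's theorem (reflection to the sphere, then Gr\"otzsch/Teichm\"uller ring estimates), and the overall structure is sound. That said, the sketch is vague at exactly the point you yourself flag as the substantive content: the sharp constant $16$. You do not specify the continuum $F$ or the ring $R$, and the asserted bounds $M(R) \geq \log(16/|z_1-z_2|)$ and $M(\tilde f(R)) \leq \log(16/|w_1-w_2|)$ are not justified; in the classical proof the constant arises from the inequality $\mu(r) \leq \log(4/r)$ for the Gr\"otzsch modulus function applied twice (once in each direction), not from a single ``optimal choice of $F$ in the limit.'' If you intend to supply an actual proof rather than cite the result as the paper does, you would need to make the extremal-ring step precise.
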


\begin{lemma}\label{l5}
Let $f$ belong to $\mathcal{S}_b$ and admit a quasiconformal extension to $\widehat{\mathbb{C}}$ with $f(\infty)=\infty$.  Then we have
$$C_1(K_f)(1-|z|^2)^{\|\mu_f\|_{\infty}}\leq  |f'(z)|\leq C_2(K_f) (1-|z|^2)^{-\|\mu_f\|_{\infty}}, \,\, z\in \Delta.$$
\end{lemma}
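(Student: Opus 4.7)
My plan is to reduce the derivative estimate, via Proposition \ref{pro-1} and Remark \ref{re-2}, to a two-sided H\"older estimate for $|f(z)-f(e^{i\arg z})|$, and then to obtain that estimate by combining the conformality of $f$ on $\Delta$ with the quasiconformal distortion lemmas for the extension of $f$ to $\widehat{\mathbb{C}}$.

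Since $f$ is univalent in $\Delta$, Proposition \ref{pro-1} together with Remark \ref{re-2} gives
$$(1-|z|^{2})|f'(z)|\,\asymp\,|f(z)-f(e^{i\arg z})|$$
with absolute constants, and since $|z-e^{i\arg z}|=1-|z|$ the lemma reduces to proving the two-sided bound
$$C_1(K_f)^{-1}(1-|z|)^{1+\|\mu_f\|_{\infty}}\;\leq\;|f(z)-f(e^{i\arg z})|\;\leq\;C_2(K_f)(1-|z|)^{1-\|\mu_f\|_{\infty}}.$$

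For the upper inequality I would apply Lemma \ref{l3} to the quasiconformal map $f:\widehat{\mathbb{C}}\to\widehat{\mathbb{C}}$ at the center $z$, with inner radius $r_{1}=(1-|z|)/2$ (kept strictly inside $\Delta$, where $f$ is conformal) and outer radius $r_{2}=1-|z|$ (so that $e^{i\arg z}$ lies on the outer circle). On the inner disk the Koebe distortion theorem and the conformality of $f$ give $\min_{\theta}|f(z+r_1 e^{i\theta})-f(z)|\asymp r_1|f'(z)|$, while Lemma \ref{l2} yields $|f(z)-f(e^{i\arg z})|\asymp_{K_f}\max_{\theta}|f(z+r_2 e^{i\theta})-f(z)|$. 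To replace the coarser exponent $K_f$ coming from Lemma \ref{l3} by the sharper $\|\mu_f\|_{\infty}$, I would refine the modulus argument behind Lemma \ref{l3} by decomposing the ring $\Delta(z,r_2)\setminus\overline{\Delta(z,r_1)}$ into its intersection with $\Delta$ (where $f$ is conformal and Lemma \ref{l1} preserves the modulus) and its intersection with $\Delta^{*}$ (where only the essential supremum of $\mu_f$ enters the modulus distortion). Adding the two contributions gives the exponent $\|\mu_f\|_{\infty}$ and hence the upper bound. The lower bound is obtained symmetrically, by running the same argument for $f^{-1}$, which has the same dilatation, fixes $\infty$, and is conformal on $f(\Delta)$.

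The main obstacle is precisely the refinement just indicated: a naive invocation of Lemma \ref{l3} only produces the exponent $K_f$, and extracting $\|\mu_f\|_{\infty}$ requires isolating the conformal part of the ring (where no dilatation is incurred) from the genuinely quasiconformal part and tracking the two modulus contributions separately. Once this refinement is in place, combining the resulting H\"older bounds with the reduction via Proposition \ref{pro-1} and Remark \ref{re-2} yields the lemma.
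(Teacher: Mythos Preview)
The paper does not prove this lemma; it is quoted as a known estimate (essentially the Becker--Pommerenke result, cf.\ \cite{BP}). So there is no ``paper's proof'' to compare with, and your attempt has to be judged on its own. Unfortunately it contains a genuine gap.

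Your argument is circular. You first use Proposition~\ref{pro-1} and Remark~\ref{re-2} to reduce the derivative bound to an estimate on $|f(z)-f(e^{i\arg z})|$; then, in the modulus step, you bound $|f(z)-f(e^{i\arg z})|$ by $\max_{\theta}|f(z+r_2e^{i\theta})-f(z)|$ and compare this via Lemma~\ref{l3} to $\min_{\theta}|f(z+r_1e^{i\theta})-f(z)|\asymp r_1|f'(z)|$. With your choice $r_1=(1-|z|)/2$, $r_2=1-|z|$ the ratio $r_2/r_1=2$ is fixed, so Lemma~\ref{l3} yields only a constant, and you end up with $|f(z)-f(e^{i\arg z})|\le C(K_f)(1-|z|)|f'(z)|$, which is exactly the reduction you started from. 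No power of $(1-|z|)$ is produced. Moreover, your ``refinement'' (splitting the annulus into its $\Delta$- and $\Delta^{*}$-parts) is vacuous here: since $\Delta(z,1-|z|)\subset\Delta$, the entire annulus lies in the conformal region and there is nothing to split.

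Even if you repair the radii so that the annulus genuinely straddles $\mathbb{T}$ (say $r_1=1-|z|$ and $r_2$ of unit size), a modulus/distortion argument of Lemma~\ref{l3} type can only produce the H\"older exponent $K_f=(1+\|\mu_f\|_\infty)/(1-\|\mu_f\|_\infty)$, not the sharp exponent $\|\mu_f\|_\infty$ claimed in the lemma; the pieces $A\cap\Delta$ and $A\cap\Delta^{*}$ are not ring domains, so their moduli do not add, and the heuristic ``conformal part contributes no dilatation'' does not translate into the exponent you want. The classical route to the sharp exponent is different: one bounds the pre-Schwarzian directly via the Cauchy--Pompeiu representation (in the spirit of the proof of Proposition~\ref{key}, but with the identity in place of $f$ and your $f$ in place of $\mathbf h$), obtaining a pointwise estimate of the form $|N_f(z)|(1-|z|^2)\le C\,\|\mu_f\|_\infty$, and then integrates radially exactly as in Lemma~\ref{key-est} to get $|f'(z)|\le C(K_f)(1-|z|^2)^{-\|\mu_f\|_\infty}$. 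That is the missing idea.
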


\begin{remark}\label{remark}
From Lemma \ref{l5}, for $z=|z|e^{i\arg z} \in \Delta$, we have
\begin{eqnarray}
|f(z)-f(e^{i \arg z})| &=& |\int_{z}^{e^{i \arg z}} f'(w)dw|  \nonumber \\
&=& |\int_{|z|}^1 f'(t e^{i \arg z})e^{i\arg z}dt| \nonumber  \\
& \leq & C_3(K_f) \int_{|z|}^1 (1-|t|^2)^{-\|\mu_f\|_{\infty}} dt \nonumber \\
&\leq & C_4(K_f) (1-|z|^2)^{\frac{2}{1+K_f}}. \nonumber
\end{eqnarray}
Here the first integral is taken on the radial path from $z$ to $e^{i\arg z}$.
\end{remark}

We next establish the following auxiliary result, which plays an important role in our later arguments and which also generalizes some related known results in \cite{BP} and \cite{Dyn}.
\begin{proposition}\label{key}
Let $f$ belong to $\mathcal{S}_b$ and admit a quasiconformal extension to $\widehat{\mathbb{C}}$ with $f(\infty)=\infty$. Let $\mathbf{h}$ be a bounded univalent function in $f(\Delta)$ with $\mathbf{h}(0)=\mathbf{h}'(0)-1=0$. We assume that $\mathbf{h}$ admits a quasiconformal extension to $\widehat{\mathbb{C}}$ with $\mathbf{h}(\infty)=\infty$ and the boundary dilatation $b(\mathbf{h})$ of $\mathbf{h}$ satisfies that  $3b(\mathbf{h})<1-\|\mu_f\|_{\infty}$.  Then, for any $0<\varepsilon <\frac{1}{3}(1-\|\mu_f\|_{\infty})-b(\mathbf{h})$, there is a constant $\delta>0$ such that
$$ |N_{\mathbf{h}}(\zeta)|{\textup{dist}}(\zeta, f(\mathbb{T}))<C(f, \mathbf{h})[ b(\mathbf{h})+\varepsilon],$$
for all  $\zeta\in f(\Delta)$ with ${\textup{dist}}(\zeta, f(\mathbb{T}))<\delta$.
\end{proposition}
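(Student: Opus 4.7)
The strategy is to transfer the estimate at $\zeta$ to one at the origin via a Koebe-type rescaling of $\mathbf{h}$, and then to bound the rescaled Pre-Schwarzian through an Ahlfors--Bers-type integral representation localized near $\mathbb{T}$. By the definition of $b(\mathbf{h})$, for the prescribed $\varepsilon$ there exists $\delta_1>0$ such that $|\mu_{\mathbf{h}}(w)|\leq b(\mathbf{h})+\varepsilon$ for every $w\in f(\Delta)^{*}$ with $\mathrm{dist}(w,\partial f(\Delta))<\delta_1$. Fix $\zeta\in f(\Delta)$ with $d:=\mathrm{dist}(\zeta,\partial f(\mathbb{T}))$ small, let $z_0:=f^{-1}(\zeta)$, and set $\lambda:=(1-|z_0|^2)f'(z_0)$, so Proposition \ref{pro-1} gives $d\leq|\lambda|\leq 4d$. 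Form the Koebe-type rescaling
$$\tilde{\mathbf{h}}(u):=\frac{\mathbf{h}(\zeta+\lambda u)-\mathbf{h}(\zeta)}{\lambda\,\mathbf{h}'(\zeta)}.$$
A direct computation gives $\tilde{\mathbf{h}}(0)=0$, $\tilde{\mathbf{h}}'(0)=1$, and $N_{\tilde{\mathbf{h}}}(0)=\lambda\,N_{\mathbf{h}}(\zeta)$, so the conclusion is equivalent to $|N_{\tilde{\mathbf{h}}}(0)|\leq C(f,\mathbf{h})\,[b(\mathbf{h})+\varepsilon]$.

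The rescaled domain $\tilde{\Omega}:=(f(\Delta)-\zeta)/\lambda$ equals $\mathbf{K}_f(\Delta)$ and so contains $\Delta(0,1/4)$ by Koebe's one-quarter theorem. Hence $\tilde{\mathbf{h}}$ is conformal on $\Delta(0,1/4)$ and extends quasiconformally to $\widehat{\mathbb{C}}$ with $|\mu_{\tilde{\mathbf{h}}}(u)|=|\mu_{\mathbf{h}}(\zeta+\lambda u)|$. For any $R>1/4$ and $|u|\leq R$,
$$\mathrm{dist}(\zeta+\lambda u,\partial f(\Delta))\leq d+|\lambda|R\leq(1+4R)\,d<\delta_1$$
provided $d<\delta_1/(1+4R)$, which forces $|\mu_{\tilde{\mathbf{h}}}(u)|\leq b(\mathbf{h})+\varepsilon$ throughout $\Delta(0,R)$. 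After the affine change of variables $v:=4u$, $G(v):=4\tilde{\mathbf{h}}(v/4)$, I obtain a normalized conformal map $G$ on $\Delta$ with $N_G(0)=N_{\tilde{\mathbf{h}}}(0)/4$, whose Beltrami coefficient has modulus at most $b(\mathbf{h})+\varepsilon$ on $\{1<|w|<4R\}$ and at most $\|\mu_{\mathbf{h}}\|_{\infty}$ outside.

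I would then invoke the Ahlfors--Bers integral representation for the second Taylor coefficient $a_2=N_G(0)/2$ of a normalized conformal map on $\Delta$ with quasiconformal extension supported on $\Delta^{*}$, yielding
$$|N_G(0)|\;\lesssim\;\iint_{\Delta^{*}}\frac{|\mu_G(w)|}{|w|^{3}}\,dA(w)$$
with a multiplicative constant depending only on $\|\mu_G\|_{\infty}\leq\|\mu_{\mathbf{h}}\|_{\infty}<1$ (absorbing the nonlinear-in-$\mu$ tail). Splitting the integral into the annulus $\{1<|w|<4R\}$ and its exterior produces
$$|N_G(0)|\leq C_1\,[b(\mathbf{h})+\varepsilon]+\frac{C_2\,\|\mu_{\mathbf{h}}\|_{\infty}}{R};$$
choosing $R\asymp\|\mu_{\mathbf{h}}\|_{\infty}/(b(\mathbf{h})+\varepsilon)$ balances the two and yields $|N_G(0)|\leq C\,[b(\mathbf{h})+\varepsilon]$, hence the proposition with $\delta:=\delta_1/(1+4R)$.

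The hard part will be justifying the Ahlfors--Bers representation with a multiplicative constant uniform in $\|\mu_G\|_{\infty}$, since $\|\mu_G\|_{\infty}$ is only known to be less than $1$ (not small). The hypothesis $3b(\mathbf{h})<1-\|\mu_f\|_{\infty}$ should enter to ensure that the combined Beltrami coefficients arising through (\ref{dila}), notably $\mu_{\mathbf{h}\circ f}$, remain bounded away from $1$ with a margin governed by the Hölder exponent $1-\|\mu_f\|_{\infty}$ of $f$ on $\mathbb{T}$ supplied by Lemma \ref{l5} and Remark \ref{remark}, and that the three successive distortion comparisons in the proof (Proposition \ref{pro-1}, the boundary-distance inequality above, and the Ahlfors--Bers tail) are each absorbable by that exponent. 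The argument specialises to those of \cite{BP} and \cite{Dyn} when $f=\mathrm{id}$.
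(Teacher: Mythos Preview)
Your Koebe rescaling is a natural and correct first step, and the reduction to bounding $|N_G(0)|$ for a normalized map $G\in\mathcal{S}$ whose Beltrami coefficient is small on a large annulus is the right reformulation. The gap lies in the step you yourself flag: the inequality
\[
|N_G(0)|\;\lesssim\;\iint_{\Delta^{*}}\frac{|\mu_G(w)|}{|w|^{3}}\,dA(w)
\]
with constant depending only on $\|\mu_G\|_\infty$ is not a standard result, and the Pompeiu--Cauchy--Green formula does not deliver it. Differentiating Pompeiu's formula twice gives
\[
|G''(0)|\le (\text{boundary term})+\frac{2}{\pi}\iint\frac{|\mu_G(w)|\,|\partial G(w)|}{|w|^{3}}\,dA(w),
\]
and the factor $|\partial G|$ cannot be ``absorbed into the nonlinear tail'': when $\|\mu_G\|_\infty$ is close to $1$ the map $G$ may have large distortion and $|\partial G|$ is only in $L^p$ for $p$ slightly above $2$. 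Your crude splitting into annulus and exterior, followed by a choice of $R$, does not control this factor, and in fact a single application of Cauchy--Schwarz on the whole annulus produces an area term of order $R^{K_{\mathbf{h}}}$ which swamps the gain.

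The paper confronts exactly this $|\partial \mathbf{h}|$ obstruction, but without rescaling. It applies Pompeiu's formula to $\mathbf{h}$ on a disk $\Delta(\zeta,\mathbf{r})$ of \emph{fixed} radius $\mathbf{r}\asymp\mathbf{d}$ (so the boundary term is a harmless constant $8\mathbf{M}/\mathbf{d}^2$), decomposes the area integral into dyadic annuli $\{2^k\mathbf{d}_0<|w-\zeta|<2^{k+1}\mathbf{d}_0\}$, and on each uses Cauchy--Schwarz together with the bound $[\mathrm{Area}(\mathbf{h}(\Delta(\zeta,2^{k+1}\mathbf{d}_0)))]^{1/2}\le C\,2^{(k+1)K'}|\mathbf{h}'(\zeta)|\mathbf{d}_0$ coming from Lemmas~\ref{l2}--\ref{l3}. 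Here $K'=(1+b(\mathbf{h})+\varepsilon)/(1-b(\mathbf{h})-\varepsilon)$ is the \emph{local} dilatation, available because every dyadic disk lies in the region where $|\mu_{\mathbf{h}}|\le b(\mathbf{h})+\varepsilon$; the resulting series $\sum_k 2^{k(K'-2)}$ converges precisely when $b(\mathbf{h})+\varepsilon<1/3$. That is the first place the hypothesis enters. The second is a separate Claim that $\mathbf{d}_0/|\mathbf{h}'(\zeta)|\to0$ as $\mathbf{d}_0\to0$, needed to kill the constant boundary term after dividing by $|\mathbf{h}'(\zeta)|$; this is proved via Mori's inequality (Lemma~\ref{l4}) and Lemma~\ref{l5}, and it is here that the sharper condition $b(\mathbf{h})+\varepsilon<\tfrac{1}{3}(1-\|\mu_f\|_\infty)$ is used. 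Your account of the hypothesis through $\mu_{\mathbf{h}\circ f}$ and ``three successive distortion comparisons'' matches neither mechanism: $\|\mu_f\|_\infty$ enters not through a composed Beltrami coefficient but through the H\"older exponent of $f$ needed to show the boundary contribution vanishes.
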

\begin{remark}
In Proposition \ref{key}, the restricted condition $3b(\mathbf{h})<1-\|\mu_f\|_{\infty}$ may be not the best, but it is enough for this paper.
\end{remark}

\subsection{Proof of Proposition \ref{key}}
For any $0<\varepsilon <\frac{1}{3}(1-\|\mu_f\|_{\infty})-b(\mathbf{h})$,  in view of the definition of $b(\mathbf{h})$, we can find two numbers $R_1\in (0,1)$, $R_2\in (1,2)$ such that
\begin{equation}|\mu_{\mathbf{h}}(\zeta)|<b(\mathbf{h})+\varepsilon,\,\,\, a.e. \,\, \zeta \in f(\Delta(R_2))- f(\Delta),\nonumber \end{equation}
and
\begin{equation}
{\textup{dist}}(f(\mathbb{T}_1), f(\mathbb{T}))= {\textup{dist}}(f(\mathbb{T}_2), f(\mathbb{T})):=\mathbf{d}. \nonumber
\end{equation}
Here $\mathbb{T}_j=\partial \Delta(R_j), j=1,2.$

We take $\delta_1= \frac{\mathbf{d}}{2^{10}}.$  Then, for any $\zeta\in f(\Delta)$ with ${\textup{dist}}(\zeta, f(\mathbb{T}))<\delta_1$, there is a point $\zeta_0\in f(\mathbb{T})$ such that
${\textup{dist}}(\zeta, f(\mathbb{T}))=|\zeta_0-\zeta|:=\mathbf{d}_0.$ We let $\mathbf{m}\in \mathbb{N}$ be the biggest number such that
\begin{equation}
\Delta(\zeta, 2^{\mathbf{m}+1}\mathbf{d}_0) \subset  f(\Delta(R_2))-f(\Delta(R_1)). \nonumber
\end{equation}
It is easy to see that $2^{\mathbf{m}+1}\mathbf{d}_0 \geq  \frac{1}{2}\mathbf{d}$. By Pompeiu's formula, we have
\begin{equation}\label{pom}\mathbf{h}(\zeta)=\frac{1}{2\pi i}\oint_{\mathcal{C}} \frac{{\mathbf{h}}(w)}{w-\zeta}dw-\frac{1}{\pi}\iint_{\Delta(\zeta, \mathbf{r})}\frac{\bar{\partial} \mathbf{h}(w)}{w-\zeta}dudv.\end{equation}
Here $\mathcal{C}=\partial{\Delta}(\zeta, \mathbf{r})$ is a circle and $\mathbf{r}=2^{\mathbf{m}+1}\mathbf{d}_0\geq\frac{1}{2}\mathbf{d}$.

We take $\mathbf{M}=\max\limits_{\zeta\in f(\overline{\Delta(2)})}|\mathbf{h}(\zeta)|.$ Differentiating twice on the both sides of  (\ref{pom}), we get that
\begin{eqnarray}\label{key-1}|{\mathbf{h}}''(\zeta)| &\leq &  \frac{1}{\pi}\Big |\oint_{\mathcal{C}} \frac{\mathbf{h}(w)}{(w-\zeta)^3}dw\Big |+\frac{2}{\pi}\Big|\iint_{\Delta(\zeta, \mathbf{r})}\frac{\bar{\partial} \mathbf{h}(w)}{(w-\zeta)^3}dudv\Big| \\
&\leq & \frac{2\mathbf{M}}{{\mathbf{r}}^2} + \frac{2}{\pi}\iint_{\Delta(\zeta, \mathbf{r})-f(\Delta)}\frac{|\bar{\partial} \mathbf{h}(w)|}{|w-\zeta|^3}dudv \nonumber \\
&\leq & \frac{8\mathbf{M}}{{\mathbf{d}}^2}+\frac{2}{\pi}\iint_{\Delta(\zeta, \mathbf{r})-f(\Delta)}\frac{|\bar{\partial} \mathbf{h}(w)|}{|w-\zeta|^3}dudv. \nonumber
\end{eqnarray}
Note that
\begin{eqnarray}
\lefteqn{\iint_{\Delta(\zeta, \mathbf{r})-f(\Delta)}\frac{|\bar{\partial} \mathbf{h}(w)|}{|w-\zeta|^3}dudv} \nonumber \\&& = \sum_{k=0}^{{\mathbf{m}}}\iint_{\Delta(\zeta, 2^{k+1}\mathbf{d}_0)-\Delta(\zeta, 2^k \mathbf{d}_0)}\frac{|\bar{\partial}\mathbf{h}(w)|}{|w-\zeta|^3}dudv:=\sum_{k=0}^{{\mathbf{m}}} \mathbf{I}_k.  \nonumber
\end{eqnarray}
On the one hand, for any integral $k\in [0, {\mathbf{m}}]$, we have
\begin{eqnarray}
\lefteqn{\mathbf{I}_k=\iint_{\Delta(\zeta, 2^{k+1}\mathbf{d}_0)-\Delta(\zeta, 2^k \mathbf{d}_0)}\frac{|\bar{\partial} \mathbf{h}(w)|}{|w-\zeta|^3}dudv} \nonumber\\&& \leq \frac{1}{[2^k\mathbf{d}_0]^3}
\iint_{\Delta(\zeta, 2^{k+1}\mathbf{d}_0)}|\bar{\partial}  \mathbf{h}(w)|dudv \nonumber \\
&&  =\frac{1}{[2^k\mathbf{d}_0]^3}
\iint_{\Delta(\zeta, 2^{k+1}\mathbf{d}_0)} |{\partial} \mathbf{h}(w)|\cdot |\mu_{ \mathbf{h}}(w)|dudv \nonumber \\
&& =  \frac{1}{[2^k\mathbf{d}_0]^3}
\iint_{\Delta(\zeta, 2^{k+1}\mathbf{d}_0)} \left[\frac{J_{\mathbf{h}}(w)}{1-|\mu_{\mathbf{h}}(w)|^2}\right]^{\frac{1}{2}}\cdot |\mu_{ \mathbf{h}}(w)|dudv. \nonumber
\end{eqnarray}
Here $J_{\mathbf{h}}$ is the Jacobian of $\mathbf{h}$. It follows from Cauchy-Schwarz's inequality that
 \begin{eqnarray}\label{Ik}
\mathbf{I}_k &\leq&  \frac{1}{[2^k\mathbf{d}_0]^3}
 \Big(\iint_{\Delta(\zeta, 2^{k+1}\mathbf{d}_0)}\frac{|\mu_{ \mathbf{h}}(w)|^2}{1-|\mu_{\mathbf{h}}(w)|^2}dudv\Big)^{\frac{1}{2}}
 \\
&& \times \Big(\iint_{\Delta(\zeta, 2^{k+1}\mathbf{d}_0)} J_{\mathbf{h}}(w) dudv\Big)^{\frac{1}{2}} \nonumber  \\
 & \leq & \frac{b(\mathbf{h})+\varepsilon}{\sqrt{1-[b(\mathbf{h})+\varepsilon]^2}}\frac{2\sqrt{\pi}}{[2^k\mathbf{d}_0]^2}
  \Big[{\text {Area}}(\mathbf{h}(\Delta(\zeta, 2^{k+1}\mathbf{d}_0)))\Big]^{\frac{1}{2}}.\nonumber
\end{eqnarray}
Here and later, the notation ${\text{Area}}(E)$ denotes the Lebesgue measure of set $E$ in the complex plane.  On the other hand, by Lemma \ref{l2}, we have
\begin{equation}\label{ar-1}
[{\text {Area}}(\mathbf{h}(\Delta(\zeta, 2^{k+1}\mathbf{d}_0)))]^{\frac{1}{2}}\leq C_1(\mathbf{h}) \max\limits_{\theta\in [0, 2\pi)}|\mathbf{h}(\zeta+2^{k+1}\mathbf{d}_0e^{i\theta})-\mathbf{h}(\zeta)|,
\end{equation}
and
\begin{equation}\label{ar-2}
[{\text {Area}}(\mathbf{h}(\Delta(\zeta, \mathbf{d}_0)))]^{\frac{1}{2}}\geq C_2(\mathbf{h}) \min\limits_{\theta\in [0, 2\pi)}|\mathbf{h}(\zeta+\mathbf{d}_0e^{i\theta})-\mathbf{h}(\zeta)|.
\end{equation}
By Lemma \ref{l3}, we have
\begin{equation}\label{ar-3}
\frac{ \max\limits_{\theta\in [0, 2\pi)}|\mathbf{h}(\zeta+2^{k+1}\mathbf{d}_0e^{i\theta})-\mathbf{h}(\zeta)|}{\min\limits_{\theta\in [0, 2\pi)}|\mathbf{h}(\zeta+\mathbf{d}_0e^{i\theta})-\mathbf{h}(\zeta)|}
\leq C_3(\mathbf{h})(2^{k+1})^{\frac{1+b(\mathbf{h})+\varepsilon}{1-b(\mathbf{h})-\varepsilon}}.
\end{equation}
Then, combining (\ref{ar-1})-(\ref{ar-3}), we obtain that
\begin{equation}
\Big[\frac{{\text {Area}}(\mathbf{h}(\Delta(\zeta, 2^{k+1}\mathbf{d}_0)))}
{{\text {Area}}(\mathbf{h}(\Delta(\zeta, \mathbf{d}_0)))}\Big]^{\frac{1}{2}} \leq C_4(\mathbf{h}) 2^{(k+1)\frac{1+b(\mathbf{h})+\varepsilon}{1-b(\mathbf{h})-\varepsilon}}. \nonumber
\end{equation}
Since
$$ {\text {Area}}(\mathbf{h}(\Delta(\zeta, \mathbf{d}_0))) \leq C_5(\mathbf{h}) [{\textup{dist}}(\mathbf{h}(\zeta), \partial \mathbf{h}(\Delta(\zeta, \mathbf{d}_0)))]^2, $$
and
\begin{equation}
{\textup{dist}}(\mathbf{h}(\zeta), \partial {\mathbf{h}}(\Delta(\zeta, \mathbf{d}_0))) \leq 4 |{\mathbf{h}}'(\zeta)| {\textup{dist}}(\zeta, f(\mathbb{T}))=4 |{\mathbf{h}}'(\zeta)|\mathbf{d}_0, \nonumber
\end{equation}
by Remark \ref{re-1}. Consequently, we have
\begin{equation}
[{\text {Area}}(\mathbf{h}(\Delta(\zeta, 2^{k+1}\mathbf{d}_0)))]^{\frac{1}{2}} \leq  C_6(\mathbf{h}) 2^{(k+1) \frac{1+b(\mathbf{h})+\varepsilon}{1-b(\mathbf{h})-\varepsilon}}\cdot |{\mathbf{h}}'(\zeta)|\mathbf{d}_0.
\nonumber
\end{equation}
Since $\varepsilon<\frac{1}{3}(1-\|\mu_f\|_{\infty})-b(\mathbf{h})$ we have $b(\mathbf{h})+\varepsilon < \frac{1}{3}(1-\|\mu_f\|_{\infty})$. Then it follows from (\ref{Ik}) that
\begin{eqnarray}\label{xiugai-1}
\mathbf{I}_k & \leq & C_6(\mathbf{h}) \frac{b(\mathbf{h})+\varepsilon}{\sqrt{1-[b(\mathbf{h})+\varepsilon]^2}}\frac{2\sqrt{\pi}}{[2^k\mathbf{d}_0]^2} 2^{(k+1) \frac{1+b(\mathbf{h})+\varepsilon}{1-b(\mathbf{h})-\varepsilon}}\cdot |{\mathbf{h}}'(\zeta)|\mathbf{d}_0\nonumber \\
&\leq & 2\sqrt{\pi} C_6(\mathbf{h}) \frac{b(\mathbf{h})+\varepsilon}{\sqrt{1-\frac{1}{9}(1-\|\mu_f\|_{\infty})^2}}\cdot  2^{(k+1) \frac{1+b(\mathbf{h})+\varepsilon}{1-b(\mathbf{h})-\varepsilon}-2k}
\cdot \frac{|{\mathbf{h}}'(\zeta)|}{\mathbf{d}_0}\nonumber \\
&\leq & C_7(f, \mathbf{h}) \frac{b(\mathbf{h})+\varepsilon}{2^{k\frac{1-3[b(\mathbf{h})+\varepsilon]}{1-[b(\mathbf{h})+\varepsilon]}}}\cdot \frac{|{\mathbf{h}}'(\zeta)|}{\mathbf{d}_0}. \nonumber
\end{eqnarray}
Therefore, we have
\begin{eqnarray}
\lefteqn{\iint_{\Delta(\zeta, \mathbf{r})-f(\Delta)}\frac{|\bar{\partial} \mathbf{h}(w)|}{|w-\zeta|^3}dudv=\sum_{k=0}^{{\mathbf{m}}} \mathbf{I}_k} \nonumber  \\
 &&\leq C_7(f, \mathbf{h})[b(\mathbf{h})+\varepsilon] \frac{|\mathbf{h}'(\zeta)|}{\mathbf{d}_0}\cdot   \sum_{k=0}^{\infty} \frac{1}{2^{k\frac{1-3[b(\mathbf{h})+\varepsilon]}{1-[b(\mathbf{h})+\varepsilon]}}}\leq  C_{8}(f, \mathbf{h})[b(\mathbf{h})+\varepsilon] \frac{|{\mathbf{h}}'(\zeta)|}{\mathbf{d}_0}, \nonumber
\end{eqnarray}
since $\varepsilon< \frac{1}{3}(1-\|\mu_f\|_{\infty})-b(\mathbf{h})$ so that $$1-3[b(\mathbf{h})+\varepsilon]>1-3b(\mathbf{h})-[1-\|\mu_f\|_{\infty}-3b(\mathbf{h})]=\|\mu_f\|_{\infty}\geq 0.$$
Then, it follows from (\ref{key-1}) that
\begin{eqnarray}\label{key-2}|{\mathbf{h}}''(\zeta)| \leq  \frac{8\mathbf{M}}{{\mathbf{d}}^2}+C_{9}(f, \mathbf{h})[b(\mathbf{h})+\varepsilon]\frac{|{\mathbf{h}}'(\zeta)|}{\mathbf{d}_0}. \nonumber
\end{eqnarray}
Furthermore we obtain that
\begin{eqnarray}\label{int-l}
|N_{\mathbf{h}}(\zeta)|{\textup{dist}}(\zeta, f(\mathbb{T}))&=&\frac{|{\mathbf{h}}''(\zeta)|}{|\mathbf{h}'(\zeta)|}\mathbf{d}_0 \\
& \leq &  \frac{8\mathbf{M}}{{\mathbf{d}}^2}\frac{\mathbf{d}_0}{|\mathbf{h}'(\zeta)|}+C_{9}(f, \mathbf{h})[b(\mathbf{h})+\varepsilon]. \nonumber
\end{eqnarray}

To continue the proof, we need the following claim.
\begin{claim}
We have
\begin{equation}\label{cla-00}
\frac{\mathbf{d}_0}{|{\mathbf{h}}'(\zeta)|}=\frac{{\textup{dist}}(\zeta, f(\mathbb{T}))}{|{\mathbf{h}}'(\zeta)|} \rightarrow 0, \,\, {\text {as}}\,\,\,  {\textup{dist}}(\zeta, f(\mathbb{T})) \rightarrow 0.
\end{equation}
\end{claim}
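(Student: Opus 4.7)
The plan is to reduce (\ref{cla-00}) to a H\"older-type lower bound on $|\mathbf{h}(\zeta)-\mathbf{h}(\zeta_{*})|$, where $\zeta=f(z)$ and $\zeta_{*}=f(e^{i\arg z})\in f(\mathbb{T})$ is the radial boundary point associated with $z$. From $|\mathbf{h}'(\zeta)|=|(\mathbf{h}\circ f)'(z)|/|f'(z)|$ together with Remark \ref{re-2} applied both to $f$ and to the bounded univalent function $\mathbf{h}\circ f\colon \Delta\to\mathbf{h}(f(\Delta))$ (the image being a Jordan domain since $\mathbf{h}$ is a homeomorphism of $\widehat{\mathbb{C}}$), one gets $(1-|z|^2)|f'(z)|\asymp \mathbf{d}_0$ and $(1-|z|^2)|(\mathbf{h}\circ f)'(z)|\asymp |\mathbf{h}(\zeta)-\mathbf{h}(\zeta_{*})|$. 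Dividing these yields
\[
\frac{\mathbf{d}_0}{|\mathbf{h}'(\zeta)|}\asymp\frac{\mathbf{d}_0^2}{|\mathbf{h}(\zeta)-\mathbf{h}(\zeta_{*})|},
\]
so it suffices to prove $|\mathbf{h}(\zeta)-\mathbf{h}(\zeta_{*})|\geq c\,\mathbf{d}_0^{\alpha}$ for some exponent $\alpha<2$.

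To produce such a H\"older bound I will exploit that $\mathbf{h}$ restricted to the open set $f(\Delta(R_2))$ is quasiconformal with Beltrami coefficient of modulus at most $b(\mathbf{h})+\varepsilon$ almost everywhere (zero on the conformal part $f(\Delta)$, and bounded by $b(\mathbf{h})+\varepsilon$ on the ring $f(\Delta(R_2))-f(\Delta)$ by the choice of $R_2$), so its dilatation on this domain is at most $K_{b+\varepsilon}:=(1+b(\mathbf{h})+\varepsilon)/(1-b(\mathbf{h})-\varepsilon)$. Since $\zeta_{*}\in f(\mathbb{T})$ lies at distance at least $\mathbf{d}$ from $\partial f(\Delta(R_2))=f(\mathbb{T}_2)$, Lemma \ref{l3} applies to $\mathbf{h}|_{f(\Delta(R_2))}$ centered at $\zeta_{*}$ with inner radius $r_1=|\zeta-\zeta_{*}|$ and a fixed outer radius $r_2=\mathbf{d}/2$. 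Because $\zeta$ lies on the inner circle, $|\mathbf{h}(\zeta)-\mathbf{h}(\zeta_{*})|$ dominates the inner minimum in Lemma \ref{l3}; the outer maximum $\max_{\theta}|\mathbf{h}(\zeta_{*}+r_2 e^{i\theta})-\mathbf{h}(\zeta_{*})|$ is bounded below by a positive constant uniformly over $\zeta_{*}\in f(\mathbb{T})$, by continuity of $\mathbf{h}$ and compactness of $f(\mathbb{T})$. Rearranging Lemma \ref{l3} and using $|\zeta-\zeta_{*}|\asymp \mathbf{d}_0$ then yields $|\mathbf{h}(\zeta)-\mathbf{h}(\zeta_{*})|\geq C(f,\mathbf{h})\,\mathbf{d}_0^{K_{b+\varepsilon}}$.

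Substituting this bound into the reduction gives $\mathbf{d}_0/|\mathbf{h}'(\zeta)|\leq C(f,\mathbf{h})\,\mathbf{d}_0^{2-K_{b+\varepsilon}}$. The standing assumption $\varepsilon<\tfrac{1}{3}(1-\|\mu_f\|_{\infty})-b(\mathbf{h})$ forces $b(\mathbf{h})+\varepsilon<\tfrac{1}{3}$, hence $K_{b+\varepsilon}<2$, and the right-hand side tends to zero as $\mathbf{d}_0\to 0$, which establishes (\ref{cla-00}).

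The main obstacle is securing the H\"older exponent strictly below $2$: the global dilatation of $\mathbf{h}$ may be arbitrarily close to $1$, and a crude global Mori-type bound would only deliver an exponent that is potentially much larger than $2$ and hence useless here. Bypassing this forces the localization to $f(\Delta(R_2))$, where the dilatation is governed by the small boundary quantity $b(\mathbf{h})$; the arithmetic condition $3b(\mathbf{h})<1-\|\mu_f\|_{\infty}$ in the hypothesis is exactly what then secures $K_{b+\varepsilon}<2$.
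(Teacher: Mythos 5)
Your proof is correct, and it takes a genuinely different---and somewhat tighter---route than the paper's. The paper writes $\mathbf{d}_0/|\mathbf{h}'(\zeta)| \leq 4(1-|z|^2)^{1-\|\mu_f\|_\infty}\cdot|f'(z)|/|g'(z)|$, invoking Lemma \ref{l5} so that $\|\mu_f\|_\infty$ enters the exponent, then controls $|f'|/|g'|$ by conformally normalizing $f(\Delta(R_2))$ and $g(\Delta(R_2))$ onto $\Delta$, applying Mori's theorem (Lemma \ref{l4}) to the quasiconformal self-map $\pi_1\circ\mathbf{h}^{-1}\circ\pi_2^{-1}$ of $\Delta$, and combining with Remark \ref{re-2} and Remark \ref{remark}; the final positivity check on the exponent $1-\|\mu_f\|_\infty-\tfrac{2}{1+K_g}(1-\tfrac{1}{K_0})$ is where the hypothesis $\varepsilon<\tfrac13(1-\|\mu_f\|_\infty)-b(\mathbf{h})$ is used. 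Your argument dispenses with Lemmas \ref{l4} and \ref{l5} and with the conformal normalizations: using Remark \ref{re-2} symmetrically for $f$ and $g=\mathbf{h}\circ f$ (both of which admit qc extensions, so the two-sided comparability there is legitimate), you reduce the claim to a H\"older lower bound $|\mathbf{h}(\zeta)-\mathbf{h}(\zeta_*)|\gtrsim\mathbf{d}_0^{\alpha}$ with $\alpha<2$, and obtain it directly from Lemma \ref{l3} applied to $\mathbf{h}$ localized on $f(\Delta(R_2))$, exactly the same localization the paper itself exploits when deriving (\ref{ar-3}). This also makes visible that the positivity condition you actually need, $b(\mathbf{h})+\varepsilon<1/3$, is weaker than what the hypothesis delivers, whereas the paper's computation uses $\|\mu_f\|_\infty$ in a way that your argument shows to be avoidable for this claim. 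Two small points to make explicit when writing it up: when Lemma \ref{l3} is applied to the restriction of $\mathbf{h}$, the multiplicative constant depends on the global dilatation of $\mathbf{h}$ (Lemma \ref{l2} concerns maps of $\widehat{\mathbb{C}}$) while the exponent is governed by the local dilatation on $f(\Delta(R_2))$---this is consistent with the paper's own use of Lemma \ref{l3}; and you should note that once $\mathbf{d}_0$ is small enough one has $r_1=|\zeta-\zeta_*|\asymp\mathbf{d}_0<\mathbf{d}/2=r_2$, so the hypothesis $r_1<r_2$ of Lemma \ref{l3} holds.
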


\begin{proof}[Proof of the Claim]
Let $\zeta=f(z)$, $g(z)=\mathbf{h}\circ f(z)={\mathbf{h}}(\zeta)$. It is easy to see that $g(0)=g'(0)-1=0$. Then from Lemma \ref{l5} we have
\begin{equation}
|f'(z)|=|({\mathbf{h}}^{-1} \circ g(z))'|=|({\mathbf{h}}^{-1})'\circ g(z)|\cdot |g'(z)|=\frac{|g'(z)|}{|{\mathbf{h}}'(\zeta)|}\leq C_{10}(K_f)(1-|z|^2)^{-\|\mu_{f}\|_{\infty}}.  \nonumber
\end{equation}
Consequently, we have
\begin{eqnarray}\label{cla-0}
\frac{{\textup{dist}}(\zeta, f(\mathbb{T}))}{|{\mathbf{h}}'(\zeta)|} &\leq & 4 \frac{(1-|z|^2)|f'(z)|}{|{\mathbf{h}}'(\zeta)|} \\
& \leq &  4C_{10}(K_f) (1-|z|^2)^{1-\|\mu_f\|_{\infty}} \cdot \frac{|f'(z)|}{|g'(z)|}.\nonumber
\end{eqnarray}

Now let $\pi_1$ be a conformal mapping from $f(\Delta(R_2))$ into $\Delta$ with $\pi_1(0)=0$ and $\pi_2$ be a conformal mapping from $g(\Delta(R_2))$ into $\Delta$ with $\pi_2(0)=0$ . Since $f(\overline{\Delta})$ is contained in  $f(\Delta(R_2))$, we know that $\pi_1$ is a bi-Lipschitz mapping from $f(\overline{\Delta})$ to its image.  Then we know that
\begin{equation}\label{cla-1} |{\pi_1}(f(z_1))-{\pi_1}(f(z_2))| \asymp  |f(z_1)-f(z_2)|,\end{equation}
for any two different points $z_1, z_2$ in $\overline{\Delta}$. Similarly, we have
\begin{equation}\label{cla-2}  |{\pi_2}(g(z_1))-{\pi_2}(g(z_2))|\asymp |g(z_1)-g(z_2)|,\end{equation}
for any two different points $z_1, z_2$ in $\overline{\Delta}$. On the other hand, since $\pi_1 \circ {\mathbf{h}}^{-1} \circ {\pi_2}^{-1}$ is a quasiconformal mapping from $\Delta$ to itself fixing the origin.  We see from Lemma \ref{l4} that
\begin{eqnarray}\label{cla-3}
\lefteqn{|\pi_1 \circ {\mathbf{h}}^{-1} \circ {\pi_2}^{-1}\circ {\pi_2}\circ g(z_1)-\pi_1 \circ {\mathbf{h}}^{-1}  \circ {\pi_2}^{-1}\circ {\pi_2}\circ g(z_2)|}\\
&&\quad\quad= |\pi_1 \circ f(z_1)-\pi_1 \circ f(z_2)| \nonumber \\
&&\quad\quad \leq 16 |{\pi_2}\circ g(z_1)-{\pi_2}\circ g(z_2)|^{\frac{1}{{K_0}}},\,\, z_1, z_2 \in \overline{\Delta}. \nonumber
\end{eqnarray}
Here ${K}_0$ is the dilatation of the quasiconformal mapping $\pi_1 \circ {\mathbf{h}}^{-1} \circ {\pi_2}^{-1}$. It is easy to see that
\begin{equation}\label{K}
\frac{1-b(\mathbf{h})-\varepsilon}{1+b(\mathbf{h})+\varepsilon}\leq \frac{1}{{K_0}}\leq 1. \end{equation}
Then, we obtain from (\ref{cla-1})-(\ref{cla-3}) that
\begin{equation}\label{cla-eq}
 |f(z_1)-f(z_2)| \leq C_{11}(\pi_1, \pi_2) |g(z_1)-g(z_2)|^{\frac{1}{{K_0}}}, \,\, z_1, z_2\in \overline{\Delta}.
\end{equation}
Meanwhile, we know by Proposition \ref{pro-1} and Remark \ref{re-2} that
$$(1-|z|^2)|f'(z)| \asymp {\textup{dist}}(f(z), f(\mathbb{T}))\asymp |f(z)-f(e^{i\arg{z}})|, $$
$$(1-|z|^2)|g'(z)| \asymp {\textup{dist}}(g(z), g(\mathbb{T}))\asymp |g(z)-g(e^{i\arg{z}})|, $$
and by Remark \ref{remark} that
\begin{equation}\label{add} |g(z)-g(e^{i\arg z})| \leq C_{12}(g)(1-|z|^2)^{\frac{2}{1+K_{g}}}.\end{equation}
It follows from (\ref{cla-eq}) and (\ref{add}) that
\begin{equation}
\frac{|f'(z)|}{|g'(z)|} \asymp \frac{ |f(z)-f(e^{i\arg{z}})|}{|g(z)-g(e^{i\arg{z}})|}\leq C_{13}(f, \mathbf{h}) (1-|z|^2)^{-\frac{2}{1+K_g} \cdot (1-\frac{1}{{K_0}})},\,\, z\in \Delta. \nonumber
\end{equation}
Thus, from (\ref{cla-0}), we get that
\begin{eqnarray}\label{cla-l}
\frac{{\textup{dist}}(\zeta, f(\mathbb{T}))}{|\mathbf{h}'(\zeta)|}  \leq  C_{14}(f, \mathbf{h}) (1-|z|^2)^{1-\|\mu_f\|_{\infty}-\frac{2}{1+K_g}\cdot (1-\frac{1}{K_0})}.
\end{eqnarray}
From (\ref{K}), we note that
\begin{eqnarray}\lefteqn{1-\|\mu_f\|_{\infty}-\frac{2}{1+K_g}\cdot (1-\frac{1}{{K_0}}) \geq 1-\|\mu_f\|_{\infty}-(1-\frac{1}{{K_0}}) }\nonumber \\
&&= \frac{1}{{K_0}}-\|\mu_f\|_{\infty}  \geq  \frac{(1-\|\mu_f\|_{\infty})-(1+\|\mu_f\|_{\infty})[b(\mathbf{h})+\varepsilon]}{1+b(\mathbf{h})+\varepsilon}>0,\nonumber  \end{eqnarray}
since $\varepsilon< \frac{1}{3}(1-\|\mu_f\|_{\infty})-b(\mathbf{h})$ so that
$$(1+\|\mu_f\|_{\infty})[b(\mathbf{h})+\varepsilon]<\frac{2}{3}(1-\|\mu_f\|_{\infty})<1-\|\mu_f\|_{\infty}.$$
Then it follows from (\ref{cla-l}) that
\begin{equation}
\frac{{\textup{dist}}(\zeta, f(\mathbb{T}))}{|\mathbf{h}'(\zeta)|} \rightarrow 0, \,\, {\text {as}}\,\,\,  {\text {dist}}(\zeta, f(\mathbb{T})) \rightarrow 0, \nonumber
\end{equation}
since ${\textup{dist}}(\zeta, f(\mathbb{T})) \rightarrow 0$ is equivalent to $|z| \rightarrow 1^{-}$.  The claim is proved.
 \end{proof}

We proceed to prove Proposition \ref{key}.  From (\ref{int-l}) and (\ref{cla-00}) and their proof, we see that, for any $\varepsilon\in (0,  \frac{1}{3}(1-\|\mu_f\|_{\infty})-b(\mathbf{h}))$,  we can find a constant  $\delta>0$ such that
\begin{eqnarray}
|N_{\mathbf{h}}(\zeta)|{\textup{dist}}(\zeta, f(\mathbb{T})) \leq C(f, \mathbf{h})[b(\mathbf{h})+\varepsilon],\nonumber
\end{eqnarray}
for all $\zeta\in f(\Delta)$ with ${\textup{dist}}(\zeta, f(\mathbb{T}))<\delta$.  Now, we finish the proof of  Proposition \ref{key}.

\section{{\bf Integral means spectrum functionals on Teichm\"uller spaces}}
 In this section, we first recall the definitions of universal Teichm\"uller space and the  universal  asymptotic Teichm\"uller space and then restate some main theorems of this paper. For the references about the Teichm\"uller spaces, see \cite{GN, EGN-1, EMS, EGN-2, L, Zhur, AG1}.

We use $\mathcal{M}(\Delta^{*})$ to denote the open unit ball of the Banach space $\mathcal{L}^{\infty}({\Delta}^{*})$ of essentially bounded measurable functions in ${\Delta}^{*}$. For $\mu \in \mathcal{M}({\Delta}^{*})$, let $f_{\mu}$ be the quasiconformal mapping in the extended complex plane $\widehat{\mathbb{C}}$ with complex dilatation equal to $\mu$ in $\Delta^{*}$, equal to $0$ in $\Delta$, normalized $f_{\mu}(0)=0, \, f'_{\mu}(0)=1, \, f_{\mu}(\infty)=\infty$.  We say two elements $\mu$ and $\nu$ in $\mathcal{M}(\Delta^{*})$ are equivalent, denoted by $\mu\sim \nu$, if $f_{\mu}|_{\Delta}=f_{\nu}|_{\Delta}$. The equivalence class of $\mu$  is denoted by $[\mu]_{T}$. Then $T=\mathcal{M}(\Delta^{*})/\sim$ is one model of the universal Teichm\"uller space.

The Teichm\"uller distance $d_T([\mu]_{T}, [\nu]_{T})$ of two points $[\mu]_{T}$, $[\nu]_{T}$ in $T$ is defined as
\begin{eqnarray}
\lefteqn{d_T([\mu]_{T}, [\nu]_{T})=\frac{1}{2}\inf  \bigg\{ \log \frac{1+\|(\mu_1-\nu_1)/(1-\overline{\nu_1}\mu_1)\|_{\infty}}{1-\|(\mu_1-\nu_1)/(1-\overline{\nu_1}\mu_1)\|_{\infty}},}
\nonumber \\
&&\quad\quad\qquad\qquad\qquad\qquad\qquad\qquad [\mu_1]_{T}=[\mu]_{T}, [\nu_1]_{T}=[\nu]_{T}  \bigg\}. \nonumber
\end{eqnarray}

We say $\mu$ and $\nu$ in $\mathcal{M}({\Delta}^{*})$ are asymptotically equivalent if there exists some $\widetilde{\nu}$ such that $\widetilde{\nu}$ and $\nu$ are equivalent and $\widetilde{\nu}(z)-\mu(z) \rightarrow 0$ as $|z|\rightarrow 1^{+}$. The asymptotic equivalence
of $\mu$ will be denoted by $[\mu]_{AT}$. The {\em universal asymptotic Teichm\"uller space} $AT$ is the set of
all the asymptotic equivalence classes $[\mu]_{AT}$ of elements $\mu$ in $\mathcal{M}({\Delta}^{*})$. The Teichm\"uller distance $d_{AT}([\mu]_{AT}, [\nu]_{AT})$ of two points $[\mu]_{AT}$, $[\nu]_{AT}$ in $AT$ is defined as
\begin{eqnarray}
\lefteqn{d_{AT}([\mu]_{AT}, [\nu]_{AT})=\frac{1}{2}\inf  \bigg\{ \log \frac{1+H[(\mu_1-\nu_1)/(1-\overline{\nu_1}\mu_1)]}{1-H[(\mu_1-\nu_1)/(1-\overline{\nu_1}\mu_1)]},}
\nonumber \\
&&\quad\quad\qquad\qquad\qquad\qquad\qquad\qquad [\mu_1]_{AT}=[\mu]_{AT}, [\nu_1]_{AT}=[\nu]_{AT)}  \bigg\}. \nonumber
\end{eqnarray}
Here,
\begin{equation}\label{bn}
H[\mu]=\inf\{\|\mu|_{\Delta^{*}-E}\|_{\infty}: \, E {\text { is a compact set in}}\,\,\Delta^{*}\}.\end{equation}
\begin{remark}We can check from (\ref{qua}) that $B_b(t)=\sup\limits_{[\mu]_T\in T} \beta_{f_{\mu}}(t)$ for each $t\in \mathbb{R}$.
\end{remark}
We set
$$\Lambda_1: [\mu]_T  \mapsto  N_{f_{\mu}}, \,\, \Lambda_2: [\mu]_T  \mapsto  S_{f_{\mu}}.$$
The mapping $\Lambda_2$ is known as the {\em Bers mapping}. We call $\Lambda_1$ the {\em Pre-Bers mapping}.  It is well known that

\begin{proposition}\label{bers-2}
The mapping $\Lambda_2: [\mu]_T \mapsto  S_{f_\mu}$ from $(T, d_T)$  to its image $T_2$ in $E_2$ is a homeomorphism.
\end{proposition}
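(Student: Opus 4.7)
The plan is to verify four things in sequence: well-definedness, injectivity, continuity of $\Lambda_2$, and continuity of its inverse. Well-definedness is immediate from the definition of the equivalence $\sim$ used to build $T$: if $[\mu]_T=[\nu]_T$ then by definition $f_\mu|_\Delta\equiv f_\nu|_\Delta$, hence $S_{f_\mu}\equiv S_{f_\nu}$ on $\Delta$. For injectivity I would invoke the classical identity theorem for the Schwarzian derivative: $S_{f_\mu}\equiv S_{f_\nu}$ on $\Delta$ forces $f_\mu|_\Delta$ and $f_\nu|_\Delta$ to differ by post-composition with a M\"obius transformation. The normalizations $f_\mu(0)=f_\nu(0)=0$, $f_\mu'(0)=f_\nu'(0)=1$, together with the fact that both quasiconformal extensions fix $\infty$, then pin this M\"obius map to the identity, so $f_\mu|_\Delta\equiv f_\nu|_\Delta$ and $[\mu]_T=[\nu]_T$.

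For continuity of $\Lambda_2$, assume $d_T([\mu_n]_T,[\mu]_T)\to 0$. The defining infimum of $d_T$ lets me pick representatives with $\|\mu_n-\mu\|_\infty\to 0$. By the Ahlfors-Bers measurable Riemann mapping theorem, the maps $f_{\mu_n}$ converge to $f_\mu$ locally uniformly on $\Delta$, together with all holomorphic derivatives, so $S_{f_{\mu_n}}(z)\to S_{f_\mu}(z)$ pointwise on $\Delta$. To upgrade this to convergence in the norm of $E_2$, I would invoke the Bers integral representation of the Schwarzian as a bounded linear operator from $L^\infty(\Delta^*)$ into $E_2$ (see \cite{L}), yielding $\|S_{f_{\mu_n}}-S_{f_\mu}\|_{E_2}\leq C\,\|\mu_n-\mu\|_\infty\to 0$.

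Continuity of $\Lambda_2^{-1}$ I would obtain from the Ahlfors-Weill reflection. For $\phi\in E_2$ with $\|\phi\|_{E_2}<2$, the formula
\begin{equation*}
\nu_\phi(z):=-\tfrac{1}{2}(|z|^2-1)^2\,\phi(1/\bar z)\,\bar z^{-4},\qquad z\in\Delta^*,
\end{equation*}
defines an element of $M(\Delta^*)$, and the corresponding $f_{\nu_\phi}$ has Schwarzian $\phi$ on $\Delta$. The assignment $\phi\mapsto[\nu_\phi]_T$ is manifestly continuous from an open ball in $E_2$ into $(T,d_T)$, giving a continuous local section of $\Lambda_2$ near the basepoint. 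To construct such a section near an arbitrary point of $T_2$, I would pre-compose $f_{\nu_\phi}$ with a fixed quasiconformal base-change, and conclude with the Bers-Nehari openness of $T_2$ in $E_2$. Continuity of $\Lambda_2^{-1}$ on $T_2$ then follows.

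The main obstacle is the $E_2$-norm convergence in the forward direction. Pointwise convergence of the Schwarzians is immediate from Ahlfors-Bers, but the Bers norm demands uniform control of $(1-|z|^2)^2\,|S_{f_{\mu_n}}(z)-S_{f_\mu}(z)|$ as $|z|\to 1$. The universal bound $\|S_f\|_{E_2}\leq 6$ only gives $\leq 12$ by the triangle inequality on an annular collar, which is not small; hence a split-domain Cauchy-estimate argument does not suffice, and the Bers integral kernel is the essential device, since it translates $L^\infty$-closeness of Beltrami coefficients directly into $E_2$-closeness of Schwarzians.
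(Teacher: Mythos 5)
The paper itself does not prove Proposition~\ref{bers-2}: it is stated as a known result (the classical Bers embedding theorem), with the burden deferred to the Teichm\"uller theory references \cite{L}, \cite{GN}, etc. So there is no paper proof to compare against, and the question reduces to whether your blind proof is correct.

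Your overall architecture (well-definedness, injectivity via the Schwarzian uniqueness plus the three normalizing conditions, forward continuity via Ahlfors--Bers, inverse continuity via Ahlfors--Weill plus a base-change to move off the origin) is the standard outline and is essentially right. The well-definedness, injectivity, and Ahlfors--Weill steps are fine in outline, including the explicit Beltrami coefficient $\nu_\phi(z)=-\tfrac12(|z|^2-1)^2\phi(1/\bar z)\bar z^{-4}$ on $\Delta^*$ when $\|\phi\|_{E_2}<2$.

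There is, however, a genuine error in the forward-continuity step. You invoke ``the Bers integral representation of the Schwarzian as a \emph{bounded linear} operator from $L^\infty(\Delta^*)$ into $E_2$'' to deduce $\|S_{f_{\mu_n}}-S_{f_\mu}\|_{E_2}\le C\|\mu_n-\mu\|_\infty$. No such linear operator exists: the map $\mu\mapsto S_{f_\mu}$ is nonlinear in $\mu$. The bounded linear integral operator you are thinking of,
\[
\mu \longmapsto -\frac{6}{\pi}\iint_{\Delta^*}\frac{\mu(\zeta)}{(\zeta-z)^4}\,d\xi\,d\eta ,
\]
is only the \emph{derivative} of the Bers map at $\mu=0$, not the map itself. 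The correct fact to cite is that the composite $M(\Delta^*)\ni\mu\mapsto S_{f_\mu}\in E_2$ is holomorphic as a map between Banach spaces (this is worked out in \cite{L}, Chapter~V), and a holomorphic map on an open ball is locally Lipschitz on every strictly smaller ball. Since $\mu_n\to\mu$ in $L^\infty$ eventually confines all $\mu_n$ to some ball $\|\cdot\|_\infty\le k_0<1$, one gets $\|S_{f_{\mu_n}}-S_{f_\mu}\|_{E_2}\le C(k_0)\,\|\mu_n-\mu\|_\infty\to 0$. Your conclusion is achievable, but the stated mechanism is wrong, and the distinction matters: linearity would make the Bers embedding a linear isomorphism onto its image, which it is not. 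Your own remark at the end that the ``Bers integral kernel'' is the essential device should therefore be replaced by ``Banach holomorphy of the Bers map and the local Lipschitz estimate it supplies.''
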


We also have  
\begin{proposition}\label{bers-1}
The mapping $\Lambda_1: [\mu]_T \mapsto  N_{f_{\mu}}$ from $(T, d_{T})$ to its image $T_1$ in $E_1$  is a homeomorphism.
\end{proposition}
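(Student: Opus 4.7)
My plan has three steps — well-definedness and injectivity, continuity of $\Lambda_1$, and continuity of the inverse — and I expect the last to be the main obstacle. Well-definedness is immediate: if $[\mu]_T = [\nu]_T$ then $f_\mu|_\Delta = f_\nu|_\Delta$ by definition, so $N_{f_\mu} = N_{f_\nu}$. For injectivity, the Claim in the Remark after Proposition \ref{pro-2} forces $f_\mu = a f_\nu + b$ on $\Delta$ whenever $N_{f_\mu} = N_{f_\nu}$, and the normalizations $f(0) = 0$, $f'(0) = 1$ give $a = 1$, $b = 0$, so $f_\mu|_\Delta = f_\nu|_\Delta$ and $[\mu]_T = [\nu]_T$.

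\textbf{Continuity of $\Lambda_1$.} Given $d_T([\mu_n]_T, [\mu]_T) \to 0$, I select representatives (still denoted $\mu_n, \mu$) so that the Beltrami coefficient of $f_{\mu_n} \circ f_\mu^{-1}$, computed via (\ref{dila}), has $L^\infty$-norm tending to zero. The Ahlfors--Bers dependence theorem then gives $f_{\mu_n} \to f_\mu$ uniformly on $\widehat{\mathbb{C}}$, whence $N_{f_{\mu_n}} \to N_{f_\mu}$ locally uniformly on $\Delta$. To upgrade this to convergence in the $E_1$-norm I exploit the identity $(1-|z|^2)N_f(z) = 2\bar z + \mathbf{K}_f''(0)$, where $\mathbf{K}_f \in \mathcal{S}$ is the Koebe transform from Remark \ref{re-2}: uniform convergence of $f_{\mu_n}$ on $\widehat{\mathbb{C}}$ combined with the universal distortion bounds on the compact family $\mathcal{S}$ yields uniform-in-$z$ convergence of the second Taylor coefficient $\mathbf{K}_{f_{\mu_n}}''(0) \to \mathbf{K}_{f_\mu}''(0)$, and hence $\|N_{f_{\mu_n}} - N_{f_\mu}\|_{E_1} \to 0$.

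\textbf{Continuity of $\Lambda_1^{-1}$ and the main obstacle.} Suppose $N_{f_{\mu_n}} \to N_{f_\mu}$ in $E_1$. Any subsequence of $\{f_{\mu_n}|_\Delta\} \subset \mathcal{S}$ is a normal family, so admits a further subsequence converging locally uniformly on $\Delta$ to some $g \in \mathcal{S}$; passing to the limit in the Pre-Schwarzians exactly as in the proof that $\mathbf{N}$ is closed in $E_1$ gives $N_g = N_{f_\mu}$, whence $g = f_\mu|_\Delta$ by the injectivity established above, and since every subsequence has this limit, $f_{\mu_n}|_\Delta \to f_\mu|_\Delta$ locally uniformly on $\Delta$. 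To convert this conformal-side convergence into $d_T$-smallness I would construct a Pre-Schwarzian analogue of the Ahlfors--Weill section: for $\phi \in E_1$ in a small neighborhood of $N_{f_\mu}$, build $\mu(\phi) \in M(\Delta^*)$ depending continuously on $\phi$ in $\|\cdot\|_\infty$ and satisfying $N_{f_{\mu(\phi)}} = \phi$; applied to $\phi_n = N_{f_{\mu_n}}$ this produces representatives of $[\mu_n]_T$ at $d_T$-distance $o(1)$ from $[\mu]_T$. The hard step is precisely this section, because the normal-family argument controls only the conformal restriction while $d_T$ is an infimum over Beltrami coefficients on $\Delta^*$. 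In the Schwarzian setting this obstacle is resolved by the Ahlfors--Weill reflection formula; for the Pre-Schwarzian one may either imitate that construction, use a Becker-type quasidisk extension when $\|\phi - N_{f_\mu}\|_{E_1}$ is small, or route the argument through the continuous map $N_f \mapsto S_f = N_f' - \tfrac{1}{2} N_f^2$ from $E_1$ into $E_2$ and invoke the classical Bers homeomorphism for $\Lambda_2$.
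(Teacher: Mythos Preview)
The paper does not supply its own proof of this proposition: it is introduced with ``It is known that'' and rests on the references cited at the opening of Section~3 (notably Zhuravlev \cite{Zhur} and Astala--Gehring \cite{AG1}). There is therefore no paper-side argument to compare your proposal against directly.

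Your outline is a reasonable reconstruction of how such a proof goes. Well-definedness and injectivity are exactly as you say, and for continuity of $\Lambda_1^{-1}$ your third route is clean and correct: writing $\Psi(\phi)=\phi'-\tfrac12\phi^2$, a Cauchy estimate shows $\Psi:E_1\to E_2$ is continuous, and since $\Lambda_2=\Psi\circ\Lambda_1$ one gets $\Lambda_1^{-1}=\Lambda_2^{-1}\circ\Psi$, continuous by Proposition~\ref{bers-2}. The Becker-extension alternative you mention is essentially what the cited literature does to produce the local section.

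The one step that would need more work is your argument for continuity of $\Lambda_1$ itself. The identity $(1-|z|^2)N_f(z)=2\bar z+\mathbf{K}_f''(0)$ is correct (up to a unimodular factor coming from the paper's use of $|f'(z)|$ rather than $f'(z)$ in the denominator of $\mathbf{K}_f$), but the assertion that uniform spherical convergence $f_{\mu_n}\to f_\mu$ on $\widehat{\mathbb C}$ forces $\mathbf{K}_{f_{\mu_n}}''(0)\to\mathbf{K}_{f_\mu}''(0)$ \emph{uniformly in the base point $z\in\Delta$} is not automatic: as $|z|\to1$ the Koebe transform rescales by the degenerating factor $(1-|z|^2)f'(z)$, so a small error in $f$ can in principle be magnified. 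The references handle this instead through the holomorphic dependence of $\log f_\mu'$ on $\mu$ (an integral representation), which yields continuity---indeed holomorphy---of $\mu\mapsto N_{f_\mu}$ from $M(\Delta^*)$ into $E_1$ directly.
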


\begin{remark}
This proposition seems to be known in the literature. For the convenience of readers, we will present a detailed proof for this result. 
\end{remark}

\begin{proof}[Proof of Proposition \ref{bers-1}]
It is obvious that $\Lambda_1$ is bijective. We will first show that $\Lambda_1$ is continuous. For $\mu, \nu \in \mathcal{M}(\Delta^*)$. We may assume that $\|\mu-\nu\|_{\infty}>0$, and let $\mathbf{f}=f_{\nu}\circ f_{\mu}^{-1}$.  Then we see that $\mathbf{f}(0)=\mathbf{f}'(0)-1=0, \mathbf{f}(\infty)=\infty$ and from (\ref{dila}) that 
\begin{equation}
\mu_{\mathbf{f}}\circ f_{\mu}(z) =\frac{1}{\chi}\frac{\nu(z)-\mu(z)}{1-\overline{\mu(z)}\nu(z)},\,\, \chi=\frac{\overline{\partial f_{\mu}}}{\partial f_{\mu}}, \,\,\, z\in \Delta.\nonumber
\end{equation}
Note that $\|\mu_{\mathbf{f}}\|_{\infty}>0$. From \cite{AB}, we know that there is a unique quasiconformal mapping $f(z, t)$ from $\widehat{\mathbb{C}}$ to itself such that $f(0, t)=f_{z}(0, t)-1=0,f(\infty, t)=\infty$ and 
$f_{\bar{z}}(z, t)=t\mu_{\mathbf{f}}f_z(z, t)$
for each $t \in \mathcal{D}_{\mathbf{f}}=\{t: |t|<1/\|\mu_{\mathbf{f}}\|_{\infty}\}.$ 

For each $t\in \mathcal{D}_{\mathbf{f}}$, we see that $f(z, t)$ is conformal in $f_{\mu}(\Delta)$.  For fixed $z\in f_{\mu}(\Delta)$, the function 
$\Phi(t)=N_{f(z, t)}(z) \rho_{f_{\mu}(\Delta)}^{-1}(z)$ is holomorphic in $\mathcal{D}_{\mathbf{f}}$. 

We see from \cite{O} that $|\Phi(t)|\leq 8$. Note that $f(z, 0)=z$ so that $\Phi(0)=0$, then by using Schwarz's lemma, we obtain that $|\Phi(t)|\leq 8t\|\mu_{\mathbf{f}}\|_{\infty}$ for $t\in \mathcal{D}_{\mathbf{f}}$. On the other hand, from $f(z, 1)=\mathbf{f}(z)$, we have
$|N_{\mathbf{f}}(z)|\rho_{f_{\mu}(\Delta)}^{-1}(z) \leq 8 \|\mu_{\mathbf{f}}\|_{\infty}$ for all $z\in f_{\mu}(\Delta)$. 
It follows from the fact $\mathbf{f}=f_{\nu}\circ f_{\mu}^{-1}$ and (\ref{chain}) that 
$$\|N_{f_{\mu}}(z)-N_{f_{\nu}}(z)\|_{E_1}=\sup\limits_{z\in f_{\mu}(\Delta)}|N_{\mathbf{f}}(z)|\rho_{f_{\mu}(\Delta)}^{-1}(z) \leq 8 \|(\nu(z)-\mu(z))/(1-\overline{\mu(z)}\nu(z))\|_{\infty}.$$
Consequently, we obtain that
\begin{eqnarray}\|N_{f_{\mu}}(z)-N_{f_{\nu}}(z)\|_{E_1} &\leq & 8 \inf\limits_{\mu_1 \sim \mu, \nu_1 \sim \mu} \|(\nu_1(z)-\mu_1(z))/(1-\overline{\mu_1(z)}\nu_1(z))\|_{\infty}\nonumber 
\\ &\leq &  8 \inf\limits_{\mu_1 \sim \mu, \nu_1 \sim \mu} \log \frac{1+\|(\nu_1-\mu_1)/(1-\overline{\mu_1}\nu_1)\|_{\infty}}{1-\|(\nu_1-\mu_1)/(1-\overline{\mu_1}\nu_1)\|_{\infty}} \nonumber
\\ &=& 16d_{T}([\mu]_T, [\nu]_T). \nonumber
\end{eqnarray}
This implies that $\Lambda_1$ is continuous. 

We next prove that $\Lambda_1^{-1}$ is continuous.  Consider the mapping $\Gamma: \phi \mapsto \phi'-\frac{1}{2}\phi^2$. Note that $\Gamma(N_{f_{\mu}})=S_{f_{\mu}}$ for any $\mu\in \mathcal{M}(\Delta^*)$,  we know from \cite{Zhur} that $\Gamma$ continuously maps $T_1$ to $T_2$. In view of  the fact that $\Lambda_1^{-1}(N_{f_{\mu}})=\Lambda_2^{-1} \circ \Gamma(N_{f_{\mu}})$, we conclude from Proposition \ref{bers-2} that 
$\Lambda_1^{-1}$ is continuous. Proposition \ref{bers-1} is proved. 
\end{proof}

\begin{remark}
In view of Proposition \ref{bers-2} and \ref{bers-1}, we can identify the universal Teichm\"uller space with $T_1$ or $T_2$.
Let $\mathcal{S}_q^{\infty}$ be the class of all functions $f\in \mathcal{S}_q$ that have a quasiconformal extension (still denoted by $f$) to $\widehat{\mathbb{C}}$ with $f(\infty)=\infty$. We set
$$\mathbf{N}_q: =\{\phi: \phi=N_f(z), f\in \mathcal{S}_q^{\infty}\}, \,\,\, \mathbf{S}_q:=\{\phi: \phi=S_f(z), f\in \mathcal{S}_q^{\infty}\}.$$
It is easy to see that $T_1=\mathbf{N}_q$ and $T_2= \mathbf{S}_q$.
\end{remark}

We will study the following IMS functional defined on $T$ and prove that
\begin{theorem}[{\bf Theorem \ref{m-1}}]\label{th-1}
For each $t\in \mathbb{R}$, the IMS functional $I_{{T}}: [\mu]_T  \mapsto \beta_{f_{\mu}}(t)$ on $T$ is continuous.
\end{theorem}

We next identify the universal Teichm\"uller space with $T_1=\mathbf{N}_q$. From Proposition \ref{pro-1}, since $\mathbf{N}$ is closed in $E_1$, then the closure $\overline{\mathbf{N}}_q$ of $\mathbf{N}_q$ is contained in $\mathbf{N}$.  We regard  $\overline{T}_1=\overline{\mathbf{N}}_q$ as a model of the closure of the universal Teichm\"uller space.
For any $\phi \in \overline{\mathbf{N}}_q$, there is a unique univalent function $f_{\phi}(z)$ with $f_{\phi}\in \mathcal{S}$ and such that
$$\phi(z)=N_{f_{\phi}}(z).$$ Actually,  we can take 
\begin{equation}\label{exa}f_{\phi}(z)=\int_{0}^{z} e^{\int_{0}^{\zeta}\phi(w) dw}d\zeta, \,\,\,z\in\Delta.\end{equation}

Moreover, we shall prove that
\begin{theorem}[{\bf Theorem \ref{m-3}}]\label{th-2}
For each $t\in \mathbb{R}$, the IMS functional $I_{\overline{T}_1}: \phi  \mapsto  \beta_{f_{\phi}}(t)$ on $\overline{T}_1$ is continuous.
\end{theorem}

For the IMS functional on the universal asymptotic Teichm\"uller space,  we shall show that
\begin{theorem}[{\bf Theorem \ref{m-2}}]\label{th-3}
For each $t\in \mathbb{R}$, the IMS functional $I_{{AT}}: [\mu]_{AT} \mapsto  \beta_{f_\mu}(t)$ is well-defined and continuous on $AT$.
\end{theorem}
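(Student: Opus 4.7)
The common strategy is to work with $\mathbf{h}=f_\nu\circ f_\mu^{-1}$ for two Beltrami differentials $\mu,\nu\in M(\Delta^*)$. This composition is bounded univalent in $f_\mu(\Delta)$ with $\mathbf{h}(0)=0$ and $\mathbf{h}'(0)=1$, extends to a quasiconformal self-map of $\widehat{\mathbb{C}}$ fixing $\infty$, and by the chain rule \eqref{dila} has boundary dilatation $b(\mathbf{h})=H[(\nu-\mu)/(1-\bar\mu\nu)]$. Combining Proposition \ref{key} with the pre-Schwarzian chain rule \eqref{chain}, written as $N_{f_\nu}(z)-N_{f_\mu}(z)=N_{\mathbf{h}}(f_\mu(z))f_\mu'(z)$, and Proposition \ref{pro-1} applied to $f_\mu|_\Delta$ yields, for any admissible $\epsilon>0$,
\begin{equation*}
(1-|z|^2)\,|N_{f_\nu}(z)-N_{f_\mu}(z)|\leq C\bigl[b(\mathbf{h})+\epsilon\bigr]
\end{equation*}
whenever $|z|$ is sufficiently close to $1$. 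A radial integration of $\log(f_\nu'/f_\mu')(z)=\int_0^z(N_{f_\nu}-N_{f_\mu})\,dw$ then gives $|f_\nu'(z)/f_\mu'(z)|^{\pm 1}\leq A(1-|z|)^{-C[b(\mathbf{h})+\epsilon]}$; integrating $|f'|^t$ in $\theta$ and taking $\limsup$ produces the master bound
\begin{equation*}
\bigl|\beta_{f_\nu}(t)-\beta_{f_\mu}(t)\bigr|\leq C'\,|t|\,\bigl[b(\mathbf{h})+\epsilon\bigr].
\end{equation*}

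For well-definedness, if $[\mu]_{AT}=[\nu]_{AT}$ I would first replace $\nu$ by a Teichm\"uller-equivalent $\tilde\nu$ with $\tilde\nu-\mu\to 0$ near $\mathbb{T}$; this leaves $f_\nu|_\Delta$, and hence $\beta_{f_\nu}(t)$, unchanged, and reduces to the case $b(\mathbf{h})=0$. The master bound with $b(\mathbf{h})=0$ and any $0<\epsilon<\tfrac{1}{3}(1-\|\mu\|_\infty)$ then gives $|\beta_{f_\nu}(t)-\beta_{f_\mu}(t)|\leq C'|t|\epsilon$, and sending $\epsilon\to 0$ forces equality.

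For continuity, given $[\mu_n]_{AT}\to[\mu]_{AT}$ I would fix $\mu$ and, using well-definedness to replace each $\mu_n$ by a chosen representative of $[\mu_n]_{AT}$, arrange that $b(\mathbf{h}_n)=H[(\mu_n-\mu)/(1-\bar\mu\mu_n)]\to 0$, where $\mathbf{h}_n=f_{\mu_n}\circ f_\mu^{-1}$. With $\epsilon_n\to 0$ satisfying $3[b(\mathbf{h}_n)+\epsilon_n]<1-\|\mu\|_\infty$, the master bound gives
\begin{equation*}
\bigl|\beta_{f_{\mu_n}}(t)-\beta_{f_\mu}(t)\bigr|\leq C(f_\mu,\mathbf{h}_n)\,|t|\,\bigl[b(\mathbf{h}_n)+\epsilon_n\bigr].
\end{equation*}
The main obstacle I anticipate is verifying that $C(f_\mu,\mathbf{h}_n)$ stays bounded as $n\to\infty$. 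Tracing the proof of Proposition \ref{key}, this constant depends only on $K_{f_\mu}$ (fixed), $K_{\mathbf{h}_n}$, and $\max_{\zeta\in f_\mu(\overline{\Delta(2)})}|\mathbf{h}_n(\zeta)|$. Controlling it should reduce to choosing the representatives $\mu_n$ so that $\|\mu_n\|_\infty$ stays bounded away from $1$ uniformly in $n$, a genuine statement about $AT$ that should follow from the definitions of $H$ and $d_{AT}$ by a modification-on-compacta argument; granted this, the chain rule \eqref{dila} uniformly bounds $K_{\mathbf{h}_n}$, Mori's Lemma \ref{l4} (applied after conjugating $\mathbf{h}_n$ to a self-map of $\Delta$ via Riemann maps of $f_\mu(\Delta(2))$ and $\mathbf{h}_n(f_\mu(\Delta(2)))$ and using $\mathbf{h}_n(0)=0$, $\mathbf{h}_n'(0)=1$) bounds the sup, and continuity follows.
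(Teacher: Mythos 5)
Your approach is a genuinely different route from the paper's. The paper proves continuity in two cleanly separated steps: first it shows that the Pre-Schwarzian asymptotic Bers map $\widetilde{\Lambda}_1\colon[\mu]_{AT}\mapsto[N_{f_\mu}]_{E_1}$ is a homeomorphism from $(AT,d_{AT})$ onto an open subset $\widetilde{\mathbf{N}}_q$ of $E_1/E_{1,0}$ (Proposition \ref{abers-2}, which rests on the known Schwarzian asymptotic Bers embedding of Earle--Markovic--Saric, Proposition \ref{abers-1}, together with Lemma \ref{dengjia}), and then it checks that $[N_{f_\mu}]_{E_1}\mapsto\beta_{f_\mu}(t)$ is continuous in the quotient norm, which drops out of Lemma \ref{key-lemma} with no constants at all: if $\|[N_{f_\mu}]-[N_{f_\nu}]\|<\varepsilon/|t|$ one picks $\phi\in E_{1,0}$ so that $\sup_{|z|\in(r,1)}|N_{f_\mu}-N_{f_\nu}|(1-|z|^2)<\varepsilon/|t|$ near $\mathbb{T}$, and Lemma \ref{key-lemma} gives $|\beta_{f_\nu}(t)-\beta_{f_\mu}(t)|\le\varepsilon$. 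Proposition \ref{key} is used only once in the paper, to prove the ``only if'' direction of the characterization $\mu\sim_{AT}\nu\iff N_{f_\nu}-N_{f_\mu}\in E_{1,0}$ (Proposition \ref{ase-2}), not in the continuity step. Your proposal instead tries to relate $d_{AT}$ \emph{directly} to $\beta$ by routing the whole argument through Proposition \ref{key} and its constant $C(f_\mu,\mathbf{h}_n)$. What your version buys is a more hands-on, quantitative estimate $|\beta_{f_\nu}(t)-\beta_{f_\mu}(t)|\le C'|t|[b(\mathbf{h})+\epsilon]$ that avoids invoking the EMS homeomorphism as a black box. What the paper's version buys is that the constant-tracking question never arises, because the homeomorphism $\widetilde{\Lambda}_1$ converts metric closeness in $AT$ into $E_1/E_{1,0}$-norm closeness once and for all.

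That said, the gap you yourself flag is genuine and not quite closed by your sketch. To make the master bound usable along a sequence $[\mu_n]_{AT}\to[\mu]_{AT}$ you need $C(f_\mu,\mathbf{h}_n)$ to stay bounded. Tracing the proof of Proposition \ref{key}, the final constant absorbs not only $K_{f_\mu}$, $K_{\mathbf{h}_n}$, and $\max|\mathbf{h}_n|$ but also the intermediate quantities $\mathbf{d}_n$ and the geometric-series factor $\sum_k 2^{-k(1-3[b(\mathbf{h}_n)+\varepsilon_n])/(1-[b(\mathbf{h}_n)+\varepsilon_n])}$; the $\mathbf{d}_n$-dependence is indeed absorbed into the choice of $\delta_n$, but the $\delta_n$ themselves may shrink (since the compact set $E_n$ witnessing $b(\mathbf{h}_n)$ may push toward $\mathbb{T}$), and the $K_{\mathbf{h}_n}$-dependence requires precisely the claim you defer: that one can choose representatives with $\|\mu_n\|_\infty$ uniformly bounded away from $1$. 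That is true for the asymptotic Teichm\"uller metric, but it is a fact about $AT$ that needs its own argument. Without it, the bound could degenerate even as $b(\mathbf{h}_n)\to 0$. The paper sidesteps this entirely, which is why it can quote Proposition \ref{abers-1} rather than prove a quantitative version. So: your proposal is a workable alternative route, with the acknowledged lacuna being exactly where the real work lies; the paper's homeomorphism step is what does that work for free.
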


\section{\bf {Proof of Theorem \ref{th-1}, \ref{th-2} and \ref{th-3}} }
To prove these theorems, we shall recall some known lemmas and establish some new ones. We will use the following criterion for the integral means spectrum.  For $\alpha>-1$, we define the Hilbert space $\mathcal{H}_{\alpha}^2(\Delta)$ as
$$\mathcal{H}_{\alpha}^2(\Delta)=\{\phi \in \mathcal{A}(\Delta) : \|\phi\|_{\alpha}^2:=(\alpha+1)\iint_{\Delta}|\phi (z)|^2(1-|z|^2)^{\alpha}\frac{dxdy}{\pi}<\infty\}.$$
It is known,  see \cite{HS-1},  that

\begin{lemma}\label{cri} Let $\alpha>-1$.  For each $t\in \mathbb{R}$, we have
$$\beta_{f}(t)=\inf\{ \alpha+1: \,\,(f')^{\frac{t}{2}}\in \mathcal{H}_{\alpha}^2(\Delta)\}.$$
\end{lemma}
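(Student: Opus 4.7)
The plan is to convert membership in $\mathcal{H}_\alpha(\Delta)$ into a growth condition on the circular integral means $M(r) := \int_0^{2\pi} |f'(re^{i\theta})|^t\,d\theta$. Since $f$ is univalent on the simply connected domain $\Delta$, $f'$ is non-vanishing there, so $(f')^{t/2}$ admits a holomorphic branch in $\mathcal{A}(\Delta)$ with $|(f')^{t/2}|^2 = |f'|^t$. Passing to polar coordinates in the definition of $\|\cdot\|_\alpha$ gives
\begin{equation*}
\|(f')^{t/2}\|_\alpha^2 = \frac{\alpha+1}{\pi}\iint_\Delta |f'(z)|^{t}(1-|z|^2)^\alpha\,dxdy = \frac{2(\alpha+1)}{\pi}\int_0^1 r(1-r^2)^\alpha M(r)\,dr,
\end{equation*}
so the task reduces to deciding, for each $\alpha$, whether this one-dimensional integral converges.

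For the inequality $\inf\{\alpha+1 : (f')^{t/2}\in \mathcal{H}_\alpha\} \leq \beta_f(t)$, I would directly exploit the definition of $\beta_f(t)$ as a $\limsup$: for every $\epsilon > 0$ there exists $r_0 \in (0,1)$ such that $M(r) \leq (1-r)^{-\beta_f(t) - \epsilon}$ for all $r \in [r_0, 1)$. Splitting the radial integral into $[0, r_0]$ (bounded because $M$ is continuous on compact subsets of $[0,1)$) and $[r_0, 1)$ and inserting this estimate, one sees the integral converges whenever $\alpha + 1 > \beta_f(t) + \epsilon$. Letting $\epsilon \to 0^+$ yields the claimed inequality.

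The reverse inequality relies on $M(r)$ being a non-decreasing function of $r$, which in turn rests on $|f'|^t$ being subharmonic on $\Delta$ for every real $t$. Here is where the non-vanishing of $f'$ enters: since $\log|f'|$ is harmonic on $\Delta$, a direct Laplacian computation on $e^{t\log|f'|}$ gives $\bigl(\partial_x^2+\partial_y^2\bigr)|f'|^t = t^2\,|f'|^t\,\bigl|\nabla \log|f'|\bigr|^2 \geq 0$, for every $t\in\mathbb{R}$. The sub-mean-value property then forces $r\mapsto M(r)$ to be monotone. Consequently, if $(f')^{t/2}\in \mathcal{H}_\alpha$, then for every $r_0$ close to $1$,
\begin{equation*}
\infty > \int_0^1 r(1-r^2)^\alpha M(r)\,dr \;\geq\; M(r_0)\int_{r_0}^1 r(1-r^2)^\alpha\,dr \;=\; M(r_0)\,\frac{(1-r_0^2)^{\alpha+1}}{2(\alpha+1)},
\end{equation*}
which yields $M(r_0)\leq C(1-r_0)^{-(\alpha+1)}$. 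Taking $\limsup$ against $|\log(1-r)|$ gives $\beta_f(t)\leq \alpha+1$, and the infimum over admissible $\alpha$ closes the argument.

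The main obstacle I anticipate is precisely the subharmonicity step for arbitrary real $t$, in particular $t<0$, where the usual convex-composition argument for non-negative exponents does not apply. The cleanest route is the identity $|f'|^t = e^{t\log|f'|}$ combined with the harmonicity of $\log|f'|$ as above; this one input drives the entire lower bound, while every other step reduces to an elementary manipulation of a radial integral.
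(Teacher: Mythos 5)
The paper does not actually prove Lemma~\ref{cri}: it cites \cite{HS-1} (Hedenmalm--Shimorin) and states it as known. So there is no in-paper proof to compare against; I evaluate your argument on its own.

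Your proof is correct, and it is essentially the standard argument for this kind of reformulation. Both directions check out: the upper bound $\inf\{\alpha+1\}\le\beta_f(t)$ follows from the $\limsup$ definition plus splitting the radial integral at $r_0$, and the lower bound $\beta_f(t)\le\inf\{\alpha+1\}$ follows once $M(r)=\int_0^{2\pi}|f'(re^{i\theta})|^t\,d\theta$ is known to be nondecreasing, which you obtain via subharmonicity of $|f'|^t$. Your worry about the sign of $t$ in the subharmonicity step is resolved cleanly by your computation $\Delta\!\left(e^{t\log|f'|}\right)=t^2|f'|^t\,|\nabla\log|f'||^2\ge 0$; an even shorter alternative, which your own set-up already contains, is to note that $(f')^{t/2}$ is holomorphic and $|f'|^t=|(f')^{t/2}|^2$, and $|g|^2$ is subharmonic for any holomorphic $g$ since $\Delta|g|^2=4|g'|^2$. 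Two cosmetic points: the polar-coordinate normalization should be $\frac{\alpha+1}{\pi}\int_0^1 r(1-r^2)^\alpha M(r)\,dr$ (no extra factor of $2$, since $\iint_\Delta = \int_0^1\!\int_0^{2\pi} r\,d\theta\,dr$), though this does not affect convergence; and the argument tacitly uses $\beta_f(t)\ge 0$, which is automatic from the same monotonicity of $M(r)$ (indeed $M(r)\ge 2\pi|f'(0)|^t>0$ by the sub-mean-value property at the origin), but is worth making explicit since the infimum on the right-hand side is manifestly $\ge 0$.
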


We also need the following results.
\begin{lemma}\label{key-est}
Let $f, g \in \mathcal{S}$.  For $\varepsilon>0$, there is a constant $r\in (0, 1) $ such that
\begin{equation}\label{condition}\sup\limits_{|z|\in (r, 1)}|N_g(z)-N_f(z)|(1-|z|^2) <\varepsilon. \end{equation}
Then there exist two positive numbers $C_1(r, \varepsilon)$ and $C_2(r,\varepsilon)$ such that
\begin{equation}
C_1(r, \varepsilon)\Big( \frac{1-|z|}{1+|z|}\Big)^{\frac{\varepsilon}{2}}\leq |{\mathbf{h}}' \circ f(z)| \leq  C_2(r, \varepsilon) \Big( \frac{1+|z|}{1-|z|}\Big)^{\frac{\varepsilon}{2}},\nonumber
\end{equation}
for all $|z|\in (r,1)$.  Here $\mathbf{h}=g \circ f^{-1}$.\end{lemma}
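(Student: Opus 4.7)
The plan is to translate the hypothesis on $N_g-N_f$ into a differential inequality for $\log|\mathbf{h}'\circ f|$ along radial rays, and then integrate.

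First I would observe that $g=\mathbf{h}\circ f$, so by the standard chain rule for the Pre-Schwarzian derivative (derived in exactly the same way as the chain rule \eqref{chain} for $S_f$),
\begin{equation*}
N_g(z)=N_{\mathbf{h}}(f(z))f'(z)+N_f(z),\qquad z\in\Delta,
\end{equation*}
which rearranges to $N_g(z)-N_f(z)=N_{\mathbf{h}}(f(z))f'(z)$. The key identity is then
\begin{equation*}
\frac{d}{dz}\log\mathbf{h}'(f(z))=\frac{\mathbf{h}''(f(z))}{\mathbf{h}'(f(z))}f'(z)=N_g(z)-N_f(z).
\end{equation*}

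Next, for $z\in\Delta$ with $|z|\in(r,1)$, write $z=|z|e^{i\theta}$ and integrate the identity above along the radial segment from $re^{i\theta}$ to $z$. Taking real parts gives
\begin{equation*}
\log|\mathbf{h}'(f(z))|-\log|\mathbf{h}'(f(re^{i\theta}))|
=\mathrm{Re}\int_{r}^{|z|}\bigl[N_g(te^{i\theta})-N_f(te^{i\theta})\bigr]e^{i\theta}\,dt.
\end{equation*}
Applying the hypothesis $|N_g(w)-N_f(w)|(1-|w|^2)<\varepsilon$ for $|w|\in(r,1)$ and computing $\int_r^{|z|}\frac{dt}{1-t^2}=\tfrac{1}{2}\log\frac{(1+|z|)(1-r)}{(1-|z|)(1+r)}$, I obtain
\begin{equation*}
\bigl|\log|\mathbf{h}'(f(z))|-\log|\mathbf{h}'(f(re^{i\theta}))|\bigr|\le
\frac{\varepsilon}{2}\log\frac{(1+|z|)(1-r)}{(1-|z|)(1+r)}.
\end{equation*}

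Finally, since $\mathbf{h}$ is univalent on $f(\Delta)$ and the circle $\{f(re^{i\theta}):\theta\in[0,2\pi]\}$ is a compact subset of $f(\Delta)$, the quantity $|\mathbf{h}'(f(re^{i\theta}))|=|g'(re^{i\theta})/f'(re^{i\theta})|$ is bounded above and below by positive constants depending only on $r$ (and on $f,g$, which are fixed once the threshold $r$ is chosen). Exponentiating the previous display and absorbing the factor $((1\pm r)/(1\mp r))^{\varepsilon/2}$ together with these bounds into $C_1(r,\varepsilon)$ and $C_2(r,\varepsilon)$ yields the claimed two-sided estimate.

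There is no real obstacle: the only point to watch is that the path of integration remains in the annulus $\{|w|>r\}$ where the hypothesis applies, which is automatic for the radial segment, and that the boundary values on $|z|=r$ are comparable, which follows from compactness and the univalence of $\mathbf{h}$ on $f(\Delta)$.
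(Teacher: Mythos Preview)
Your proof is correct and follows essentially the same route as the paper: both arguments derive the identity $\frac{d}{dz}\log\mathbf{h}'(f(z))=N_g(z)-N_f(z)$ from the chain rule for the Pre-Schwarzian, integrate it along the radial segment from $re^{i\arg z}$ to $z$, bound the integral by $\frac{\varepsilon}{2}\log\frac{(1+|z|)(1-r)}{(1-|z|)(1+r)}$ using the hypothesis, and absorb the boundary term $\log|\mathbf{h}'(f(re^{i\theta}))|$ into the constants via compactness of the circle $|z|=r$.
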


\begin{proof}
From $\mathbf{h}=g \circ f^{-1}$ and (\ref{chain}), we have
 \begin{equation}
N_g(z)-N_f(z)=\frac{{\mathbf{h}}'' \circ f(z) }{{\mathbf{h}}' \circ f(z)} \cdot f'(z).\nonumber
\end{equation}
We let $$P(z)=\frac{{\mathbf{h}}'' \circ f(z)}{{\mathbf{h}}' \circ f(z)} \cdot f'(z),\,\, L(z)={\mathbf{h}}' \circ f(z).$$
Let $z=|z|e^{i \arg z}$ be such that $|z|\in (r,1)$, then
\begin{equation}
\log L(z)=\int_{z_r}^z P(\zeta)d\zeta+ \log L(z_r), \nonumber
\end{equation}
where $z_r=r e^{i \arg z}$, and the integral is taken on the radial path from $z_r$ to $z$. 

On the other hand, since for all $|z| \in (r, 1)$,  $|P(z)(1-|z|^2)| <\varepsilon$, then we have
\begin{eqnarray}
\Big| \int_{z_r}^z P(\zeta)d\zeta\Big| &=& \Big| \int_{r}^{|z|} P(t e^{i \arg z}) e^{i \arg z}dt \Big| \nonumber \\
 &= & \Big| \int_{r}^{|z|} P(t e^{i \arg z})(1-t^2) \cdot\frac{1}{1-t^2} e^{i \arg z}dt \Big| \nonumber  \\
 &\leq & \int_{r}^{|z|} \frac{\varepsilon}{1-t^2}dt =\frac{\varepsilon}{2}\Big[\log \frac{1+|z|}{1-|z|}-\log \frac{1+r}{1-r}\Big]. \nonumber \end{eqnarray}
We denote
$$\mathbf{M}_0=\max\limits_{|z|=r}| \log  L(z)|=\max\limits_{|z|=r} |\log{\mathbf{h}}' \circ f(z)|.$$Then we  see from the fact $\Big|\log |L(z)|\Big|\leq |\log L(z)|$ that
\begin{equation}
\Big|\log |{\mathbf{h}}' \circ f(z)|\Big| \leq \frac{\varepsilon}{2}\Big[\log \frac{1+|z|}{1-|z|}-\log \frac{1+r}{1-r}\Big]+\mathbf{M}_0. \nonumber
\end{equation}
It follows that
\begin{equation}
e^{-\mathbf{M}_0}\Big( \frac{1+r}{1-r}\Big)^{\frac{\varepsilon}{2}} \Big( \frac{1-|z|}{1+|z|}\Big)^{\frac{\varepsilon}{2}}\leq |{\mathbf{h}}' \circ f(z)| \leq  e^{\mathbf{M}_0}\Big( \frac{1-r}{1+r}\Big)^{\frac{\varepsilon}{2}} \Big( \frac{1+|z|}{1-|z|}\Big)^{\frac{\varepsilon}{2}}. \nonumber
\end{equation}
This proves the lemma.
\end{proof}

\begin{lemma}\label{key-lemma}
Let $f, g \in \mathcal{S}$ and $t\neq 0$. (1) If $\beta_f(t):=\gamma>0$ and for $\varepsilon\in (0, \gamma/|t|)$ there is a constant $r\in (0, 1) $ such that (\ref{condition}) holds, then we have
\begin{equation}
|\beta_g(t)-\beta_f(t)|\leq |t|\varepsilon,\,\,\, {\textup i.e.,}\,\,\, \gamma-|t|\varepsilon \leq \beta_g(t)\leq \gamma+|t|\varepsilon
\end{equation}
(2) If $\beta_f(t)=0$ and for $\varepsilon>0$ there is a constant $r\in (0,1)$ such that (\ref{condition}) holds, then we have $\beta_g(t)\leq |t|\varepsilon$.
\end{lemma}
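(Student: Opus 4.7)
The plan is to convert the pointwise control of $|\mathbf{h}'\circ f|$ supplied by Lemma \ref{key-est} into an integral comparison between $\int_0^{2\pi}|g'(\rho e^{i\theta})|^t\,d\theta$ and $\int_0^{2\pi}|f'(\rho e^{i\theta})|^t\,d\theta$, and then take $\limsup_{\rho\to1^-}$ of the logarithm normalized by $|\log(1-\rho)|$. Setting $\mathbf{h}=g\circ f^{-1}$, the chain rule gives $|g'(z)|=|\mathbf{h}'(f(z))|\,|f'(z)|$, so Lemma \ref{key-est} together with the hypothesis (\ref{condition}) delivers, for $|z|\in(r,1)$,
\begin{equation*}
C_1(r,\varepsilon)\left(\frac{1-|z|}{1+|z|}\right)^{\varepsilon/2}|f'(z)|\leq|g'(z)|\leq C_2(r,\varepsilon)\left(\frac{1+|z|}{1-|z|}\right)^{\varepsilon/2}|f'(z)|.
\end{equation*}

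Raising to the $t$-th power (reversing both inequalities when $t<0$ and relabeling the constants accordingly), I obtain constants $\tilde C_1(r,\varepsilon,t)$ and $\tilde C_2(r,\varepsilon,t)$ for which
\begin{equation*}
\tilde C_1\left(\frac{1-|z|}{1+|z|}\right)^{|t|\varepsilon/2}|f'(z)|^t\leq|g'(z)|^t\leq\tilde C_2\left(\frac{1+|z|}{1-|z|}\right)^{|t|\varepsilon/2}|f'(z)|^t
\end{equation*}
for all $|z|\in(r,1)$. Integrating in $\theta\in[0,2\pi)$ with $z=\rho e^{i\theta}$ and $\rho\in(r,1)$, then taking $\log$ and dividing by $|\log(1-\rho)|$, the $\log\tilde C_i$ terms contribute $0$ in the limit and the ratios $\tfrac{|t|\varepsilon}{2}\log\tfrac{1\pm\rho}{1\mp\rho}\big/|\log(1-\rho)|$ converge to $\pm|t|\varepsilon/2$. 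Invoking the elementary identity $\limsup_n(X_n+a_n)=\limsup_n X_n+\lim_n a_n$ whenever $a_n$ converges, and passing to $\limsup_{\rho\to1^-}$ in each half of the two-sided estimate, I arrive at
\begin{equation*}
\beta_f(t)-\tfrac{|t|\varepsilon}{2}\leq\beta_g(t)\leq\beta_f(t)+\tfrac{|t|\varepsilon}{2},
\end{equation*}
which yields the claimed bound $|\beta_g(t)-\beta_f(t)|\leq|t|\varepsilon$. For part (1), the assumption $\varepsilon<\gamma/|t|$ with $\gamma=\beta_f(t)>0$ preserves the strict positivity $\gamma-|t|\varepsilon>0$ of the lower bound; for part (2), only the upper half of the chain is invoked and gives $\beta_g(t)\leq|t|\varepsilon/2\leq|t|\varepsilon$.

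The argument is essentially bookkeeping, so there is no substantive obstacle. The two places that require attention are the sign of $t$ at the exponentiation step — for $t<0$ the inequalities reverse and the roles of $C_1$ and $C_2$ must be interchanged so that the exponent $|t|\varepsilon/2$ emerges on the correct side — and the distribution of $\limsup$ across a sum of logarithms, where one relies on the fact that every term other than the $|f'|^t$ integral converges to a genuine limit so that $\limsup$ can be pulled through the sum additively.
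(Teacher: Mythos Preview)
Your argument is correct and in fact yields the sharper bound $|\beta_g(t)-\beta_f(t)|\le |t|\varepsilon/2$, from which the stated conclusion follows a fortiori. The route, however, differs from the paper's. The paper does not work directly with the circular means in the definition of $\beta_f(t)$; instead it invokes the weighted Bergman space criterion of Lemma~\ref{cri}, namely $\beta_f(t)=\inf\{\alpha+1:(f')^{t/2}\in\mathcal H_\alpha(\Delta)\}$, and compares the area integrals $\iint_{\Delta\setminus\Delta(r)}|g'(z)|^t(1-|z|^2)^{-1+\gamma\pm t\varepsilon}\,dxdy$ with the corresponding integrals for $f$ via the pointwise bound from Lemma~\ref{key-est}. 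Your approach is more elementary in that it bypasses Lemma~\ref{cri} entirely: once Lemma~\ref{key-est} is in hand, the pointwise estimate on $|\mathbf h'\circ f|$ factors out of the angular integral at fixed radius, and the $\limsup$ manipulation you describe (splitting off the convergent additive terms) finishes the job. The paper's detour through $\mathcal H_\alpha$ costs a factor of $2$ in the final constant because it uses the exponents $\gamma\pm t\varepsilon$ rather than $\gamma\pm t\varepsilon/2$, though this has no effect on the applications downstream.
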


\begin{proof}
(1) Let $\mathbf{h}=g\circ f^{-1}$. When $\beta_{f}(t)=\gamma >0$, by Lemma \ref{key-est}, for $\varepsilon\in (0, \gamma/|t|)$, we have
\begin{equation}\label{we-1}
C_1(r, \varepsilon)\Big( \frac{1-|z|}{1+|z|}\Big)^{\frac{\varepsilon}{2}}\leq |{\mathbf{h}}' \circ f(z)| \leq  C_2(r, \varepsilon) \Big( \frac{1+|z|}{1-|z|}\Big)^{\frac{\varepsilon}{2}}, \, |z|\in (r,1).
\end{equation}

On the other hand, in view of Lemma \ref{cri}, we see that, for $\varepsilon\in (0, \gamma/|t|)$,
\begin{equation}\label{we-2}
\iint_{\Delta}|f'(z)|^t (1-|z|^{2})^{-1+\gamma+|t|\varepsilon/2}dxdy<\infty,
\end{equation}
and
\begin{equation}\label{we-3}
\iint_{\Delta}|f'(z)|^t (1-|z|^{2})^{-1+\gamma-|t|\varepsilon/2}dxdy=\infty.
\end{equation}
When $t>0$, it follows from the second inequality of (\ref{we-1}) and (\ref{we-2}) that
\begin{eqnarray}
\lefteqn{\iint_{\Delta-\Delta(r)}|g'(z)|^t (1-|z|^{2})^{-1+\gamma+t\varepsilon}dxdy} \nonumber \\&&=\iint_{\Delta-\Delta(r)}|\mathbf{h}'\circ f(z)|^t |f'(z)|^t (1-|z|^{2})^{-1+\gamma+t\varepsilon}dxdy  \nonumber \\
&&\leq [C_2(r, \varepsilon)]^t \iint_{\Delta-\Delta(r)}\Big( \frac{1+|z|}{1-|z|}\Big)^{t\varepsilon/2} |f'(z)|^t (1-|z|^{2})^{-1+\gamma+t\varepsilon}dxdy \nonumber \\
&&\leq 2^{t\varepsilon}[C_2(r, \varepsilon)]^t\iint_{\Delta-\Delta(r)} |f'(z)|^t (1-|z|^{2})^{-1+\gamma+t\varepsilon/2}dxdy<\infty. \nonumber
\end{eqnarray}
Then it is easy to see from Lemma \ref{cri} that $\beta_{g}(t)\leq \gamma+t\varepsilon.$ Meanwhile, from the first  inequality of (\ref{we-1}) and (\ref{we-3}), we have
\begin{eqnarray}
\lefteqn{\iint_{\Delta-\Delta(r)}|g'(z)|^t (1-|z|^{2})^{-1+\gamma-t\varepsilon}dxdy} \nonumber \\&&=\iint_{\Delta-\Delta(r)}|\mathbf{h}'\circ f(z)|^t |f'(z)|^t (1-|z|^{2})^{-1+\gamma-t\varepsilon}dxdy  \nonumber \\
&&\geq [C_1(r, \varepsilon)]^t \iint_{\Delta-\Delta(r)}\Big( \frac{1-|z|}{1+|z|}\Big)^{t\varepsilon/2} |f'(z)|^t (1-|z|^{2})^{-1+\gamma-t\varepsilon}dxdy \nonumber \\
&&\geq 2^{-t\varepsilon}[C_1(r, \varepsilon)]^t\iint_{\Delta-\Delta(r)} |f'(z)|^t (1-|z|^{2})^{-1+\gamma-t\varepsilon/2}dxdy=\infty. \nonumber
\end{eqnarray}
This implies that $\beta_{g}(t)\geq \gamma-t\varepsilon$. Hence we have $|\beta_{g}(t)-\gamma|\leq t\varepsilon$ when $t>0$. The case $t<0$ can be proved by the similar way.

(2) When $\beta_f(t)=0$, for any $\varepsilon>0$, repeating the above arguments by only using the second inequality of (\ref{we-1}) and (\ref{we-2}), we can prove that $\beta_g(t)\leq |t|\varepsilon.$ Now, the proof of Lemma \ref{key-lemma} is finished. \end{proof}

\subsection{Proof of Theorem \ref{th-1} and \ref{th-2}}
We see from Proposition \ref{bers-1} that Theorem \ref{th-2} implies Theorem \ref{th-1}.  We will only prove Theorem \ref{th-2}.
\begin{proof}[Proof of Theorem \ref{th-2}]
Since the case for $t=0$ is trivial, we assume that $t\neq0$.  For any $\phi\in \overline{\mathbf{N}}_q$, we take $f_{\phi}$ as in (\ref{exa}). For given $\psi\in  \overline{\mathbf{N}}_q$. When $\beta_{f_{\psi}}(t)=\gamma>0$,  to prove $I_{\overline{T}_1}$ is continuous at $\psi$,  it suffices to prove that, for small $\varepsilon>0$, there is a constant $\delta>0$ such that $$|\beta_{f_{\phi}}(t)-\gamma|\leq \varepsilon,$$
for any $\phi \in\overline{\mathbf{N}}_q$ satisfying that  $\|\phi-\psi\|_{E_1}<\delta$.

Actually,  for any $\varepsilon\in (0, \gamma)$, we take $\delta=\varepsilon/|t|$. Let $\phi \in\overline{\mathbf{N}}_q$ satisfy that $\|\phi-\psi\|_{E_1}<\delta=\varepsilon/|t|$. Then, for  any number $r\in (0,1)$, we have
\begin{equation}
\sup\limits_{|z|\in (r,1)} |N_{f_{\phi}}(z)-N_{{f}_{\psi}}(z)|(1-|z|^2)<\varepsilon/|t|.\nonumber
\end{equation}
Hence, by (1) of Lemma \ref{key-lemma}, we have
$$|\beta_{f_{\phi}}(t)-\gamma|\leq |t|\cdot\varepsilon/|t|=\varepsilon.$$
This proves that $I_{\overline{T}_1}$ is continuous at $\psi$ when $\beta_{f_{\psi}}(t)>0$.

When $\beta_{f_{\psi}}(t)=0$, for any $\varepsilon>0$, we still take $\delta=\varepsilon/|t|$. Similarly, by using (2) of Lemma \ref{key-lemma}, we have $\beta_{f_{\phi}}(t)\leq \varepsilon$ for any $\phi \in\overline{\mathbf{N}}_q$ satisfying that  $\|\phi-\psi\|_{E_1}<\delta$.  This means that $I_{\overline{T}_1}$ is continuous at $\psi$ when $\beta_{f_{\psi}}(t)=0$. The proof of Theorem \ref{th-2} is complete.   \end{proof}

\subsection{Proof of Theorem \ref{th-3}} To prove Theorem \ref{th-3}, we shall establish some new results about the universal asymptotic Teichm\"uller space. We define the closed subspace $E_{1, 0}$ of $E_1$ as
$$E_{1,0}:=\{\phi\in E_1: \lim\limits_{|z|\rightarrow 1^{-}}\phi(z)(1-|z|^2)=0\}.$$
The closed subspace $E_{2,0}$ of $E_2$ is defined as
$$E_{2,0}:=\{\phi\in E_2: \lim\limits_{|z|\rightarrow 1^{-}}\phi(z)(1-|z|^2)^2=0\}.$$

Two elements $\phi_1, \phi_2 \in E_j$ are said to be equivalent, if $\phi_1-\phi_2\in E_{j,0}, j=1,2$. The equivalence class of $\phi\in E_j$  is denoted by $[\phi]_{E_j}, j=1,2$.  The set of all equivalence classes $[\phi]_{E_j}$ will be denote by $E_j/E_{j,0}, j=1,2$, respectively. $E_j/E_{j,0}$ is a Banach space with the quotient norm
$$\|[\phi]_{E_j}\|:=\inf\limits_{\psi \in [\phi]_{E_j}} \|\psi\|_{E_j}=\inf\limits_{\psi \in E_{j,0}} \|\phi+\psi\|_{E_j},\, j=1,2.$$

The following description of the asymptotically equivalence in terms of Schwarzian derivative has been given in \cite{EMS}.
\begin{proposition}\label{ase-1}
Let $\mu, \nu \in \mathcal{M}({\Delta}^{*})$. $\mu$ is  asymptotically equivalent to $\nu$ if and only if $S_{f_{\nu}}-S_{f_{\mu}}$ belongs to $E_{2,0}$.
\end{proposition}
We will give a new characterization of the asymptotically equivalence in terms of Pre-Schwarzian derivative.
We shall prove that
\begin{proposition}\label{ase-2}
Let $\mu, \nu \in \mathcal{M}({\Delta}^{*})$. $\mu$ is  asymptotically equivalent to $\nu$ if and only if $N_{f_{\nu}}-N_{f_{\mu}}$ belongs to $E_{1,0}$.
\end{proposition}

\begin{proof}
We first prove the if part.  Let $\mathbf{h}=f_{\nu}\circ f_{\mu}^{-1}$.  When $N_{f_{\nu}}-N_{f_{\mu}}\in E_{1,0}$, we see from
\begin{equation}\label{eq-1}
N_{f_{\nu}}-N_{f_{\mu}}=(N_{\mathbf{h}}\circ f_{\mu})\cdot f'_{\mu}
\end{equation}
that $\log {\mathbf{h}}'\circ f_{\mu}(z)$ belongs to the little Bloch space $\mathcal{B}_0$, which is defined as
$${\mathcal{B}_0}:=\{\phi\in \mathcal{A}(\Delta):\, \lim\limits_{|z|\rightarrow 1^{-}}\phi'(z)(1-|z|^2)=0\}.$$
From \cite[Proposition 8]{Zh-1}, we see that
\begin{equation}
[\log {\mathbf{h}}'\circ f_{\mu}(z)]''(1-|z|^2)^2 \rightarrow 0,\,\, {\text as }\,\, |z| \rightarrow {1^{-}}. \nonumber
\end{equation}
That is
\begin{eqnarray}
\lefteqn{\Big[\frac{{\mathbf{h}}''(\zeta)}{{\mathbf{h}}'(\zeta)}\Big]'\circ f_{\mu}(z)\cdot [f'_{\mu}(z)]^2(1-|z|^2)^2}\nonumber \\
&&\quad+\frac{{\mathbf{h}}''\circ f_{\mu}(z)}{{\mathbf{h}}'\circ f_{\mu}(z)}\cdot [f''_{\mu}(z)](1-|z|^2)^2 \rightarrow 0,\,\, {\text as }\,\, |z| \rightarrow {1^{-}}. \nonumber
\end{eqnarray}
Note that
\begin{eqnarray}
\lefteqn{\frac{{\mathbf{h}}''\circ f_{\mu}(z)}{{\mathbf{h}}'\circ f_{\mu}(z)}\cdot [f''_{\mu}(z)](1-|z|^2)^2}\nonumber \\
&&=[N_{\mathbf{h}}\circ f_{\mu}(z)][f'_{\mu}(z)](1-|z|^2)\cdot \|N_{f_{\mu}}\|_{E_1}   \rightarrow 0,\,\, {\text as }\,\, |z| \rightarrow {1^{-}}. \nonumber
\end{eqnarray}
It follows that
\begin{eqnarray}\label{eq-2}
\Big[\frac{{\mathbf{h}}''(\zeta)}{{\mathbf{h}}'(\zeta)}\Big]'\circ f_{\mu}(z)\cdot [f'_{\mu}(z)]^2(1-|z|^2)^2  \rightarrow 0,\,\, {\text as }\,\, |z| \rightarrow {1^{-}}.
\end{eqnarray}
On the other hand, from (\ref{chain-1}), we have
\begin{equation}
S_{f_{\nu}}-S_{f_{\mu}}=(S_{\mathbf{h}}\circ f_{\mu})\cdot[f'_{\mu}]^2, \nonumber
\end{equation}
that is
\begin{eqnarray}
S_{f_{\nu}}(z)-S_{f_{\mu}}(z)=\Big[\frac{{\mathbf{h}}''(\zeta)}{{\mathbf{h}}'(\zeta)}\Big]'\circ f_{\mu}(z)\cdot[f'_{\mu}(z)]^2-\frac{1}{2}[N_{\mathbf{h}}\circ f_{\mu}(z)]^2\cdot[f'_{\mu}(z)]^2. \nonumber
\end{eqnarray}
Consequently, we see from (\ref{eq-1}) and (\ref{eq-2}) that $S_{f_{\nu}}-S_{f_{\mu}}$ belongs to $E_{2,0}$.  By Proposition \ref{ase-1}, we obtain that $\mu$ is  asymptotically equivalent to $\nu$. The if part is proved.

We continue to prove the only if part. When $\mu$ is  asymptotically equivalent to $\nu$,  we know that there is a $\widetilde{\nu}\in \mathcal{M}(\Delta^{*})$ such that $\widetilde{\nu}\sim \nu$ and $\widetilde{\nu}(z)-\mu(z) \rightarrow 0$ as $z \rightarrow 1^{-}$.
Let $\widetilde{\mathbf{h}}=f_{\widetilde{\nu}}\circ f_{\mu}^{-1}$.  It follows from (\ref{dila}) that
\begin{eqnarray}
|\mu_{\widetilde{\mathbf{h}}}\circ f_{{\mu}}(z)|=\frac{|\widetilde{\nu}(z)-{\mu}(z)|}{|1-\overline{\mu(z)}\widetilde{\nu}(z)|} \leq \frac{|\widetilde{\nu}(z)-{\mu}(z)|}{1-\|\mu\|_{\infty}\|\widetilde{\nu}\|_{\infty}}. \nonumber
\end{eqnarray}
Hence we have $b(\widetilde{\mathbf{h}})=0$.  Then we see from Proposition \ref{key} that
\begin{eqnarray}
|N_{f_{\nu}}(z)-N_{f_{\mu}}(z)|(1-|z|^2)&=&|N_{\mathbf{h}}\circ f_{\mu}(z)|\cdot |f'_{\mu}(z)|(1-|z|^2) \nonumber \\
& \leq &  4 |N_{\mathbf{h}}\circ f_{\mu}(z)|{\textup{dist}}(f_{\mu}(z), f(\mathbb{T})) \rightarrow 0, \,\, {\text as}\,\, |z| \rightarrow 1^{-}.\nonumber
\end{eqnarray}
This means that $N_{f_{\nu}}-N_{f_{\mu}}$ belongs to $E_{1,0}$. The only if part is proved. This finishes the proof of Proposition \ref{ase-2}.
\end{proof}

In the standard theory of universal asymptotic Teichm\"uller space, $AT$ is embedded into an open subset of a complex Banach space by using the Bers mapping induced by the Schwarzian derivative. We shall consider the mapping induced by the Pre-Schwarzian derivative.
We let
$$\widetilde{\Lambda}_1: [\mu]_{AT}  \mapsto  [N_{f_{\mu}}]_{E_1}, \,\,\,  \widetilde{\Lambda}_2: [\mu]_{AT}  \mapsto  [S_{f_{\mu}}]_{E_2}.$$
The mapping $\widetilde{\Lambda}_2$ is called {\em asymptotic Bers mapping}.  It was proved in \cite{EMS} that
\begin{proposition}\label{abers-1}
The mapping $\widetilde{\Lambda}_2: [\mu]_{AT}  \mapsto  [S_{f_{\mu}}]_{E_2}$ from $(AT, d_{AT})$ to $\widetilde{\mathbf{S}}_q$ in $E_2/E_{2,0}$ is a homeomorphism.
Here, $$\widetilde{\mathbf{S}}_q:=\{[\phi]_{E_2}: \phi=S_f(z),\, f\in \mathcal{S}_q^{\infty}\}$$
is an open subset of $E_2/E_{2,0}$.
\end{proposition}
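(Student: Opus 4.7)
The plan is to deduce Proposition \ref{abers-1} by combining three inputs already in force: the Schwarzian characterization of asymptotic equivalence (Proposition \ref{ase-1}), the classical Bers homeomorphism $\Lambda_2$ (Proposition \ref{bers-2}), and the Ahlfors--Weill quasiconformal reflection $\mathbf{AW}$ from $\{\phi\in E_2:\|\phi\|_{E_2}<2\}$ into $M(\Delta^{*})$, which provides a holomorphic right inverse to $\mu\mapsto S_{f_\mu}$ with $\|\mathbf{AW}(\phi)\|_\infty\le \|\phi\|_{E_2}/2$.

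First I would dispose of well-definedness and injectivity at once. By Proposition \ref{ase-1}, we have the chain of equivalences
\[
[\mu]_{AT}=[\nu]_{AT}\iff \mu\sim_{AT}\nu\iff S_{f_\nu}-S_{f_\mu}\in E_{2,0}\iff [S_{f_\mu}]_{E_2}=[S_{f_\nu}]_{E_2},
\]
so $\widetilde{\Lambda}_2$ is a well-defined injection with image precisely $\widetilde{\mathbf{S}}_q$. Next, for openness of $\widetilde{\mathbf{S}}_q$ in $E_2/E_{2,0}$: by the classical theory (Ahlfors--Weill) the set $\mathbf{S}_q=T_2$ is open in $E_2$, and the canonical projection $\pi:E_2\to E_2/E_{2,0}$ is an open map (it is the quotient by a closed subspace), so $\widetilde{\mathbf{S}}_q=\pi(\mathbf{S}_q)$ is open in the quotient Banach space.

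For continuity of $\widetilde{\Lambda}_2$, suppose $d_{AT}([\mu_n]_{AT},[\mu]_{AT})\to 0$. By the infimum in the definition of $d_{AT}$, I can choose representatives $\widetilde{\mu}_n\sim_T\mu_n$ and $\widetilde{\mu}\sim_T\mu$ so that the relative Beltrami coefficient $\lambda_n=(\widetilde{\mu}_n-\widetilde{\mu})/(1-\overline{\widetilde{\mu}}\widetilde{\mu}_n)$ satisfies $H[\lambda_n]\to 0$. For each $n$ pick a compact $E_n\subset\Delta^{*}$ on which the bulk of $\|\lambda_n\|_\infty$ sits, and modify $\widetilde{\mu}_n$ to an asymptotically equivalent $\widehat{\mu}_n$ that agrees with $\widetilde{\mu}$ on $E_n$ and with $\widetilde{\mu}_n$ outside; then $\widehat{\mu}_n$ is Teichm\"uller-equivalent to a coefficient whose $L^\infty$ norm is close to $H[\lambda_n]$, so by continuity of the classical Bers map $\Lambda_2$ on $T$ (Proposition \ref{bers-2}), $\|S_{f_{\widehat{\mu}_n}}-S_{f_{\widetilde{\mu}}}\|_{E_2}\to 0$. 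Passing to the quotient gives $\|[S_{f_{\mu_n}}]_{E_2}-[S_{f_\mu}]_{E_2}\|\to 0$, as required.

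The main obstacle is the continuity of $\widetilde{\Lambda}_2^{-1}$, and here I would use the Ahlfors--Weill section $\mathbf{AW}$ to produce an explicit local inverse. By right-translation in $AT$ (using that both source and target are homogeneous under the natural group actions) it suffices to verify continuity at the base point, i.e., at $[0]_{AT}$ sent to $[0]_{E_2}$. Given $\|[\phi_n]_{E_2}\|\to 0$ in the quotient, choose $\psi_n\in E_{2,0}$ with $\|\phi_n-\psi_n\|_{E_2}\to 0$. The key analytic fact needed is that $\mathbf{AW}$ sends $E_{2,0}\cap\{\|\phi\|_{E_2}<2\}$ into Beltrami coefficients with $H=0$: this follows from the explicit formula $\mathbf{AW}(\phi)(z)=-\tfrac12(1-|z|^2)^2\,\overline{\phi(1/\bar z)}/\bar z^4$ on $\Delta^*$ after accounting for the appropriate normalization, so $\phi\in E_{2,0}$ forces the absolute value of $\mathbf{AW}(\phi)$ to vanish as $|z|\to 1^+$. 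Consequently $\mathbf{AW}(\phi_n-\psi_n)$ supplies a representative of the class $\widetilde{\Lambda}_2^{-1}([\phi_n]_{E_2})$ (up to a coefficient in $E_{2,0}$ absorbed by choice of representative) whose $L^\infty$ norm is bounded by $\|\phi_n-\psi_n\|_{E_2}/2\to 0$, so $d_{AT}([\mathbf{AW}(\phi_n)]_{AT},[0]_{AT})\to 0$. The hardest technical point is the precise verification that $\mathbf{AW}$ maps $E_{2,0}$ into the kernel of $H$, for which I would either check the pointwise estimate directly from the explicit formula or argue via Proposition \ref{key} applied to $\mathbf{h}=f_{\mathbf{AW}(\phi)}$ to transfer boundary vanishing of $S_{\mathbf{h}}$ to boundary vanishing of the Beltrami coefficient.
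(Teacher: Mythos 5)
The paper does not actually prove Proposition~\ref{abers-1}: it is stated as a quoted result from the Earle--Markovic--\v{S}ari\'{c} paper \cite{EMS}, and the only thing the paper proves in this circle is its Pre-Schwarzian analogue (Proposition~\ref{abers-2}), deduced from \ref{abers-1} together with Lemma~\ref{dengjia}. So your proposal is being measured against the literature, not against a paper proof. Your broad outline is the standard Teichm\"uller-theoretic route and the main ingredients are right: bijectivity via Proposition~\ref{ase-1}, openness of $\widetilde{\mathbf{S}}_q$ from openness of $\mathbf{S}_q$ plus openness of the quotient projection, and the observation that the Ahlfors--Weill coefficient satisfies $|\mathbf{AW}(\phi)(z)|=\tfrac12(1-|1/\bar z|^2)^2|\phi(1/\bar z)|$, so $\phi\in E_{2,0}$ forces $H[\mathbf{AW}(\phi)]=0$ --- that estimate is correct.

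There are, however, two places where a real argument is missing. In the forward continuity you write ``$\widetilde{\mu}_n\sim_T\mu_n$ and $\widetilde{\mu}\sim_T\mu$''; the infimum in $d_{AT}$ ranges over \emph{asymptotically} equivalent representatives, and a priori \emph{both} representatives vary with $n$. After the compact modification you obtain $d_T([\widehat{\mu}_n]_T,[\widetilde{\mu}^{(n)}]_T)\to 0$ where $[\widetilde{\mu}^{(n)}]_T$ is not fixed, so you cannot invoke continuity of $\Lambda_2$ at a single point; you need either a ``right-coset'' reduction allowing you to fix one representative, or a uniform (local Lipschitz) estimate for the Bers map. Second, the homogeneity reduction for $\widetilde{\Lambda}_2^{-1}$ silently assumes the base-change maps descend to \emph{homeomorphisms} of $\widetilde{\mathbf{S}}_q$; this is true but requires the observation that $\pi|_{\mathbf{S}_q}$ is an open quotient map, which you never state. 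Once you have that observation, though, the entire homogeneity and Ahlfors--Weill apparatus is unnecessary: the composite $\mathbf{S}_q\xrightarrow{\Lambda_2^{-1}}T\to AT$ is continuous, it descends to $\widetilde{\mathbf{S}}_q\to AT$ by Proposition~\ref{ase-1}, and for $[\phi_n]\to[\phi]$ in $E_2/E_{2,0}$ one picks $\psi_n\in E_{2,0}$ with $\phi_n+\psi_n\to\phi$ in $E_2$, notes $\phi_n+\psi_n\in\mathbf{S}_q$ for large $n$ by openness, and concludes $\widetilde{\Lambda}_2^{-1}([\phi_n])=[\Lambda_2^{-1}(\phi_n+\psi_n)]_{AT}\to[\Lambda_2^{-1}(\phi)]_{AT}$. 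That replaces the hardest part of your argument with three lines.
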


We will prove that
\begin{proposition}\label{abers-2}
The mapping $\widetilde{\Lambda}_1: [\mu]_{AT}  \mapsto  [N_{f_{\mu}}]_{E_1}$ from  $(AT, d_{AT})$ to $\widetilde{\mathbf{N}}_q$ in $E_1/E_{1,0}$ is a homeomorphism.
Here, $$\widetilde{\mathbf{N}}_q:=\{[\phi]_{E_1}: \phi=N_f(z),\, f\in \mathcal{S}_q^{\infty}\}$$
is an open subset of $E_1/E_{1,0}$.
\end{proposition}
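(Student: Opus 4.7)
The plan is to mirror the proof of Proposition \ref{abers-1}, replacing Proposition \ref{ase-1} with Proposition \ref{ase-2} and invoking Proposition \ref{key} as the new analytic input. Well-definedness and injectivity of $\widetilde{\Lambda}_1$ follow directly from Proposition \ref{ase-2}: two points in $AT$ coincide iff the corresponding pre-Schwarzians differ by an element of $E_{1,0}$, iff they coincide in $E_1/E_{1,0}$. Surjectivity onto $\widetilde{\mathbf{N}}_q$ is automatic.

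The central step is continuity of $\widetilde{\Lambda}_1$. Suppose $d_{AT}([\mu_n]_{AT},[\mu^*]_{AT})\to 0$. From the definition of $d_{AT}$, one selects representatives $\mu_n'\in[\mu_n]_{AT}$ and $\nu_n'\in[\mu^*]_{AT}$ nearly realizing the infimum, so that $H_n:=H[(\mu_n'-\nu_n')/(1-\overline{\nu_n'}\mu_n')]\to 0$. Setting $f_n=f_{\nu_n'}$, $g_n=f_{\mu_n'}$, and $\mathbf{h}_n=g_n\circ f_n^{-1}$, the chain rule (\ref{dila}) shows $b(\mathbf{h}_n)=H_n\to 0$. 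Once $3b(\mathbf{h}_n)<1-\|\mu_{f_n}\|_\infty$, Proposition \ref{key} combined with Proposition \ref{pro-1} yields
\[
|N_{g_n}(z)-N_{f_n}(z)|(1-|z|^2)=|N_{\mathbf{h}_n}\circ f_n(z)|\,|f_n'(z)|(1-|z|^2)\leq 4\,C(f_n,\mathbf{h}_n)[b(\mathbf{h}_n)+\varepsilon]
\]
for all $z$ with $\mathrm{dist}(f_n(z),\partial f_n(\mathbb{T}))<\delta(n,\varepsilon)$. Letting $\varepsilon\to 0$ and then $n\to\infty$, $\limsup_{|z|\to 1^-}|N_{g_n}(z)-N_{f_n}(z)|(1-|z|^2)\to 0$. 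Combining this with the standard estimate $\|[\phi]_{E_1}\|_{E_1/E_{1,0}}\leq\limsup_{|z|\to 1^-}|\phi(z)|(1-|z|^2)$ (provable via the analytic dilations $\phi_r(z)=\phi(rz)\in E_{1,0}$) and Proposition \ref{ase-2} to identify $[N_{g_n}]_{E_1}=\widetilde{\Lambda}_1([\mu_n]_{AT})$ and $[N_{f_n}]_{E_1}=\widetilde{\Lambda}_1([\mu^*]_{AT})$, continuity follows.

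To obtain the homeomorphism and openness of $\widetilde{\mathbf{N}}_q$, I would introduce the map $\widetilde{\Phi}:E_1/E_{1,0}\to E_2/E_{2,0}$ given by $\widetilde{\Phi}([\phi]_{E_1}):=[\phi'-\phi^2/2]_{E_2}$. A direct check shows $\widetilde{\Phi}$ is well-defined (differentiation carries $E_{1,0}$ into $E_{2,0}$, as already observed in the if-part of Proposition \ref{ase-2}, and $(\phi_1-\phi_2)(\phi_1+\phi_2)\in E_{2,0}$ whenever $\phi_1-\phi_2\in E_{1,0}$) and locally Lipschitz continuous. Since $S_f=N_f'-N_f^2/2$, the identity $\widetilde{\Lambda}_2=\widetilde{\Phi}\circ\widetilde{\Lambda}_1$ holds; combined with Proposition \ref{abers-1}, the composition $\widetilde{\Lambda}_1^{-1}=\widetilde{\Lambda}_2^{-1}\circ\widetilde{\Phi}|_{\widetilde{\mathbf{N}}_q}$ is continuous. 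Openness of $\widetilde{\mathbf{N}}_q$ in $E_1/E_{1,0}$ follows from openness of $\mathbf{N}_q$ in $E_1$ (a classical consequence of Proposition \ref{bers-1}) together with openness of the quotient projection $\pi:E_1\to E_1/E_{1,0}$.

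The principal obstacle is the uniform control of the constant $C(f_n,\mathbf{h}_n)$ appearing in Proposition \ref{key}, since in our setting both $f_n$ and $\mathbf{h}_n$ vary with $n$. Tracking the proof of Proposition \ref{key}, $C$ depends on $\|\mu_{f_n}\|_\infty$, on $K_{\mathbf{h}_n}$, and on $\max_{\zeta\in f_n(\overline{\Delta(2)})}|\mathbf{h}_n(\zeta)|$ (via the term $\mathbf{M}/\mathbf{d}^2$). Since the representatives $\nu_n'$ can be arranged so that $\|\mu_{f_n}\|_\infty$ stays bounded away from $1$, so that $K_{\mathbf{h}_n}\to 1$, and so that $\mathbf{h}_n\to\mathrm{id}$ uniformly on the compacta defining $\mathbf{M}$, the product $C(f_n,\mathbf{h}_n)b(\mathbf{h}_n)$ indeed vanishes, closing the continuity argument.
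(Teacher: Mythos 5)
Your overall strategy is genuinely different from the paper's. The paper derives Proposition \ref{abers-2} as an immediate corollary of Proposition \ref{abers-1} and Lemma \ref{dengjia}: it shows that the map $\Xi:[N_{f_\mu}]_{E_1}\mapsto[S_{f_\mu}]_{E_2}$ is a homeomorphism from $\widetilde{\mathbf{N}}_q$ to $\widetilde{\mathbf{S}}_q$ by a purely \emph{topological} argument --- since the quotient projections $P_1,P_2$ are continuous open maps and $\Lambda_1\circ\Lambda_2^{-1}:\mathbf{S}_q\to\mathbf{N}_q$ is a homeomorphism, one transports open sets around the commutative square and reads off that $\Xi$ and $\Xi^{-1}$ are both continuous; then $\widetilde{\Lambda}_1=\Xi^{-1}\circ\widetilde{\Lambda}_2$ is a homeomorphism with no further estimates needed. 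Your $\widetilde{\Phi}$ is exactly $\Xi$ realized by the explicit formula $[\phi]\mapsto[\phi'-\phi^2/2]$, and your verification that $\widetilde{\Phi}$ is well-defined and locally Lipschitz on $E_1/E_{1,0}$ is correct and is a nice quantitative refinement of the ``$\Xi$ is continuous'' half; together with Proposition \ref{abers-1} it does yield continuity of $\widetilde{\Lambda}_1^{-1}=\widetilde{\Lambda}_2^{-1}\circ\widetilde{\Phi}|_{\widetilde{\mathbf{N}}_q}$.

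The problematic half of your proposal is the direct estimate for continuity of $\widetilde{\Lambda}_1$ itself, which you base on Proposition \ref{key}. As you note, the constant $C(f_n,\mathbf{h}_n)$ there is the obstacle, but your proposed fix does not go through. The constant depends on the \emph{global} dilatations $K_{f_n}$, $K_{\mathbf{h}_n}$ and on the compact-set bound $\mathbf{M}=\max_{\zeta\in f_n(\overline{\Delta(2)})}|\mathbf{h}_n(\zeta)|$. The hypothesis $d_{AT}([\mu_n]_{AT},[\mu^*]_{AT})\to 0$ only controls the \emph{boundary} dilatation $b(\mathbf{h}_n)=H_n\to 0$ (via (\ref{dila}) and (\ref{bn})); it places no constraint on $\|\mu_{\mathbf{h}_n}\|_\infty$ on compact subsets of $\Delta^*$, so $K_{\mathbf{h}_n}$ need not approach $1$ and $\mathbf{h}_n$ need not converge to the identity on any compactum, no matter which representatives you take within the two $AT$-classes. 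Consequently the products $C(f_n,\mathbf{h}_n)\,b(\mathbf{h}_n)$ and $\mathbf{M}/\mathbf{d}^2$ are not controlled, and the claimed $\limsup_{|z|\to 1^-}|N_{g_n}-N_{f_n}|(1-|z|^2)\to 0$ does not follow. This is precisely the difficulty the paper's Lemma \ref{dengjia} is designed to avoid: once you have the homeomorphism $\Xi$, continuity of $\widetilde{\Lambda}_1$ falls out of the corresponding fact for $\widetilde{\Lambda}_2$, with no constant to control. I would replace your direct estimate with the paper's topological argument (or, equivalently, show that your $\widetilde{\Phi}|_{\widetilde{\mathbf{N}}_q}$ is not merely continuous but a homeomorphism onto $\widetilde{\mathbf{S}}_q$, which again requires the open-map argument for $P_1$); Proposition \ref{key} is the right tool for the equivalence statement in Proposition \ref{ase-2}, but is not well-suited for a uniform-in-$n$ continuity estimate here.
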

\begin{remark}
The mapping $\widetilde{\Lambda}_1$ is called {\em asymptotic Pre-Bers mapping}.
\end{remark}

For $\mu \in \mathcal{M}(\Delta^{*})$, we know from Proposition \ref{ase-1} and \ref{ase-2} that the mapping
 $$\Xi: [N_{f_{\mu}}]_{E_1}  \mapsto   [S_{f_{\mu}}]_{E_2}$$
is well-defined and bijective from ${\widetilde{\mathbf{N}}_q}$ to
${\widetilde{\mathbf{S}}_q}$. Moreover, we have

\begin{lemma}\label{dengjia}
The mapping $\Xi: [N_{f_{\mu}}]_{E_1} \mapsto [S_{f_{\mu}}]_{E_2}$ is a homeomorphism from ${\widetilde{\mathbf{N}}_q}$ in $E_1/E_{1, 0}$ to
${\widetilde{\mathbf{S}}_q}$ in $E_2/E_{2, 0}$.
\end{lemma}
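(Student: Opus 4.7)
By the discussion preceding the statement, $\Xi$ is already known to be well-defined and bijective via Propositions \ref{ase-1} and \ref{ase-2}, so the content is to prove that both $\Xi$ and $\Xi^{-1}$ are continuous.

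\textbf{Continuity of $\Xi$.} The plan is to base this on the algebraic identity $S_f = N_f' - \tfrac{1}{2} N_f^2$ and show that the nonlinear operator $\Phi : \phi \mapsto \phi' - \tfrac{1}{2}\phi^2$ is continuous from $E_1$ to $E_2$ and descends to a continuous map $E_1/E_{1,0} \to E_2/E_{2,0}$, of which $\Xi$ is the restriction to $\widetilde{\mathbf{N}}_q$. The linear part satisfies $\|\phi'\|_{E_2} \leq C\|\phi\|_{E_1}$ by Cauchy's inequality on the disk of radius $(1-|z|)/4$ around each $z$; applying the same estimate only for $|z|$ close to $1$ gives $\phi \in E_{1,0} \Rightarrow \phi' \in E_{2,0}$. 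The quadratic part is handled by the pointwise bound $|\phi\psi|(1-|z|^2)^2 \leq \|\phi\|_{E_1}\cdot|\psi|(1-|z|^2)$, which makes $(\phi,\psi)\mapsto\phi\psi$ bilinear continuous $E_1 \times E_1 \to E_2$ and additionally carries $E_1 \times E_{1,0}$ into $E_{2,0}$. Together these make $\Phi$ descend to a continuous map on quotients.

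\textbf{Continuity of $\Xi^{-1}$.} This is the main obstacle, since inverting $\Phi$ amounts to solving a Riccati equation and there is no direct continuous inverse. My plan is to proceed indirectly through the asymptotic Teichm\"uller space, using Proposition \ref{abers-1}. Suppose $[S_{f_{\mu_n}}]_{E_2} \to [S_{f_\mu}]_{E_2}$ in $\widetilde{\mathbf{S}}_q$; then Proposition \ref{abers-1} gives $d_{AT}([\mu_n]_{AT}, [\mu]_{AT}) \to 0$. One then chooses representatives $\nu_n \sim_T \mu_n$ so that $\mathbf{h}_n := f_{\nu_n}\circ f_\mu^{-1}$ has boundary dilatation $b(\mathbf{h}_n) \to 0$, and by modifying $\nu_n$ in the interior of $\Delta^*$ if necessary arranges that the global dilatations $K_{\mathbf{h}_n}$ remain uniformly bounded. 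Since $\nu_n\sim_T\mu_n$ forces $N_{f_{\nu_n}}=N_{f_{\mu_n}}$ on $\Delta$, it suffices to estimate $N_{f_{\nu_n}}-N_{f_\mu}$. For each fixed $\varepsilon\in(0,\tfrac13(1-\|\mu_f\|_\infty))$ and all large $n$, Proposition \ref{key} applied with $f=f_\mu|_\Delta$ fixed and $\mathbf{h}=\mathbf{h}_n$ yields
$$|N_{\mathbf{h}_n}(\zeta)|\cdot\mathrm{dist}(\zeta,\partial f(\mathbb{T})) \leq C(b(\mathbf{h}_n)+\varepsilon)$$
for $\zeta$ close enough to $\partial f(\mathbb{T})$. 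Combining the chain-rule identity $N_{f_{\nu_n}}-N_{f_\mu}=(N_{\mathbf{h}_n}\circ f_\mu)\cdot f_\mu'$ with the bound $(1-|z|^2)|f_\mu'(z)|\leq 4\,\mathrm{dist}(f_\mu(z),\partial f_\mu(\mathbb{T}))$ from Proposition \ref{pro-1} gives
$$|N_{f_{\nu_n}}(z)-N_{f_\mu}(z)|(1-|z|^2) \leq 4C(b(\mathbf{h}_n)+\varepsilon)$$
for $|z|$ near $1$, whence $\|[N_{f_{\mu_n}}]_{E_1}-[N_{f_\mu}]_{E_1}\|_{E_1/E_{1,0}}\to 0$.

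\textbf{Main technical point.} The crux is the uniformity in $n$ of the constant $C$ in Proposition \ref{key}. Inspection of its proof shows that $C(f,\mathbf{h})$ depends on $\mathbf{h}$ only through $K_\mathbf{h}$ and through $\max_{\zeta\in f(\overline{\Delta(2)})}|\mathbf{h}(\zeta)|$; both can be kept uniformly bounded by arranging $\nu_n$ to coincide with $\mu$ outside a shrinking collar of $\mathbb{T}$ in $\Delta^*$. The existence of such representatives is guaranteed by $d_{AT}$-convergence together with the freedom in choosing quasiconformal extensions within a single $T$-equivalence class, and this selection of representatives is the delicate step that has to be carried out carefully.
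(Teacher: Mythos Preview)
Your argument takes a genuinely different and much harder route than the paper's. The paper proves the lemma by a three-line diagram chase: the quotient projections $P_j:E_j\to E_j/E_{j,0}$ are continuous \emph{open} maps (standard for Banach-space quotients by closed subspaces), the map $\Lambda:=\Lambda_1\circ\Lambda_2^{-1}:\mathbf{S}_q\to\mathbf{N}_q$ is a homeomorphism by Propositions~\ref{bers-1} and~\ref{bers-2}, and the commutative identity $\Xi^{-1}\circ P_2=P_1\circ\Lambda$ then gives $\Xi^{-1}(\mathbf{O}_S)=P_1\bigl(\Lambda(P_2^{-1}(\mathbf{O}_S))\bigr)$ open for every open $\mathbf{O}_S\subset\widetilde{\mathbf{S}}_q$, and symmetrically for $\Xi$. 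Neither Proposition~\ref{key} nor Proposition~\ref{abers-1} is used; in fact the paper's logic runs the other way, deducing Proposition~\ref{abers-2} \emph{from} Lemma~\ref{dengjia} together with Proposition~\ref{abers-1}.

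Your direct argument for the continuity of $\Xi$ via $S_f=N_f'-\tfrac12 N_f^2$ is correct and pleasant. Your argument for $\Xi^{-1}$, however, has two soft spots beyond the uniformity issue you already flag. First, to pass from a bound on $\limsup_{|z|\to1}|N_{f_{\mu_n}}-N_{f_\mu}|(1-|z|^2)$ to a bound on the quotient norm $\|[N_{f_{\mu_n}}]-[N_{f_\mu}]\|_{E_1/E_{1,0}}$ you are implicitly using that the quotient norm is controlled by the boundary $\limsup$; this is true but is itself a lemma you have not supplied. Second, your claim that $C(f,\mathbf{h})$ in Proposition~\ref{key} depends on $\mathbf{h}$ only through $K_{\mathbf{h}}$ and $\max|\mathbf{h}|$ understates matters: the proof also brings in the bi-Lipschitz constants of the Riemann maps $\pi_1,\pi_2$ (with $\pi_2$ depending on the domain $\mathbf{h}_n(f(\Delta(R_2)))$), the constant $C_{12}(g)$ of Remark~\ref{remark}, and the threshold $\delta$. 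Your collar construction may well tame all of these, but verifying it is exactly the labour that the paper's diagram argument renders unnecessary.
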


\begin{proof}
We let
 $$\mathcal{T}:=\{\phi: \phi=N_{f}(z), f\in \mathcal{S}_q\}.$$
$\mathcal{T}$ can be seen as one model of {\em universal Teichm\"uller curve}, see \cite{Bers, T}.  It is known from \cite{Zhur} that $\mathcal{T}$ is an open subset of $E_1$.  Let $P_{j}$ be the projection from $E_j$ to $E_j/E_{j,0},  j=1, 2$.  It is easy to see that $P_1(\mathcal{T})=P_1(\mathbf{N}_q)=\widetilde{\mathbf{N}}_q$.  Since $P_{1}$  is an open mapping, then we see that ${\widetilde{\mathbf{N}}_q}$ is an open subset of $E_1/E_{1, 0}$. 

Consider the mapping $\Gamma(\phi)=\phi'-\frac{1}{2}\phi^2$, we know that
$\Gamma$ is continuous from $\mathcal{T}$ to $\mathbf{S}_q$ and $\Gamma(\mathcal{T})=\Gamma(\mathbf{N}_q)=\mathbf{S}_q$.  We have the following commutative diagram.
\begin{displaymath}
    \xymatrix{
        \mathcal{T} \ar[r]^{\Gamma} \ar[d]^{P_1} & \mathbf{S}_q   \ar[d]^{P_2} \\
        \widetilde{\mathbf{N}}_q \ar[r]^{\Xi} & \widetilde{\mathbf{S}}_q  \ar[l]
    }
\end{displaymath}
Now, for any open subset $\mathbf{O}_S$ of $\widetilde{\mathbf{S}}_q$, we obtain that $P_2^{-1}(\mathbf{O}_S):=\widehat{\mathbf{O}}_S$ is open in  $\mathbf{S}_q$ since $P_2$ is continuous. Then we see that $\Gamma^{-1} \circ P_2^{-1}(\mathbf{O}_S)$ is open in $\mathcal{T}$. 
On the other hand, we have $P_1\circ \Gamma^{-1}(\widehat{\mathbf{O}}_S)=\Xi^{-1}(\mathbf{O}_S)$. Then it follows that
$$\Xi^{-1}(\mathbf{O}_S)=P_1\circ \Gamma^{-1}(\widehat{\mathbf{O}}_S)=P_1\circ \Gamma^{-1} \circ P_2^{-1}(\mathbf{O}_S)$$
is open in $\widetilde{\mathbf{N}}_q$ since $P_1$ is an open mapping.  This means that $\Xi$ is continuous. 

On the other hand, let $\Lambda=\Lambda_2\circ \Lambda_1^{-1}$ be the homeomorphism from $\mathbf{N}_q$ to $\mathbf{S}_q$. We have the following commutative diagram. 
\begin{displaymath}
    \xymatrix{
        \mathbf{N}_q \ar[r]^{\Lambda} \ar[d]^{P_1} & \mathbf{S}_q \ar[l]  \ar[d]^{P_2} \\
        \widetilde{\mathbf{N}}_q \ar[r]^{\Xi} &        \widetilde{\mathbf{S}}_q  \ar[l]
    }
\end{displaymath}
Then, for any open subset $\mathbf{O}_N$ of $\widetilde{\mathbf{N}}_q$, we obtain that $P_1^{-1}(\mathbf{O}_N)$ is open in  $\mathbf{N}_q$ since $P_1$ is continuous. 
It follows that $\Lambda \circ P_1^{-1}(\mathbf{O}_N)$ is open in $\mathbf{S}_q$ since $\Lambda$ is a homeomorphism. 
Consequently, we see that 
$$\Xi(\mathbf{O}_N)=P_2\circ \Lambda \circ P_1^{-1}(\mathbf{O}_N)$$ is open in $\widetilde{\mathbf{S}}_q$ since $P_2$ is an open mapping. This proves that $\Xi^{-1}$ is continuous.  The lemma is proved. \end{proof}

\begin{proof}[Proof of Proposition \ref{abers-2}]
Proposition \ref{abers-2} follows from Proposition \ref{abers-1} and Lemma \ref{dengjia}.
\end{proof}

The following result is also needed in the proof of Theorem \ref{th-3}.
\begin{lemma}\label{well}
Let $\mu, \nu \in \mathcal{M}({\Delta}^{*})$. For each $t\in \mathbb{R}$, if $\mu$ is asymptotically equivalent to $\nu$, then $\beta_{f_{\mu}}(t)=\beta_{f_{\nu}}(t)$.
\end{lemma}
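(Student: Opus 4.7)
The plan is to combine the characterization of asymptotic equivalence in terms of the Pre-Schwarzian derivative (Proposition \ref{ase-2}) with the stability principle for integral means spectra (Lemma \ref{key-lemma}). The case $t=0$ is trivial, so I assume $t\neq 0$. Both $f_\mu$ and $f_\nu$ belong to $\mathcal{S}$, being normalized conformal maps of $\Delta$. Since $\mu$ is asymptotically equivalent to $\nu$, Proposition \ref{ase-2} gives $N_{f_\nu}-N_{f_\mu}\in E_{1,0}$, i.e., $(1-|z|^2)|N_{f_\nu}(z)-N_{f_\mu}(z)|\to 0$ as $|z|\to 1^-$. Consequently, for every $\varepsilon>0$ there exists $r=r(\varepsilon)\in(0,1)$ such that the hypothesis \eqref{condition} of Lemma \ref{key-lemma} holds with $f=f_\mu$ and $g=f_\nu$.

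I then split by the sign of $\beta_{f_\mu}(t)$. If $\beta_{f_\mu}(t)>0$, part (1) of Lemma \ref{key-lemma} gives $|\beta_{f_\nu}(t)-\beta_{f_\mu}(t)|\leq |t|\varepsilon$ for every sufficiently small $\varepsilon>0$; letting $\varepsilon\to 0^+$ yields $\beta_{f_\nu}(t)=\beta_{f_\mu}(t)$. If $\beta_{f_\mu}(t)=0$, part (2) gives $\beta_{f_\nu}(t)\leq |t|\varepsilon$ for every $\varepsilon>0$, hence $\beta_{f_\nu}(t)\leq 0$. Combined with the standard lower bound $\beta_g(t)\geq 0$ for every $g\in \mathcal{S}$ (an immediate consequence of Jensen's inequality applied to the nonvanishing analytic branch of $(f')^{t/2}$, which equals $1$ at the origin and forces $\int_0^{2\pi}|f'(re^{i\theta})|^t\,d\theta\geq 2\pi$ for all $r\in(0,1)$), I conclude $\beta_{f_\nu}(t)=0=\beta_{f_\mu}(t)$.

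The main obstacle has already been absorbed into the preceding development: the passage from asymptotic equivalence of the Beltrami coefficients to the pointwise vanishing of $(1-|z|^2)|N_{f_\nu}-N_{f_\mu}|$ as $|z|\to 1^-$ is precisely the content of Proposition \ref{ase-2}, which in turn rests on the delicate Proposition \ref{key}. Granted those results, what remains is a short case split in the sign of $\beta_{f_\mu}(t)$, closed off by the elementary observation $\beta_f(t)\geq 0$ on $\mathcal{S}$; no further technical work is needed.
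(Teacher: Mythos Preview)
Your proposal is correct and follows essentially the same route as the paper: invoke Proposition~\ref{ase-2} to pass from asymptotic equivalence to the vanishing of $(1-|z|^2)|N_{f_\nu}-N_{f_\mu}|$ near the boundary, then feed this into Lemma~\ref{key-lemma} and let $\varepsilon\to 0$ in the two cases $\beta_{f_\mu}(t)>0$ and $\beta_{f_\mu}(t)=0$. The only addition you make is to spell out why $\beta_{f_\nu}(t)\ge 0$ in the second case via the sub-mean-value property of $|f'|^t$ (your ``Jensen'' remark); the paper leaves this implicit, so your version is slightly more self-contained.
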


\begin{remark}
In particular, $\beta_{f_{\mu}}(t)=0$ for any  $t\in\mathbb{R}$ if $f_{\mu}$ is an asymptotically conformal mapping. Here we say $f_{\mu}$ is an {\em asymptotically conformal mapping} if $\mu$ is  asymptotically equivalent to $0$. From Lemma \ref{well}, we see that $$B_{b}(t)=\sup\limits_{[\mu]_{AT}\in AT}\beta_{f_{\mu}}(t)$$ for each $t\in \mathbb{R}$.
\end{remark}

\begin{proof}[Proof of Lemma \ref{well}]
The case $t=0$ is obvious, we only consider $t\neq 0$.  When $\beta_{f_{\mu}}(t)=\gamma >0$.
Since $\mu$ is asymptotically equivalent to $\nu$, then we  know from Proposition \ref{ase-2} that
\begin{eqnarray}
\|N_{f_{\mu}}(z)-N_{f_{\nu}}(z)\|_{E_1}=|(N_{\mathbf{h}}\circ f_{\mu}(z))\cdot f'_{\mu}(z)|(1-|z|^2)\rightarrow 0,\,\, {\text as}\,\,\, |z| \rightarrow 1^{-}. \nonumber
\end{eqnarray}
Here $\mathbf{h}=f_{\nu}\circ f_{\mu}^{-1}$.  It follows that, for any $\varepsilon\in (0, \gamma)$, there is an $r\in (0,1)$ such that  $$\sup\limits_{|z|\in (r, 1)}|N_{f_{\mu}}(z)-N_{f_{\nu}}(z)|(1-|z|^2) <\varepsilon/|t|.$$
By (1) of Lemma \ref{key-lemma}, we have $|\beta_{f_{\mu}}(t)-\beta_{f_{\nu}}(t)|\leq \varepsilon.$ This implies that $\beta_{f_{\mu}}(t)=\beta_{f_{\nu}}(t)$.
When $\beta_{f_{\mu}}(t)=0$. For any $\varepsilon>0$, by using (2) of Lemma \ref{key-lemma}, we can similarly prove that $\beta_{f_{\nu}}(t)\leq \varepsilon.$ This means that $\beta_{f_{\nu}}(t)=0$.
The proof of Lemma \ref{well} is finished.
\end{proof}

We next present the proof of Theorem \ref{th-3}.
\begin{proof}[Proof of Theorem \ref{th-3}]
Lemma \ref{well} tells us that $I_{AT}$ is well-defined. In view of Proposition \ref{abers-2}, it suffices to prove that, for each $t\neq0$, the mapping
$$\Theta: [N_{f_{\mu}}]_{E_1}  \mapsto \beta_{f_{\mu}}(t),\, \mu\in \mathcal{M}(\Delta^{*})$$ is continuous on $\widetilde{\mathbf{N}}_q$.

For given $\mu \in \mathcal{M}(\Delta^{*})$.  When $\beta_{f_{\mu}}(t)=\gamma>0,$ for any $\varepsilon\in (0, \gamma)$, if some $N_{f_{\nu}}$ satisfies that
$$\|[N_{f_{\mu}}]_{E_1}-[N_{f_{\nu}}]_{E_1}\| < \varepsilon/|t|.$$
Then we know that there is a $\phi \in E_{1,0}$ such that
$$|N_{f_{\mu}}(z)-N_{f_{\nu}}(z)+\phi(z)|(1-|z|^2)< \varepsilon/|t|. $$
Consequently, we see that there is an $r\in (0,1)$ such that
$$\sup\limits_{|z|\in (r,1)}|N_{f_{\mu}}(z)-N_{f_{\nu}}(z)|(1-|z|^2)< \varepsilon/|t|.$$
It follows from (1) of Lemma \ref{key-lemma} again that $|\beta_{f_{\nu}}(t)-\beta_{f_{\mu}}(t)|\leq \varepsilon.$ This means that $\Theta$ is continuous at $[N_{f_{\mu}}]_{E_1}$ in $\widetilde{\mathbf{N}}_q$.  When $\beta_{f_{\mu}}(t)=0$, for any $\varepsilon>0$, we can similarly prove that $\beta_{f_{\nu}}(t)\leq \varepsilon$ by (2) of Lemma \ref{key-lemma} and so that $\Theta$ is continuous at $[N_{f_{\mu}}]_{E_1}$ in this case. This proves Theorem \ref{th-3}.
\end{proof}
\begin{remark}
Note that $d_{AT}([\mu]_{AT}, [\nu]_{AT}) \leq d_{T}([\mu]_T, [\nu]_T)$ for any $\mu, \nu \in \mathcal{M}(\Delta^{*})$, we see that the statement that $I_{AT}$ is continuous on $AT$ also implies Theorem \ref{th-1}.
\end{remark}

\section{\bf Final results and remarks}
\subsection{A final main theorem} Let $A$ be a subset of $\widehat{\mathbb{C}}$.  A  {\em holomorphic motion} of $A$ is a map $\mathbf{H}: \Delta \times A \rightarrow \widehat{\mathbb{C}}$  such that:

$\bullet$ for each fixed $z\in A$, the map $\lambda \mapsto  \mathbf{H}(\lambda, z)$ is holomorphic in $\Delta$;

$\bullet$  for each fixed $\lambda \in \Delta$, the map $z \mapsto  \mathbf{H}(\lambda, z)$ is injective in $\Delta$;

$\bullet$  for all $z\in A$, we have $\mathbf{H}(0, z)=z$.

Holomorphic motions were introduced in \cite{MSS} by Ma\~n\'e, Sad and Sullivan, who proved the $\lambda$-lemma. Slodkowski later established in \cite{Slo} the extended $\lambda$-lemma, which confirmed a conjecture of Sullivan and Thurston \cite{ST}. The theory of holomorphic motions have many applications in complex analysis and holomorphic dynamics, see \cite{AIM}. Holomorphic motions are closely related to quasiconformal mappings. It is known that
\begin{proposition}\label{last}
Let $\mu\in \mathcal{M}(\Delta^{*})$ and let $k=\|\mu\|_{\infty}$. Then there exists a (canonical) holomorphic motion $\mathbf{H}: \Delta \times \widehat{\mathbb{C}}\rightarrow \widehat{\mathbb{C}}$ such that $\mathbf{H}(k, z)=f_{\mu}(z)$. Moreover, when $k=\|\mu\|_{\infty}>0$, for each fixed $\lambda\in \Delta$, $\mathbf{H}(\lambda,z)$ is a quasiconformal mapping from $\widehat{\mathbb{C}}$ to itself with
$\mu_{\mathbf{H}(\lambda, z)|_{\Delta}}=0,\,\, z\in \Delta$ and $\mu_{\mathbf{H}(\lambda, z)|_{\Delta^{*}}}=\lambda \cdot \frac{\mu(z)}{\|\mu\|_{\infty}},\, z\in \Delta^{*}.$
\end{proposition}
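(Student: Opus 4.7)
The plan is to construct $\mathbf{H}$ explicitly by scaling $\mu$ by a complex parameter and then invoking the Ahlfors--Bers theorem on holomorphic dependence of normalized solutions of the Beltrami equation on parameters.

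First I would define, for each $\lambda \in \Delta$, a Beltrami coefficient $\mu_\lambda$ on $\widehat{\mathbb{C}}$ by
\[
\mu_\lambda(z):=\begin{cases} \dfrac{\lambda}{\|\mu\|_\infty}\,\mu(z), & z \in \Delta^{*},\\[2pt] 0, & z \in \Delta,\end{cases}
\]
so that $\|\mu_\lambda\|_\infty = |\lambda|<1$ and hence $\mu_\lambda\in M(\Delta^{*})$. Let $f_{\mu_\lambda}$ be the unique quasiconformal self-map of $\widehat{\mathbb{C}}$ with Beltrami coefficient $\mu_\lambda$ and the normalization $f_{\mu_\lambda}(0)=0$, $f'_{\mu_\lambda}(0)=1$, $f_{\mu_\lambda}(\infty)=\infty$, exactly as in the definition of $f_\mu$ given earlier. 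I then set $\mathbf{H}(\lambda,z):=f_{\mu_\lambda}(z)$, which is the natural candidate for the canonical holomorphic motion associated with $\mu$.

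Next I would verify the three defining axioms of a holomorphic motion. The condition $\mathbf{H}(0,z)=z$ is immediate since $\mu_0\equiv 0$ forces $f_{\mu_0}$ to be the identity. For each fixed $\lambda$, the map $z\mapsto f_{\mu_\lambda}(z)$ is a quasiconformal homeomorphism of $\widehat{\mathbb{C}}$ and in particular is injective, which is the second axiom. Taking $\lambda=k=\|\mu\|_\infty$ gives $\mu_k=\mu$, hence $\mathbf{H}(k,z)=f_\mu(z)$, and the prescribed formulas $\mu_{\mathbf{H}(\lambda,\cdot)|_\Delta}=0$ and $\mu_{\mathbf{H}(\lambda,\cdot)|_{\Delta^{*}}}=(\lambda/\|\mu\|_\infty)\mu$ are built into the construction.

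The remaining axiom---holomorphy of $\lambda\mapsto f_{\mu_\lambda}(z)$ for each fixed $z\in \widehat{\mathbb{C}}$---is the only nontrivial point and is the sole place where a substantive tool is needed. The required fact is the Ahlfors--Bers theorem on analytic dependence of normalized solutions of the Beltrami equation on parameters: whenever $\lambda\mapsto \nu_\lambda\in M(\Delta^{*})$ is holomorphic as a family (in the sense that for a.e.\ $z$ the map $\lambda\mapsto \nu_\lambda(z)$ is holomorphic, with a uniform $L^\infty$ bound on compacta in the parameter), then $\lambda\mapsto f_{\nu_\lambda}(z)$ is holomorphic for every fixed $z$. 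In the present situation $\lambda\mapsto \mu_\lambda$ is affine, even linear, in $\lambda$, so the hypotheses are trivially met and the theorem applies. I would cite this result from a standard reference such as \cite{AIM} or \cite{LV}; once invoked, all the stated properties of $\mathbf{H}$ follow immediately and no further obstacle remains.
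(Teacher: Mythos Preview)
Your construction is correct and is exactly the standard one. Note, however, that the paper does not give its own proof of this proposition: it is introduced with ``It is known that'' and stated without argument, as a background fact from the theory of holomorphic motions and quasiconformal mappings. So there is no proof in the paper to compare against; your proposal simply supplies the (standard) details that the paper omits.
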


\begin{remark}\label{frem}  For any $\mu\in \mathcal{M}(\Delta^{*})$ with $|\mu\|_{\infty}=k$.  We see from Proposition \ref{last} that there is a holomorphic motion $\mathbf{H}: \Delta \times \widehat{\mathbb{C}} \rightarrow \widehat{\mathbb{C}}$ such that $\mathbf{H}(k, z)=f_{\mu}(z)$. We denote $\mathbf{H}|_{\Delta\times \Delta}:={\mathbf{H}}_{\lambda}(z)$. For fixed $\lambda\in \Delta$, ${\mathbf{H}}_{\lambda}(z)$ is univalent in $\Delta$ and ${\mathbf{H}}_{k}(z)=f_{\mu}(z)$ for any $z\in \Delta$. Then we know that, for fixed $z\in \Delta$,
$$\lambda  \mapsto  [{\mathbf{H}}'_{\lambda}(z)]^t$$
is holomorphic in $\Delta$. For fixed $r\in (0,1)$, by \cite[Theorem 2.4.8]{Ran}, we have
$$\lambda  \mapsto \int_{0}^{2\pi}|{\mathbf{H}}'_{\lambda}(re^{i\theta})|^t d\theta$$
is subharmonic in $\Delta$. Note that
$$\beta_{{\mathbf{H}}_{\lambda}}(t)=\lim\limits_{\gamma\rightarrow 1^{-}} \sup\limits_{r\in (\gamma,1)}\frac{\log \int_{0}^{2\pi}|{\mathbf{H}}'_{\lambda}(re^{i\theta})|^td\theta}{|\log(1-r)|}.$$
Then, by the potential theory,  it is reasonable to guess that $\lambda \mapsto \beta_{{\mathbf{H}}_{\lambda}}(t)$ may satisfy the maximum modulus principle and so that $\beta_{f_{\mu}}(t)<B_b(t)$
for all $\mu \in \mathcal{M}(\Delta^{*})$ when $t\neq 0$. We will prove that this guess is true. 
\end{remark}
\begin{theorem}\label{main-f}
Let $t\neq 0$. For any $\mu\in \mathcal{M}(\Delta^{*})$, we have 
\begin{equation}\label{m-est}
\beta_{f_{\mu}}(t)<B_b(t).
\end{equation}
 \end{theorem}
 
To prove Theorem \ref{main-f}, we need the following lemma. 
\begin{lemma}\label{m-l}
Let $\mu\in \mathcal{M}(\Delta^{*})$. {\bf (1)} If $\beta_{f_{\mu}}(t_1)>0$ for some $t_1>0$,  then $\beta_{f_{\mu}}(t)$ is strictly increasing on $[t_1, +\infty)$. 
 {\bf (2)} If $\beta_{f_{\mu}}(t_1)>0$ for some $t_1<0$,  then $\beta_{f_{\mu}}(t)$ is strictly decreasing on $(-\infty, t_1]$. 
\end{lemma}

\begin{proof}
When $t_1>0$, let $\beta=\beta_{f_{\mu}}(t_1)>0$.  Let $\varepsilon<\beta$ be a positive number, which will be fixed later. Then we see from the definition of integral means spectrum of $f_{\mu}$ that  
there is a sequence $\{r_n\}_{n=1}^{\infty}$  with $r_n<1$ and $r_n \rightarrow 1$ as $n\rightarrow \infty$ and such that
\begin{equation}
\int_{0}^{2\pi} |f_{\mu}'(r_ne^{i\theta})|^{t_1} d\theta > \frac{1}{(1-r_n)^{\beta-\varepsilon}}.\nonumber
\end{equation}
We let

$$\mathcal{A}_n:=\int_{0}^{2\pi} |f_{\mu}'(r_ne^{i\theta})|^{t_1} d\theta, \, \,\mathcal{D}_n:= \frac{1}{(1-r_n)^{\beta-\varepsilon}},\,\, n\in \mathbb{N},$$
and define 

$${\mathcal{E}}_n:=\{\theta:  |f_{\mu}'(r_ne^{i\theta})|^{t_1}> \frac{\mathcal{D}_n}{2\pi}, \, \theta\in [0, 2\pi)\}, $$
$$\mathcal{F}_n:=\{\theta:  |f_{\mu}'(r_ne^{i\theta})|^{t_1}\leq \frac{\mathcal{D}_n}{2\pi},\, \theta\in [0,2\pi)\}.$$
It is obvious that $\mathcal{E}_n \bigcup \mathcal{F}_n=[0, 2\pi)$ and $\mathcal{E}_n \bigcap \mathcal{F}_n=\emptyset.$ We denote 

$$I_{\mathcal{E}}=\int_{\mathcal{E}_n} |f_{\mu}'(r_ne^{i\theta})|^{t_1} d\theta, \,\, I_{\mathcal{F}}=\int_{\mathcal{F}_n} |f_{\mu}'(r_ne^{i\theta})|^{t_1} d\theta.$$

Case 1.  If $I_{\mathcal{E}} \geq \frac{1}{2}\mathcal{A}_n$, then for $\Delta t>0$,  we have 
\begin{eqnarray}
\int_{0}^{2\pi}|f_{\mu}'(r_ne^{i\theta})|^{t_1+\Delta t} d\theta & \geq & \int_{\mathcal{E}_n}|f_{\mu}'(r_ne^{i\theta})|^{t_1+\Delta t} d\theta  \nonumber \\
& \geq & \Big(\frac{\mathcal{D}_n}{2\pi}\Big)^{\frac{\Delta t}{t_1}} \int_{\mathcal{E}_n}|f_{\mu}'(r_ne^{i\theta})|^{t_1} d\theta \nonumber \\
&\geq &\frac{1}{2} \Big(\frac{\mathcal{D}_n}{2\pi}\Big)^{\frac{\Delta t}{t_1}}\mathcal{A}_n \nonumber \\
&\geq  & \frac{1}{2}[2\pi]^{-\frac{\Delta t}{t_1}} [\mathcal{D}_n]^{1+\frac{\Delta t}{t_1}}=:C_1(t_1, \Delta t)[\mathcal{D}_n]^{1+\frac{\Delta t}{t_1}}. \nonumber
\end{eqnarray}
Consequently, we obtain that 
\begin{eqnarray}\label{i}
\int_{0}^{2\pi}|f_{\mu}'(r_ne^{i\theta})|^{t_1+\Delta t} d\theta&\geq  & \frac{ C_1(t_1, \Delta t)}{(1-r_n)^{(\beta-\varepsilon)(1+\frac{\Delta t}{t_1})}}. 
\end{eqnarray}

Case 2. If $I_{\mathcal{F}} \geq \frac{1}{2}\mathcal{A}_n$, we set 
$$\mathcal{G}_n:=\{\theta:  |f_{\mu}'(r_ne^{i\theta})|^{t_1}\leq \frac{1}{16}\frac{\mathcal{D}_n}{2\pi},\, \theta\in [0, 2\pi)\},$$
$${\mathcal{H}_n}:=\{\theta:  \frac{1}{16}\frac{\mathcal{D}_n}{2\pi}<|f_{\mu}'(r_ne^{i\theta})|^{t_1}\leq \frac{\mathcal{D}_n}{2\pi},\,  \theta\in [0, 2\pi)\}.$$
We easily see that $\mathcal{G}_n \bigcup \mathcal{H}_n=\mathcal{F}_n$ and $\mathcal{G}_n \bigcap \mathcal{H}_n=\emptyset.$ Then we have
\begin{eqnarray}
\int_{\mathcal{H}_n}|f_{\mu}'(r_ne^{i\theta})|^{t_1} d\theta&=&\int_{\mathcal{F}_n}|f_{\mu}'(r_ne^{i\theta})|^{t_1} d\theta-\int_{\mathcal{G}_n}|f_{\mu}'(r_ne^{i\theta})|^{t_1} d\theta \nonumber \\
&\geq & \frac{1}{2}\mathcal{A}_n- 2\pi \frac{1}{16}\frac{\mathcal{D}_n}{2\pi}\geq  \frac{7}{16}\mathcal{D}_n. \nonumber
\end{eqnarray}
Hence, we have 
\begin{eqnarray}
\int_{0}^{2\pi}|f_{\mu}'(r_ne^{i\theta})|^{t_1+\Delta t} d\theta &\geq &\int_{\mathcal{H}_n}|f_{\mu}'(r_ne^{i\theta})|^{t_1+\Delta t} d\theta \nonumber \\
&\geq & \Big(\frac{\mathcal{D}_n}{32\pi}\Big)^{\frac{\Delta t}{t_1}}\int_{\mathcal{H}_n}|f_{\mu}'(r_ne^{i\theta})|^{t_1} d\theta \nonumber \\
&\geq & \frac{7}{16}[32\pi]^{-\frac{\Delta t}{t_1}}[\mathcal{D}_n]^{1+\frac{\Delta t}{t_1}}:=C_2(t_1, \Delta t)[\mathcal{D}_n]^{1+\frac{\Delta t}{t_1}}. \nonumber
\end{eqnarray}
Consequently, we obtain that 
\begin{eqnarray}\label{ii}
\int_{0}^{2\pi}|f_{\mu}'(r_ne^{i\theta})|^{t_1+\Delta t} d\theta&\geq  & \frac{ C_2(t_1, \Delta t)}{(1-r_n)^{(\beta-\varepsilon)(1+\frac{\Delta t}{t_1})}}. 
\end{eqnarray}

Now, for any $\Delta t>0$, taking $$\varepsilon=\frac{1}{2}\frac{\beta \Delta t}{t_1+\Delta t}\in (0, \beta),$$ we see that 
$$(\beta-\varepsilon)(1+\frac{\Delta t}{t_1})=\beta+\frac{\beta\Delta t}{2t_1}.$$
Thus, it follows from (\ref{i}) and (\ref{ii}) that 
\begin{eqnarray}\label{iii}
\int_{0}^{2\pi}|f_{\mu}'(r_ne^{i\theta})|^{t_1+\Delta t} d\theta&\geq  & \frac{ C_2(t_1, \Delta t)}{(1-r_n)^{\beta+\frac{\beta \Delta t}{2t_1}}}, \nonumber 
\end{eqnarray}
since $C_2(t_1, \Delta t)\leq C_1(t_1, \Delta t).$ This implies that 
$$\beta_{f_{\mu}}(t_1+\Delta t)\geq \beta+\frac{\beta \Delta t}{3t_1}>\beta=\beta_{f_{\mu}}(t_1).$$
This proves the part {\bf(1)} of the lemma.  

When $t_1<0, \beta_{f_{\mu}}(t_1)>0$, replacing $f'_{\mu}$ by $\frac{1}{f'_{\mu}}$ in the above arguments, we can similarly show that the part {\bf (2)} also holds.  The proof of Lemma \ref{m-l} is finished. 
\end{proof}

We now present the proof of Theorem \ref{main-f}. 

\begin{proof}[Proof of Theorem \ref{main-f}]
First, it is easy to see that (\ref{m-est}) obviously holds if $\beta_{f_{\mu}}(t)=0$.  We consider the case when $t>0, \beta_{f_{\mu}}(t)>0$. 
We define a locally univalent function $\mathbf{h}$ on the domain $\Omega:=f_{\mu}(\Delta)$ by 
\begin{equation}
\mathbf{h}(w)=\int_{0}^{w} [f'_{\mu}(\mathbf{g}(\zeta))]^{\varepsilon}d\zeta,\,\, w\in \Omega. \nonumber
\end{equation}
Here $\mathbf{g}=f_{\mu}^{-1}$, $\varepsilon>0$ is a small number and $[f'_{\mu}(z)]^{\varepsilon}=\exp(\varepsilon \log f'_{\mu}(z)), z \in \Delta$. Then 
\begin{equation}\label{f-d}
\mathbf{h}'\circ f_{\mu}(z)=[f_{\mu}'(z)]^{\varepsilon}, z\in \Delta,
\end{equation}
and 
\begin{equation}\label{s-d}
\mathbf{h}''\circ f_{\mu}(z) \cdot f'_{\mu}(z)=\varepsilon[f_{\mu}'(z)]^{\varepsilon-1}f''_{\mu}(z), z\in \Delta. 
\end{equation}
Consequently, we obtain from (\ref{f-d}) and (\ref{s-d}) that
\begin{eqnarray}
|N_{\mathbf{h}}(w)|\rho_{\Omega}^{-1}(w)&=&\Big|\frac{\mathbf{h}''(w)}{\mathbf{h}'(w)} \Big|\frac{1-|\mathbf{g}(w)|^2}{|\mathbf{g}'(w)|}\nonumber \\
&=&\Big|\frac{\mathbf{h}''\circ f_{\mu}(z)}{\mathbf{h}'\circ f_{\mu}(z)}\Big| \cdot |f'_{\mu}(z)|(1-|z|^2)\nonumber \\
&=& \varepsilon |N_{f_{\mu}}(z)|\rho_{\Delta}^{-1}(z) \nonumber \\
&\leq& 6\varepsilon.  \nonumber
\end{eqnarray} 
Hence, from \cite{Bec} or \cite{AG1}, \cite{Zhur}, we see that  $\mathbf{h}$ is bounded univalent in $\Omega$ and admits a quasiconformal extension (still denoted by $\mathbf{h}$)  to $\widehat{\mathbb{C}}$ when $\varepsilon$ small enough.  Now, we let  $\varepsilon$ be small enough so that $\mathbf{h}$ has a quasiconformal extension to $\widehat{\mathbb{C}}$. Then, let $\mathbf{F}(z)=\mathbf{h}\circ f_{\mu}(z), z\in \widehat{\mathbb{C}}$ and let $\mathbb{F}=\mathbf{F}|_{\Delta}$, we see that $\mathbb{F}$ belongs to $\mathcal{S}_q$, and for any $r\in (0, 1)$, $\theta \in [0,2\pi)$, it holds that
\begin{eqnarray}
|\mathbb{F}'(re^{i\theta})|^{t}=
|\mathbf{h}'\circ f_{\mu}(re^{i\theta})|^{t}|f'_{\mu}(re^{i\theta})|^{t}=|f'_{\mu}(re^{i\theta})|^{t+t\varepsilon}. \nonumber 
\end{eqnarray}
Thus we have $\beta_{\mathbb{F}}(t)$=$\beta_{f_{\mu}}(t+t\varepsilon)$. It follows from the part {\bf (1)} of Lemma \ref{m-l} that $\beta_{\mathbb{F}}(t)>\beta_{f_{\mu}}(t)$, which implies that $\beta_{f_{\mu}}(t)<B_b(t)$.  

We next consider the case when $t<0, \beta_{f_{\mu}}(t)>0$. Repeating the above arguments and by using the part {\bf (2)} of Lemma \ref{m-l}, we can show that $\beta_{f_{\mu}}(t)<B_b(t)$ is true in this case. The proof of Theorem \ref{main-f} is finished.   
\end{proof}

It is easy to see from Theorem \ref{main-f} that
\begin{corollary}\label{cor}
Let $t \neq 0$. Then we have  $\beta_f(t)<B_b(t)$ for any $f\in \mathcal{S}_q$.
\end{corollary}
\begin{remark}
This corollary tells us that the extremal function for $B_b(t)$ can not be from the class $\mathcal{S}_q$ when $t\neq 0$. This partially answers part {\bf (3)} of Question \ref{ques}.
\end{remark}

\subsection{Remarks}

We continue to consider the class ${\mathcal{T}}$.  For any $\phi$ belonging to $\overline{{\mathcal{T}}}$, the closure of ${{\mathcal{T}}}$ in $E_1$, there is a unique univalent function $f_{\phi}(z)$ determined as in (\ref{exa}) with $f_{\phi}\in \mathcal{S}$ and such that
$\phi(z)=N_{f_{\phi}}(z).$ By checking carefully the proof of Theorem \ref{th-2}, we can prove that
 \begin{proposition}\label{Curve}
For each $t\in\mathbb{R}$, the IMS functional $I_{\overline{\mathcal{T}}}: \phi\mapsto \beta_{f_{\phi}}(t)$ is continuous on $\overline{\mathcal{T}}$.
  \end{proposition}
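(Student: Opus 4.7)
The plan is to follow the proof of Theorem \ref{th-2} essentially verbatim, since the key lemmas used there (Lemma \ref{cri}, Lemma \ref{key-est}, Lemma \ref{key-lemma}) are stated for arbitrary pairs $f, g \in \mathcal{S}$ and never invoke membership in $\mathcal{S}_q^{\infty}$ or the existence of a quasiconformal extension fixing $\infty$. The only preliminary I would verify is that for each $\phi \in \overline{\mathcal{T}}$, the function $f_\phi$ defined by (\ref{exa}) actually belongs to $\mathcal{S}$.

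\textbf{Step 1.} Observe that $\mathcal{T} \subset \mathbf{N}$, because $\mathcal{S}_q \subset \mathcal{U}$ and $\mathbf{N}$ is by definition the set of Pre-Schwarzian derivatives of elements of $\mathcal{U}$. Since $\mathbf{N}$ is closed in $E_1$ by Proposition \ref{pro-2}, we obtain $\overline{\mathcal{T}} \subset \mathbf{N}$. Hence for every $\phi \in \overline{\mathcal{T}}$ the formula (\ref{exa}) yields a univalent function $f_\phi$ in $\Delta$ with $N_{f_\phi} = \phi$, and the normalization $f_\phi(0) = 0$, $f_\phi'(0) = 1$ built into (\ref{exa}) puts $f_\phi$ in $\mathcal{S}$.

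\textbf{Step 2.} Fix $t \neq 0$ (the case $t = 0$ is trivial) and $\psi \in \overline{\mathcal{T}}$. Split into two subcases according to the value of $\gamma := \beta_{f_\psi}(t)$.

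\emph{Case $\gamma > 0$.} Given $\varepsilon \in (0, \gamma)$ set $\delta = \varepsilon/|t|$. For any $\phi \in \overline{\mathcal{T}}$ with $\|\phi - \psi\|_{E_1} < \delta$, and for any $r \in (0,1)$, the hypothesis
\[
\sup_{|z| \in (r,1)} |N_{f_\phi}(z) - N_{f_\psi}(z)|(1 - |z|^2) < \varepsilon/|t|
\]
of Lemma \ref{key-lemma} holds. Applying part (1) of Lemma \ref{key-lemma} to the pair $(f_\psi, f_\phi) \in \mathcal{S} \times \mathcal{S}$ delivers $|\beta_{f_\phi}(t) - \gamma| \leq \varepsilon$.

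\emph{Case $\gamma = 0$.} For arbitrary $\varepsilon > 0$, the same choice $\delta = \varepsilon/|t|$ together with part (2) of Lemma \ref{key-lemma} yields $\beta_{f_\phi}(t) \leq \varepsilon$ whenever $\|\phi - \psi\|_{E_1} < \delta$.

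In either case, continuity of $I_{\overline{\mathcal{T}}}$ at $\psi$ follows.

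There is no serious obstacle: the argument reduces immediately to the already-established Theorem \ref{th-2} once one notes the inclusion $\overline{\mathcal{T}} \subset \mathbf{N}$. The only point that required any thought is that we must check the closure is taken in $E_1$ (so that Proposition \ref{pro-2} applies) and that the normalization in (\ref{exa}) places $f_\phi$ in $\mathcal{S}$ — neither of which uses any property specific to quasiconformal extensions of the form $f_\mu$. In particular, unlike the AT setting, no new result on the Pre-Schwarzian characterization of asymptotic equivalence is needed for this corollary.
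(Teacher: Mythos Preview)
Your proposal is correct and matches the paper's own treatment: the paper does not give a separate proof but simply states that ``by checking carefully the proof of Theorem \ref{th-2}, we can prove'' Proposition \ref{Curve}, which is precisely the reduction you carry out. Your Step 1 makes explicit the one point the paper leaves implicit (that $\overline{\mathcal{T}}\subset\mathbf{N}$ via Proposition \ref{pro-2}, so $f_\phi\in\mathcal{S}$), and Step 2 is exactly the argument of Theorem \ref{th-2} with $\overline{\mathbf{N}}_q$ replaced by $\overline{\mathcal{T}}$.
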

  
  \begin{remark}
 Although we know that $I_{\overline{\mathcal{T}}}$ is continuous on $\overline{\mathcal{T}}$, we can not conclude that  $I_{\overline{\mathcal{T}}}$ on $\overline{\mathcal{T}}$ attains a maximum, since $\overline{\mathcal{T}}$ is not compact in $E_1$.  On the other hand, we know that $\mathcal{S}$ is compact under the locally uniformly convergence topology. However, we find that the functional $I_{\mathcal{S}}: f \mapsto \beta_f(t), f\in \mathcal{S}$ is not continuous on $\mathcal{S}$ under this topology. An  easy example is $\kappa_r:=\kappa(rz), \, r\in (0,1)$, which is locally uniformly convergent to the Koebe function $\kappa$ as $r\rightarrow 1^{-}$, but $\beta_{\kappa_r}(t)=0$ for all $r\in (0,1)$ and $\beta_{\kappa}(t)=3t-1$ when $t>\frac{1}{3}$.
 \end{remark}

 \begin{remark} Corollary \ref{cor} tells us that, if the functional $I_{\overline{\mathcal{T}}}$  attains a maximum on $\overline{\mathcal{T}}$ for $t \neq 0$,  then there is an extremal function $f$ for $B_b(t)$ whose Pre-Schwarzian derivative $N_f$ lies in $\partial \mathcal{T}$. In fact, for some special cases, we know that $I_{\overline{\mathcal{T}}}$ attains a maximum at certain point in $\partial{\mathcal{T}}$.   For example,

 ($\mathbf{I}$) when $t\geq 2$, we know that $B_b(t)=t-1$ and $B_b(t)$ has an extremal function $\mathcal{F}:=-\log(1-z)$. We see that $N_{\mathcal{F}}$ belongs to $\partial \mathcal{T}$. Actually,  for $\gamma\in (0,1)$, let $f_{\gamma}(z):=[(1-z)^{1-\gamma}-1]/(\gamma-1).$ It is easy to see that $\lim\limits_{\gamma\rightarrow 1^{-}} \|N_{f_{\gamma}}-N_{\mathcal{F}}\|_{E_1}=0$ and we can check that $f_{a}(\mathbb{T})$ is a quasicircle for any
 $\gamma\in (0,1)$ and so that $N_{f_\gamma} \in \mathcal{T}$. Here we say a Jordan curve $\Gamma$ in $\mathbb{C}$ is a {\em quasicircle} if there is a constant $C(\Gamma)>0$ such that the diameter $l(z,w)$ of the smaller subarc $\wideparen{zw}$ of $\Gamma$ joining any two points $z$ and $w$ in $\Gamma$ satisfies that $l(z,w)\leq C(\Gamma)|z-w|$.
Hence $I_{\overline{\mathcal{T}}}$ attains a maximum at the point $N_{\mathcal{F}}$ in $\partial \mathcal{T}$ when $t\geq 2$.

 ($\mathbf{II}$)  From Theorem \ref{CM} and (\ref{est--0}), we know that $B_{b}(t)=|t|-1$ has an extremal function $\mathcal{G}:=-\frac{1}{2}[(1-z)^2-1]$ when $t\leq t_0$. Here $t_0$ is the same as in Theorem \ref{CM}.  We will show that $N_{\mathcal{G}}$ belongs to $\partial \mathcal{T}$. In fact, for $\gamma\in (0,1)$, let $g_{\gamma}(z):=[(1-z)^{1+\gamma}-1]/(-\gamma-1).$  We see that  $g_{\gamma}(\mathbb{T})$ is a quasicircle so that $N_{g_\gamma} \in \mathcal{T}$ for any
 $\gamma\in (0,1)$. On the other hand, it is easy to check that $\lim\limits_{\gamma\rightarrow 1^{-}} \|N_{g_{\gamma}}-N_{\mathcal{G}}\|_{E_1}=0.$ Thus, we see that
  $I_{\overline{\mathcal{T}}}$ attains a maximum at the point $N_{\mathcal{G}}$ on $\partial \mathcal{T}$ when $t\leq t_0$.  
  
However, we do not know
 \begin{question}\label{q-2}
Whether the IMS functional $I_{\overline{\mathcal{T}}}$ attains a maximum on $\overline{\mathcal{T}}$ for each $t\neq 0$?
  \end{question}
 \end{remark}
 
In view of the examples  ($\mathbf{I}$) and  ($\mathbf{II}$),  it is natural to raise the following 
\begin{conjecture}\label{con-1}
For each $t\in \mathbb{R}$, $B_{b}(t)$ has at least one extremal function whose Pre-Schwarzian derivative lies in $\partial \mathcal{T}$.
\end{conjecture}

\begin{remark}
From Proposition \ref{abers-2}, we can identify the universal asymptotic Teichm\"uller space $AT$ with $\widetilde{\mathbf{N}}_q$. Based on Conjecture \ref{con-1}, we propose the following   
\begin{conjecture}\label{con-2}
For each $t\in \mathbb{R}$, $B_{b}(t)$ has at least one extremal function $f$ such that $[N_f]_{E_1}$ lies in the boundary of the universal asymptotic Teichm\"uller space $\widetilde{\mathbf{N}}_q$ in $E_{1}/E_{1,0}$.
\end{conjecture}
\end{remark}

\begin{remark}We finally identify the universal Teichm\"uller space with $T_2=\mathbf{S}_q$. Hence we can see $\overline{T}_2=\overline{\mathbf{S}}_q$ as another model of the closure of the universal Teichm\"uller space. From \cite{AG2}, we know that the boundary $\partial\mathbf{S}_q$ of $\mathbf{S}_q$ is larger than that of $\mathbf{N}_q$.  If $\phi\in \mathbf{S}_q$, then there is a unique univalent function $f_{\phi}$ with $f_{\phi}\in \mathcal{S}_q^{\infty}$ and such that $\phi(z)=S_{f_{\phi}}(z).$  Hence, the IMS functional $I_{T_2}: \phi \mapsto \beta_{f_{\phi}}(t)$ is well-defined on $\mathbf{S}_q$.  From Proposition \ref{bers-2}, \ref{bers-1} and Theorem \ref{th-1}, we obtain that
\begin{corollary}
For each $t\in \mathbb{R}$, the IMS functional $I_{T_2}: \phi \mapsto \beta_{f_{\phi}}(t)$ is continuous on $\mathbf{S}_q$.
\end{corollary} 
 If $\phi\in \partial\mathbf{S}_q$, we know that there is a sequence $\{f_n(z)\}_{n=1}^{\infty}$, $f_{n}\in \mathcal{S}_q^{\infty}$ such that $\lim_{n\rightarrow\infty}\|S_{f_n}-\phi\|_{E_2}=0,$ and for each $z\in \Delta$, the sequence $\{f_n(z)\}_{n=1}^{\infty}$ converges. Then, take $f_\phi(z)=\lim_{n\rightarrow\infty}f_n(z), z\in \Delta$, we see that $f_{\phi}\in \mathcal{S}$ with $\phi(z)=S_{f_{\phi}}(z)$. In view of the normalization,  we believe that $f_{\phi}$ should be unique, here the statement  $f_{\phi}$ is unique means that, if there is another sequence $\{\widehat{f}_n(z)\}_{n=1}^{\infty}$, $\widehat{f}_{n}\in \mathcal{S}_q^{\infty}$ such that $\lim_{n\rightarrow\infty}\|S_{\widehat{f}_n}-\phi\|_{E_2}=0,$ and for each $z\in \Delta$, the sequence $\{\widehat{f}_n(z)\}_{n=1}^{\infty}$ converges, then, take $\widehat{f}_\phi(z)=\lim_{n\rightarrow\infty}\widehat{f}_n(z)$, we have $\widehat{f}_{\phi}(z)=f_{\phi}(z)$ for any $z\in \Delta$.  But we have not found a proof for this statement. We leave it as the following conjecture, which appears to be nontrivial.
 \begin{conjecture}\label{q-3}
 Let $\phi \in \overline{\mathbf{S}}_q$. Then $f_{\phi}$, taken as above, is unique so that the IMS functional $I_{\overline{T}_2}: \phi \mapsto \beta_{f_{\phi}}(t), \phi \in \overline{\mathbf{S}}_q$ is well-defined.
  \end{conjecture}
Furthermore, after resolving Conjecture \ref{q-3}, we can then consider the following
  \begin{question}\label{q-2}
Whether the IMS functional $I_{\overline{T}_2}: \phi \mapsto \beta_{f_{\phi}}(t)$ is continuous on $\overline{\mathbf{S}}_q$?
  \end{question}
 \end{remark} 
 
 \begin{spacing}{1.2} 

\end{spacing}
\end{document}